\documentclass[11pt,twoside,reqno]{amsart}
\usepackage{geometry}
\usepackage[latin1]{inputenc}
\usepackage[OT1]{fontenc}
\usepackage{amsmath}
\usepackage{amsthm}
\usepackage{amssymb}
\usepackage[all]{xy}
\usepackage{bbm}
\usepackage{amscd}
\usepackage{a4wide}
\usepackage{enumitem}

\setitemize{itemsep=0.1cm}

\usepackage{stmaryrd}
\usepackage{mathtools}

\setenumerate[0]{label=(\roman*)}

\DeclareMathOperator{\Spec}{\mathsf{Spec}}

\DeclareMathOperator{\pr}{\mathsf{pr}}

\DeclareMathOperator{\id}{\mathsf{id}}

\DeclareMathOperator{\Hom}{\mathsf{Hom}}

\DeclareMathOperator{\sHom}{\mathcal{H}\textit{om}}
\DeclareMathOperator{\Ext}{\mathsf{Ext}}

\DeclareMathOperator{\Coh}{\mathsf{Coh}}

\DeclareMathOperator{\Ho}{\mathsf H}

\DeclareMathOperator{\Jac}{\mathsf{Jac}}

\DeclareMathOperator{\codim}{codim}

\DeclareMathOperator{\rank}{\mathsf{rank}}

\DeclareMathOperator{\triv}{\mathsf{triv}}

\DeclareMathOperator{\im}{\mathsf{im}}

\DeclareMathOperator{\cone}{\mathsf{cone}}

\DeclareMathOperator{\Sym}{\mathsf{Sym}}

\let\codim\relax
\DeclareMathOperator{\codim}{\mathsf{codim}}

\let\det\relax
\DeclareMathOperator{\det}{\mathsf{det}}

\newcommand{\D}{{\mathsf D}}

\DeclareMathOperator{\Aut}{\mathsf{Aut}}

\DeclareMathOperator{\Pic}{\mathsf{Pic}}

\newcommand{\cH}{{\mathcal H}}

\newcommand{\cJ}{{\mathcal J}}

\newcommand{\cU}{{\mathcal U}}
\newcommand{\cW}{{\mathcal W}}

\newcommand{\pt}{\mathsf{pt}}

\newcommand{\kk}{\mathsf{k}}

\newcommand{\IP}{\mathbb{P}}

\newcommand{\IZ}{\mathbb{Z}}

\DeclareMathOperator{\VB}{\mathsf{VB}}
\DeclareMathOperator{\FM}{\mathsf{FM}}

\DeclareMathOperator{\AJ}{\mathsf{AJ}}
\DeclareMathOperator{\Cent}{\mathsf{Cent}}

\makeatletter
\newcommand{\leqnomode}{\tagsleft@true}
\newcommand{\reqnomode}{\tagsleft@false}
\makeatother

\let\dim\relax
\DeclareMathOperator{\dim}{\mathsf{dim}}

\let\min\relax
\DeclareMathOperator{\min}{\mathsf{min}}

\let\max\relax
\DeclareMathOperator{\max}{\mathsf{max}}

\let\hom\relax
\DeclareMathOperator{\hom}{\mathsf{hom}}

\newcommand{\sym}{\mathfrak S}

\newcommand{\cA}{\mathcal A}

\newcommand{\cF}{\mathcal F}
\newcommand{\cG}{\mathcal G}

\newcommand{\cE}{\mathcal E}

\newcommand{\cZ}{\mathcal Z}

\newcommand{\cN}{\mathcal N}
\newcommand{\cC}{\mathcal C}

\newcommand{\cP}{\mathcal P}
\newcommand{\cQ}{\mathcal Q}

\newcommand{\cR}{\mathcal R}

\newcommand{\alt}{\mathfrak a}

\newcommand{\reg}{\mathcal O}

\renewcommand{\theta}{\vartheta}
\renewcommand{\phi}{\varphi}
\renewcommand{\_}{\underline{\,\,\,\,}}

\usepackage{lscape}
\usepackage{adjustbox}

\usepackage[usenames,dvipsnames]{xcolor}
\usepackage{hyperref}
\hypersetup{
    colorlinks,
    linkcolor={red!50!black},
    citecolor={blue!50!black},
    urlcolor={blue!80!black}
}
\usepackage{aliascnt} 

\usepackage{tikz-cd}

\newtheorem{theorem}{Theorem}[section]

\newaliascnt{conjecture}{theorem}

\aliascntresetthe{conjecture}

  \newaliascnt{proposition}{theorem}
  \newtheorem{prop}[proposition]{Proposition}
  \aliascntresetthe{proposition}

  \newaliascnt{lemma}{theorem}
  \newtheorem{lemma}[lemma]{Lemma}
  \aliascntresetthe{lemma}

  \newaliascnt{corollary}{theorem}
  
  \aliascntresetthe{corollary}

\theoremstyle{definition}

  \newaliascnt{definition}{theorem}
  
  \aliascntresetthe{definition}

  \newaliascnt{remark}{theorem}
  \newtheorem{remark}[remark]{Remark}
  \aliascntresetthe{remark}

  \newaliascnt{condition}{theorem}
  
  \aliascntresetthe{condition}

\newaliascnt{convention}{theorem}
 
\aliascntresetthe{convention}

\newaliascnt{assumption}{theorem}
 
\aliascntresetthe{assumption}

  \newaliascnt{question}{theorem}
  
  \aliascntresetthe{question}

  \newaliascnt{example}{theorem}
  \newtheorem{example}[example]{Example}
  \aliascntresetthe{example}

\newcommand{\olZ}{\overline Z}

\definecolor{viridislightgreen}{RGB}{94, 201, 98}
\definecolor{viridisdarkgreen}{RGB}{33, 145, 140}
\definecolor{viridisblue}{RGB}{59, 82, 139}
\definecolor{viridisviolet}{RGB}{68, 1, 84}

\makeatletter
\newcommand{\semiboxtimes}{%
  \mathbin{\mathpalette\semiboxtimes@{\relax}}%
}

\newcommand{\semiboxtimes@}[2]{%
  \begin{tikzpicture}[baseline=(B.base)]
    \node[inner sep=0pt,outer sep=0pt] (B)
      {$\m@th#1\phantom{\boxtimes}$};

    \begin{scope}
      \clip (B.south west) rectangle (B.north);
      \node[inner sep=0pt,outer sep=0pt] at (B.center)
        {$\m@th#1\otimes$};
    \end{scope}

    \begin{scope}
      \clip (B.south) rectangle (B.north east);
      \node[inner sep=0pt,outer sep=0pt] at (B.center)
        {$\m@th#1\boxtimes$};
    \end{scope}
  \end{tikzpicture}%
}
\makeatother

\tikzcdset{scale cd/.style={every label/.append style={scale=#1},
    cells={nodes={scale=#1}}}}

\begin{document}

 \title{Embeddings of Derived Categories of Moduli Spaces into Symmetric Powers}
 \author[A.\ Krug]{Andreas Krug}
\address{
Institut f\"ur algebraische Geometrie,
Gottfried Wilhelm Leibniz Universit\"at Hannover,
Welfengarten 1,
30167 Hannover,
Germany
}
\email{krug@math.uni-hannover.de}

  \maketitle
\begin{abstract}
We give a criterion for the box product of a Fourier--Mukai kernel to induce a fully faithful functor to the derived category of a symmetric quotient stack. This leads to many new examples of embeddings of derived categories of moduli spaces. We study in more detail the case of the moduli space of stable bundles on a curve. Concretely, we identify pieces of the left-orthogonal complement of our embedding, and extend the known semi-orthogonal decomposition of the moduli space of bundles with rank 2 and fixed determinant to the moduli space of bundles with varying determinant.
\end{abstract}

\section{Introduction}

Fourier--Mukai transforms play an important role in algebraic geometry; see \cite{Huy} for an overview. They are functors between the derived categories of varieties which are induced by objects in the derived category of the product. Of particular interest are fully faithful Fourier--Mukai transforms. Their images in the target category are admissible subcategories, which can be completed to semi-orthogonal decompositions; see \cite{Kuz--ICM} for an overview.

In this paper, we provide a method to construct fully faithful functors to
equivariant derived categories $\D_{\sym_n}(X^n)$ where $\sym_n$ is the symmetric group, acting on the cartesian power $X^n$ of a smooth projective variety by permutation of the factors. There is an isomorphism $\D_{\sym_n}(X^n)\cong \Sym^n(\D(X))$, where the right-hand side is the symmetric power of the category $\D(X)$ in the sense of Ganter--Kapranov \cite{GKsym}, which explains the title of the present paper.

The construction is quite simple, but gives a new perspective on some already known fully faithful embeddings and, more importantly, gives interesting new examples. In almost all our examples, the source category is the derived category of some moduli space of sheaves on $X$. Furthermore, the kernel of the induced Fourier--Mukai transform allows the same moduli space to also be interpreted as parametrising some class of $\sym_n$-equivariant sheaves on $X^n$.

Let us explain the results in more technical detail. Let $M$ and $X$ be smooth projective varieties over an algebraically closed field of characteristic zero $\kk$. We denote the bounded derived category of coherent sheaves by $\D(X)\coloneqq\D^b(\Coh(X))$.

We fix some object $\cU\in \D(M\times X)$ on the product.
For $t\in M$, we write $\iota_t\colon X\hookrightarrow M\times X$ for the embedding $\iota_t(x)=(t,x)$ and consider the fibre
\[
 \cU_t:=\iota_t^* \cU \in \D(X)\,.
\]
The pull-back $\iota_t^*$ is implicitly meant to be the derived functor, as is our general convention; see \autoref{subsect:conventions}.
However, in most of our examples, $\cU$ is a universal family, hence in particular flat over $M$, so that $\iota_t^*$ does not need to be derived.
Note that $\cU_t$ agrees with the image of the skyscraper sheaf $\reg_t$ under the Fourier--Mukai transform along $\cU$.

Let $\sym_n$ act on $M\times X^n$ via permutation of the factors of $X^n$. We define an induced object $\cU^{\lhd n}$ in the equivariant derived category $\D_{\sym_n}(M\times X^n)$ as follows. Let $\pr_i\colon X^n\to X$ be the projection to the $i$-th factor and $p_i=\id_M\times \pr_i\colon M\times X^n\to M\times X$. Then we set
\[
 \cU^{\lhd n}:=\cU^{\boxtimes_M n}:=\bigotimes_{i=1}^n p_i^*\cU\,.
\]
We have $\cU^{\lhd n}\cong (\delta\times \id_{X^n})^*\cU^{\boxtimes n}$ where $\delta\colon M\hookrightarrow M^n$ is the embedding of the small diagonal, and $\cU^{\boxtimes n}$ is the usual box product on $(M\times X)^n\cong M^n\times X^n$.
We denote the associated equivariant Fourier--Mukai transform by
\[I_n\coloneqq \FM_{\cU^{\lhd n}}\colon \D(M)\to \D_{\sym_n}(X^n)\,;\]
see \autoref{subsect:equi} and \autoref{subsect:box} for details.
For $t\in M$, let $i_t\colon X^n\cong \{t\}\times X^n\hookrightarrow M\times X^n$ be the induced embedding. Writing $(\cU^{\lhd n })_t\coloneqq i_t^*(\cU^{\lhd n})$, we get an isomorphism
\begin{equation}\label{eq:fibreiso}
(\cU^{\lhd n})_t\cong (\cU_t)^{\boxtimes n}\,.
\end{equation}
Hence, $I_n(\reg_t)\cong (\cU_t)^{\boxtimes n}\in \D_{\sym_n}(X^n)$ and $\cU^{\lhd n}$ can be regarded as the family of box powers of the fibres of $\cU$.
Let $d:=\dim M$. We prove the following criterion for full faithfulness of $I_n$.

\begin{theorem}\label{thm:mainintro}
If the graded Hom spaces between the fibers of $\cU$ satisfy the conditions
\begin{equation}\label{eq:maincond}
 \Hom^*(\cU_s,\cU_t)\cong \begin{cases}
                          \kk[0]\oplus \kk^{\oplus d}[-1]\quad&\text{if $s=t$,} \\
                          \kk^{\oplus d-1}[-1]\quad&\text{if $s\neq t$,}
                          \end{cases}
\end{equation}
then $I_n\colon \D(M)\to \D_{\sym_n}(X^n)$ is fully faithful for every $n\ge d$.
\end{theorem}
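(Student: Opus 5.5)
We use the standard Bondal--Orlov type criterion for Fourier--Mukai transforms from a smooth projective variety. In the equivariant setting it reads as follows: $I_n=\FM_{\cU^{\lhd n}}$ is fully faithful as soon as, for all $s,t\in M$,
\[
\Hom^*_{\D_{\sym_n}(X^n)}\bigl(I_n(\reg_s),I_n(\reg_t)\bigr)\cong \begin{cases}\kk[0]\oplus(\text{terms in degrees }1,\dots,d) & s=t,\\ 0 & s\neq t.\end{cases}
\]
This follows either from the non-equivariant criterion applied to the composition with the forgetful functor, or by rerunning the Bondal--Orlov argument, which only uses the adjoints and the spanning class of skyscrapers. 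Strictly, the criterion needs $\Hom^0(I_n(\reg_t),I_n(\reg_t))=\kk$ and $\Hom^i(I_n(\reg_s),I_n(\reg_t))=0$ for $s\ne t$ or $i\notin[0,d]$. Since $I_n$ has kernel in $\D_{\sym_n}(M\times X^n)$ with $M$ smooth projective, its adjoints exist, so the criterion applies.

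By \eqref{eq:fibreiso} we have $I_n(\reg_t)\cong(\cU_t)^{\boxtimes n}$ with the natural $\sym_n$-linearisation. Equivariant Homs are invariants of the non-equivariant Homs, and the Künneth formula gives
\[
\Hom^*_{\sym_n}\bigl(\cU_s^{\boxtimes n},\cU_t^{\boxtimes n}\bigr)\cong \bigl(\Hom^*(\cU_s,\cU_t)^{\otimes n}\bigr)^{\sym_n}=\Sym^n\bigl(\Hom^*(\cU_s,\cU_t)\bigr),
\]
where $\Sym^n$ is the graded symmetric power, with Koszul signs coming from the permutation action. The main point to check carefully is that the $\sym_n$-action on the Künneth decomposition is exactly the super-symmetric one, including signs. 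This is standard for box powers of objects, and I would verify it by reducing to tensor products of complexes of vector spaces.

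Now plug in \eqref{eq:maincond}. Let $V=\Hom^*(\cU_s,\cU_t)$.

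For $s\ne t$ we have $V=W[-1]$ with $\dim W=d-1$ concentrated in odd degree. The graded symmetric power is then the exterior power, so $\Sym^n(V)\cong\Lambda^n W[-n]$. This vanishes exactly because $n>d-1$, that is, $n\ge d$. This is where the hypothesis on $n$ enters.

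For $s=t$ we have $V=\kk[0]\oplus W[-1]$ with $\dim W=d$. Then
\[
\Sym^n(V)\cong\bigoplus_{j=0}^{n}\Sym^{n-j}(\kk)\otimes\Lambda^jW[-j]\cong\bigoplus_{j=0}^{\min(n,d)}\Lambda^jW[-j].
\]
Its degree-$0$ part is $\kk$, and it is concentrated in degrees $0,\dots,d$. These are exactly the conditions of the criterion, so $I_n$ is fully faithful.

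The only genuinely delicate step is the equivariant version of Bondal--Orlov. If one wants to avoid reproving it, one can argue as follows. Denote by $\mathrm{Res}\colon\D_{\sym_n}(X^n)\to\D(X^n)$ the forgetful functor, with adjoint $\mathrm{Ind}$. The composite $\mathrm{Res}\circ I_n$ is a Fourier--Mukai transform $\D(M)\to\D(X^n)$ with kernel $\cU^{\lhd n}$. Full faithfulness of $I_n$ is then equivalent to the unit $\id\to I_n^R I_n$ being an isomorphism. The functor $I_n^RI_n$ is a Fourier--Mukai transform on $M$ whose kernel restricted to points is computed by the invariant Homs above. The usual argument, via the support of the kernel and the degree bounds, shows that this kernel is $\reg_\Delta$ and that the unit is an isomorphism.
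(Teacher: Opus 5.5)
Your proposal is correct and follows the paper's proof essentially verbatim: the paper also applies the equivariant (stacky) Bondal--Orlov criterion of Lim--Polishchuk to $I_n(\reg_t)\cong(\cU_t)^{\boxtimes n}$. It then computes $\Hom^*_{\sym_n}$ by the symmetric K\"unneth formula, obtaining $\bigoplus_{i\le \min(n,d)}\wedge^i(\kk^{\oplus d})[-i]$ for $s=t$ and $\wedge^n(\kk^{\oplus d-1})[-n]=0$ for $s\neq t$. One caveat: the equivariant criterion does not follow by applying the non-equivariant criterion to $\mathrm{Res}\circ I_n$, which is typically not fully faithful because $\Hom^*(\cU_s,\cU_t)^{\otimes n}\neq 0$ for $s\neq t$ once $d\ge 2$; rely instead on your second justification (rerunning Bondal--Orlov for the kernel functor $I_n^R I_n$ on $M$) or simply cite Lim--Polishchuk as the paper does.
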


The proof is a quite straightforward application of the Bondal--Orlov criterion for testing full faithfulness on skyscraper sheaves (see \autoref{lem:BO} for the precise version that we are using) and the symmetric K\"unneth formula \autoref{lem:symKun}. Actually, \eqref{eq:maincond} is also necessary for $I_n$ to be fully faithful; see \autoref{thm:exactcriterion} for the precise statement. The proof of the if-and-only-if criterion is a bit more difficult than that of \autoref{thm:mainintro}. We postpone it to the last section, because, in the author's opinion, the most interesting aspects are the many new examples of fully faithful functors arising from \autoref{thm:mainintro}, which we discuss before in \autoref{sect:mostexamples} and \autoref{sect:M}.

First examples where the conditions of \autoref{thm:mainintro} are satisfied are:

\begin{itemize}
 \item If $M=\pt$ is a point and $\cU\in \D(\pt\times X)\cong \D(X)$ is an exceptional object. In this case $\cU^{\lhd n}=\cU^{\boxtimes n}\in \D_{\sym_n}(X^n)$ is again an exceptional object; see \autoref{subsect:exceptional}.
 \item If $M=C=X$ is a curve and $\cU=\reg_\Delta$. In this case $I_n\colon D(C)\hookrightarrow D_{\sym_n}(C^n)$ is the equivariant push-forward along the embedding of the small diagonal;  see \autoref{prop:diagonalff}.
\end{itemize}
As explained in \autoref{subsect:exceptional} and \autoref{subsect:curvediag}, the fully faithful functors arising from these first two examples are already well-known, but maybe recovering them via \autoref{thm:mainintro} gives an interesting new perspective on them.

Anyway, to the best of the author's knowledge, the fully faithful functors arising from the following examples are new to the literature. The conditions of \autoref{thm:mainintro} are also satisfied in the following cases:

\begin{itemize}
 \item If $X=\IP(W)$ is a projective space, $M=|\reg_X(m)|=\IP(S^m W^\vee)$ for $0<m<\dim W$, and $\cU$ is the universal family of degree $m$ hypersurfaces; see \autoref{ex:hypersurfaces}. Actually, this is a special case of a more abstract construction that generates a family $\cU$ satisfying the condition of \autoref{thm:mainintro} out of any strong exceptional pair; see \autoref{prop:pair}.
\item If $M$ is a smooth projective connected component of the moduli space of stable sheaves on a surface $X$ with universal family $\cU$, the condition of \autoref{thm:mainintro} is satisfied if and only if \[\Hom(\cU_s,\cU_t\otimes \omega_X)=0 \quad\text{for all $s,t\in M$;}\] see \autoref{prop:surfcondition}. This condition is always met
if $X$ is a del Pezzo surface, and if $X$ is an Enriques surface it is satisfied for moduli spaces of stable sheaves of odd rank with fixed determinant; see \autoref{subsect:surface}. Note that, for $X$ a surface, there is the derived McKay correspondence $\D_{\sym_n}(X^n)\cong \D(X^{[n]})$ where $X^{[n]}$ is the Hilbert scheme of points on $X$. Hence, in this case, we not only get an embedding of $\D(M)$ into the derived category of a stack, but also into the derived category of a variety.
 \item If $X=C$ is a curve, $M=M_C(r,e)$ with $r$ and $e$ coprime is the moduli space of stable bundles on $C$ of rank $r$ and degree $e$, and $\cU$ is the universal family; see \autoref{prop:modulicurves}.
 \end{itemize}
In \autoref{sect:M}, we further study the fully faithful functor $I_n\colon \D(M)\hookrightarrow \D_{\sym_n}(C^n)$ resulting from this last example and some related semi-orthogonal decompositions. By Polishchuk and van den Bergh \cite{PvdB}, there is a semi-orthogonal decomposition of $\D_{\sym_n}(C^n)$ whose pieces are indexed by the conjugacy classes of $\sym_n$ and which lift the canonical direct sum decomposition of orbifold cohomology. We prove that many of the pieces of this semi-orthogonal decomposition lie in the left-orthogonal complement of $I_n(\D(M))$; see \autoref{prop:complement}.

By \cite{TT} and \cite{Tevelev--nophantom}, there is a semiorthogonal decomposition of the derived category of the moduli space $M_C(2,\Lambda)$ of stable bundles of rank $2$ and fixed determinant $\Lambda$. We prove that there is an extension of this to a semi-orthogonal decomposition of $\D(M_C(2,e))$; see \autoref{thm:Msod}.
We then do some speculation on whether one might give an alternative construction of these semi-orthogonal decompositions of $\D(M_C(2,e))$ and $\D(M_C(2,\Lambda))$ using the fully faithful functor $I_n$ and the semiorthogonal decomposition of Polishchuk and van den Bergh of $\D_{\sym_n}(C^n)$, and possibly also obtain results regarding $\D(M_C(r,e))$ for higher $r$ in this way.

Here is how this paper is organised. In the preliminary \autoref{sect:prelim}, we fix our general notation, recall a few facts on equivariant sheaves and their derived categories, and introduce the relative box products, whose Fourier--Mukai transforms we study in this paper.
In the short \autoref{sect:sufficient}, we prove \autoref{thm:mainintro}.

In \autoref{sect:mostexamples}, we discuss the first four of the example classes listed above, one in every subsection. In \autoref{sect:M}, we discuss the case that $M$ is a moduli space of stable bundles on a curve. More precisely, in \autoref{subsect:Mholds}, we prove that the universal family fulfils the assumption of \autoref{thm:mainintro}, which gives the fully faithful embedding $I_n\colon \D(M)\hookrightarrow\D_{\sym_n}(C^n)$. In \autoref{subsect:PvdB}, we recall the semiorthogonal decomposition of $\D_{\sym_n}(C^n)$ constructed in \cite{PvdB} and prove that many of its pieces lie in the left-orthogonal complement of $I_n(\D(M))$. In \autoref{subsect:Lambdasod} we recall the semiorthogonal decomposition of \cite{TT} and \cite{Tevelev--nophantom} of the moduli space of bundles on $C$ with fixed determinant. \autoref{subsect:relativeFM} might be of independent interest. We first recall some basics on relative Fourier--Mukai transforms, and then provide a criterion \autoref{thm:relFMsod} which allows to test a relative semiorthogonal decomposition fibrewise. This might be well-known to experts, and there are closely related results in the literature, but we could not find the precise statement of \autoref{thm:relFMsod}(ii).
We then apply this criterion to extend the semiorthogonal decomposition of \cite{TT} and \cite{Tevelev--nophantom} from the moduli of bundles with fixed determinant to the whole $M$ in \autoref{subsect:extendsod}. Some speculation on connections of the various semiorthogonal decompositions is done in \autoref{subsect:possibleconnection}.

In the final \autoref{sect:necessary}, we strengthen \autoref{thm:mainintro} by showing that \eqref{eq:maincond} is not only sufficient, but also necessary for full faithfulness of $I_n=\FM_{\cU^{\lhd n}}$; see \autoref{thm:exactcriterion} for the precise statement.

\subsection*{Acknowledgements}

The studies carried out in this paper were inspired by the $\ell=1$ case of
\autoref{ex:hypersurfaces}. More concretely, this is the case where $X=\IP^m$ is a projective space, $M=(\IP^m)^*$ is the dual projective space, and $\cU=\reg_\cH$ is the structure sheaf of the universal hyperplane section. In this case, the full faithfulness of $I_n\colon \D((\IP^m)^*)\to \D_{\sym_n}((\IP^m)^n)$ for $n\ge m$ was already found some time ago (probably in 2013) by the author together with Evgeny Shinder.

\subsection*{AI Disclosure}

Most results of this paper were found in ``discussions'' with LLMs. The writing was done by the author without assistance, with an LLM only used for proofreading.

\section{Preliminaries}\label{sect:prelim}

\subsection{General Conventions}\label{subsect:conventions}

Throughout this paper, $\kk$ is an algebraically closed field of characteristic zero. We only drop the assumption on the characteristic of $\kk$ in \autoref{subsect:relativeFM}. It is not necessary there, and the criterion might be of independent interest, so we decided to state it in the bigger generality.

Throughout, $X$ and $M$ will be smooth projective varieties over $\kk$.
In this paper, with the exception of \autoref{lem:componentvanish}, varieties are assumed to be connected. If we write $x\in X$, we mean that $x$ is a closed point of $X$. We write $\pt:=\Spec \kk$.

By the \emph{derived category of $X$}, we mean the bounded derived category of coherent sheaves $\D(X):=\D^b(\Coh(X))$.
For two objects $A,B\in \D(X)$, we consider the graded vector space
\begin{equation}\label{eq:gradedHom}
\Hom^*(A,B):=\bigoplus_{i\in \IZ} \Hom_{\D(X)}\bigl(A, B[i]\bigr)[-i]\,.
\end{equation}
If $A, B$ are sheaves concentrated in degree zero, we have $\Hom^*(A,B)\cong\Ext^*(A,B)$.

All geometric functors occurring in this paper are implicitly meant to be derived, though we will not make this explicit in the notation. For example, given a morphism $f\colon X\to Y$ between smooth projective varieties, when we write $f^*$, we mean the derived pull-back $Lf^*\colon \D(Y)\to \D(X)$, and when we write $f_*$, we mean the derived push-forward $Rf_*\colon \D(X)\to \D(Y)$. The sole exception is \autoref{lem:componentvanish} where a restriction to a subvariety is taken in the underived sense.

\subsection{Equivariant Sheaves}\label{subsect:equi}

For details on equivariant sheaves and their derived categories, we refer to \cite{BOber--equi}, \cite[Sect.\ 2.2]{Krug--remarksMcKay}, \cite[Sect.\ 4]{BKR}. In this subsection, we only collect a few facts needed later.

Let $G$ be a finite group acting on a variety $Y$. We denote the abelian category of $G$-equivariant coherent sheaves on $Y$ by $\Coh_G(Y)$.
Its objects are pairs $(E,\lambda)$ with $E\in \Coh(Y)$ and $\lambda$ a \emph{$G$-linearisation}, i.e.\ a family of isomorphisms $\lambda=(\lambda_g\colon E\xrightarrow\cong g^*E)_{g\in G}$ satisfying
\[
\begin{tikzcd}
E \arrow[rrr, bend right=18, "\lambda_{hg}" ]      \arrow[r, , "\lambda_{g}"]         & g^*E    \arrow[r, "g^*\lambda_{h}"]           & g^*h^*E \arrow[r, "\cong"] & (hg)^*E \,.  \\
\end{tikzcd}
\]
We will often omit $\lambda$ in the notation and simply write $E$ for the equivariant sheaf.
For two $G$-equivariant sheaves $E$ and $F$, there is an induced $G$-action on $\Hom(E,F)$, given by conjugation by the $G$-linearisations.
The morphisms in $\Coh_G(Y)$ are given by
\[
 \Hom_G(E,F):=\Hom(E,F)^G\,.
\]
The bounded derived category of $\Coh_G(Y)$ is denoted by $\D_G(Y):=\D^b\bigl(\Coh_G(Y)\bigr)$. Similarly to our notation \eqref{eq:gradedHom} for non-equivariant derived categories, we write the graded Hom-space between two objects $A, B\in \D_G(Y)$ by
\[
 \Hom_G^*(A,B):=\bigoplus_{i\in \IZ} \Hom_{\D_G(Y)}\bigl(A, B[i]\bigr)[-i]\,.
\]
The $G$-linearisations of $A$ and $B$ induce a $G$-action on the graded-Hom space $\Hom^*(A,B)$ between the underlying objects in $\D(Y)$, and $\Hom^*_G(A,B)$ consists of the invariants under this action:
\begin{equation}\label{eq:HomG}
\Hom_G^*(A,B)\cong\bigl( \Hom^*(A,B)\bigr)^G\,.
\end{equation}
If $G$  acts trivially on $Y$, a linearisation of a coherent sheaf $E\in \Coh(Y)$ is the same as a $G$-action, i.e.\ a group homomorphism $G\to \Aut(E)$. In this case, there is the functor $\triv\colon \Coh(Y)\to \Coh_G(Y)$ which equips every sheaf with the trivial $G$-action. It has the functor $(\_)^G\colon \Coh_G(Y)\to \Coh(Y)$, which sends any sheaf $F$ equipped with a $G$-action to the sheaf of $G$-invariants $F^G$, as its both-sided adjoint.

Given a $G$-equivariant morphism $f\colon X\to Y$ between two smooth projective varieties with $G$-actions, there is an equivariant push-forward $f_*\colon \Coh_G(X)\to \Coh_G(Y)$ and an equivariant pull-back $f^*\colon \Coh_G(Y)\to \Coh_G(X)$, which are compatible with the non-equivariant push-forward and pull-back, respectively. There is also a tensor product of equivariant sheaves, and all these functors can be derived to give functors on the level of the equivariant derived categories.

Accordingly, there is also the notion of an equivariant Fourier--Mukai transform; see e.g.\ \cite[Sect.\ 1]{Plo}, \cite[Sect.\ 2.1]{Krug--NakaP}. We will only need the special case where one of the two varieties is equipped with a trivial group action. So, let $X$ and $Y$ be smooth projective varieties, and let a finite group $G$ act on $Y$. Then, there is an induced $G$ action on $X\times Y$, given by $g(x,y)=(x,gy)$. For $\cP\in \D_G(X \times Y)$, the induced \emph{equivariant Fourier--Mukai transform} $\FM_{\cP}\colon \D(X)\to \D_G(Y)$ is the composition
\[
 \D(X)\xrightarrow{\triv} \D_G(X) \xrightarrow{\pr_{X}^*} \D_G(X\times Y)\xrightarrow{(\_)\otimes \cP} \D_G(X\times Y) \xrightarrow{\pr_{Y*}} \D_G(Y)\,.
\]
We will need the following in the proof of \autoref{prop:complement}.

\begin{lemma}\label{lem:componentvanish}
 Let $Z$ be a, not necessarily connected, variety on which a finite group $G$ acts. Let $Y\subset Z$ be a subvariety with $G\cdot Y=Z$, and let $f\colon Z\to B$ be an affine $G$-invariant morphism. Let $W\subset G$ be a subgroup which acts trivially on $Y$, which means $w\cdot y=y$ for every $w\in W$ and $y\in Y$. Then, for every torsion-free $G$-equivariant sheaf $E\in \Coh_G(Z)$ we have
 \[
(E_{\mid Y})^W\cong 0\quad\implies \quad (f_*E)^G\cong 0\,.
 \]
\end{lemma}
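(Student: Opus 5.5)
Since $f$ is affine and $G$-invariant, $f_*E$ is a $G$-equivariant sheaf on $B$ (with $G$ acting trivially on $B$), and $(f_*E)^G$ is a subsheaf of $f_*E$. The question is local on $B$. So I will take an affine open $V=\Spec A\subset B$; its preimage $f^{-1}(V)=\Spec R$ is affine and $G$-stable. It then suffices to show that every $G$-invariant section $\sigma\in\Gamma(\Spec R, E)^G$ vanishes.

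The first step is to restrict $\sigma$ to $Y\cap f^{-1}(V)$. Since $W$ acts trivially on $Y$, the restriction $E_{\mid Y}$ (underived, as the paper's conventions allow here) carries a $W$-action by sheaf automorphisms. Because $\sigma$ is $G$-invariant, in particular $W$-invariant, its restriction $\sigma_{\mid Y}$ is a $W$-invariant section of $E_{\mid Y}$. Taking $W$-invariants commutes with taking sections, so $\sigma_{\mid Y}$ is a section of $(E_{\mid Y})^W\cong 0$. Hence $\sigma_{\mid Y}=0$.

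The second step transports this vanishing along $G$. For $g\in G$, the invariance $\lambda_g(\sigma)=g^*\sigma$ gives that $\sigma_{\mid gY}$ corresponds under $g$ to $\sigma_{\mid Y}$. Therefore $\sigma$ vanishes in the fibres $E(z)=E_z\otimes k(z)$ for all $z\in\bigcup_g gY=Z$. I have to be careful here, because restriction to a subvariety is tensoring with $\reg_Y$, and $\sigma_{\mid Y}=0$ a priori only means that $\sigma$ lies in $I_Y E$. The precise statement I will use is therefore: $\sigma$ maps to zero in $E\otimes\reg_{gY}$ for every $g$, and in particular in every fibre $E(z)$, $z\in Z$.

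The third step, which I expect to be the main obstacle, deduces $\sigma=0$ from fibrewise vanishing, and this is where torsion-freeness is used. At a generic point $\eta$ of a component of $Z$, which I take to be reduced (a variety), the stalk $E_\eta$ is a vector space over the field $k(\eta)$, so vanishing in $E(\eta)=E_\eta$ forces $\sigma_\eta=0$. To make this rigorous, I will first pass to a dense open subset of each component on which $E$ is locally free. Such a subset exists because $E$ is coherent on a reduced scheme. There, vanishing in all closed fibres forces $\sigma=0$ by Nakayama applied to the trivialised sheaf, since a section of $\reg^r$ vanishing at all closed points of a reduced scheme is zero. Then $\sigma$ is supported on a proper closed subset, so it is a torsion section. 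Since $E$ is torsion-free, $\sigma=0$.

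Finally, because this holds on every affine $V$, we get $(f_*E)^G\cong 0$. The subtle points to check are that $Y$ meets every component densely enough. Since $G\cdot Y=Z$ and $Y$ is a subvariety, $Y$ must contain a whole component up to translates, so this is automatic. I also need to check that restriction commutes with $W$-invariants, which holds because we are in characteristic zero and invariants are exact.
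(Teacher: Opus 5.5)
Your proof is correct, and it is more self-contained than the paper's. You start with the same reduction as the paper: because $f$ is affine, $f_*E$ is an honest sheaf, and $(f_*E)^G$ vanishes if and only if $\Gamma(f^{-1}V,E)^G=0$ for all affine opens $V\subset B$. The paper then simply invokes \cite[Lem.\ 2.1]{Kru--Gm} for this vanishing, whereas you prove it directly:
\begin{itemize}
\item a $G$-invariant section $\sigma$ restricts to a $W$-invariant section of $E_{\mid Y}$, hence to a section of $(E_{\mid Y})^W=0$;
\item translating by $G$ gives vanishing of $\sigma$ in $E\otimes\reg_{gY}$ for all $g$;
\item since the $gY$ cover the reduced scheme $Z$, $\sigma$ vanishes generically, so it is a torsion section and therefore zero.
\end{itemize}

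The citation keeps the paper short. Your argument makes transparent which hypotheses are actually used. Affineness of $f$ only serves to identify the derived push-forward with the underived one, and characteristic zero plays no role. Your closing remark about exactness of invariants is unnecessary: your first step only uses that $W$-invariant sections of $E_{\mid Y}$ are sections of $(E_{\mid Y})^W$, which is left exactness.

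Two small simplifications are possible.
\begin{itemize}
\item The fact that a section of $\reg^{\oplus r}$ vanishing at all closed points of a reduced scheme of finite type is zero is the Nullstellensatz, not Nakayama.
\item You can bypass the locally free locus entirely. Every generic point $\eta$ of $Z$ lies in some $gY$, and $\reg_{Z,\eta}$ is a field, so the stalk of $E\otimes\reg_{gY}$ at $\eta$ is just $E_\eta$. This gives $\sigma_\eta=0$ directly, and torsion-freeness finishes the argument.
\end{itemize}
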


Here, for once, the restriction $(\_)_{\mid Y}$ denotes the underived pull-back to $Y$.

\begin{proof}
 Since $f$ is affine, the asserted vanishing $(f_*E)^G\cong 0$ is equivalent to $\Gamma(f^{-1}U, E)^G=0$ for every affine open $U\subset B$. This vanishing, under the given assumptions, is \cite[Lem.\ 2.1]{Kru--Gm}.
\end{proof}

\subsection{Box Products}\label{subsect:box}
Given a smooth projective variety $X$, there is a canonical action of the symmetric group $\sym_n$ on $X^n$ by permutation of the factors.

Let $\pr_i\colon X^n\to X$ be the projection to the $i$-th factor. For $A\in \D(X)$, the box product
\[
 A^{\boxtimes n}:=\pr_1^*A\otimes \dots \otimes \pr_n^*A
\]
carries a canonical $\sym_n$-linearisation given by permutation of the factors of the tensor product. When we write $A^{\boxtimes n}\in \D_{\sym_n}(X^n)$, we mean the box product equipped with this canonical linearisation.

From the non-equivariant K\"unneth isomorphism together with \eqref{eq:HomG}, we get

\begin{lemma}[Symmetric K\"unneth formula]\label{lem:symKun}
For every $A,B\in \D(X)$, we have
\[
\Hom_{\sym_n}^*(A^{\boxtimes n}, B^{\boxtimes n})\cong S^n\Hom^*(A,B)\,.
 \]
\end{lemma}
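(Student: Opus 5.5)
The plan is to deduce the statement from the two facts used throughout: the non-equivariant K\"unneth isomorphism on $X^n$, and the description \eqref{eq:HomG} of equivariant morphisms as invariants.

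First we forget the linearisations. Since $\pr_i^*$ is exact and $X$ is over a field, the K\"unneth formula gives an isomorphism of graded vector spaces
\[
\Hom^*(A^{\boxtimes n},B^{\boxtimes n})\cong \Hom^*(A,B)^{\otimes n}.
\]
Concretely, $f_1\otimes\dots\otimes f_n$ is sent to $\pr_1^*f_1\otimes\dots\otimes\pr_n^*f_n$. This uses $A^{\boxtimes n}$ as the external product $A\boxtimes\dots\boxtimes A$ and the adjunction/K\"unneth isomorphism $\Hom^*_{X\times Y}(A\boxtimes A', B\boxtimes B')\cong\Hom^*(A,B)\otimes\Hom^*(A',B')$ for smooth projective varieties, iterated $n$ times.

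Next we identify the $\sym_n$-action on the left induced by the canonical linearisations. For $\sigma\in\sym_n$, the linearisation of $A^{\boxtimes n}$ identifies $\sigma^*(\pr_1^*A\otimes\dots\otimes\pr_n^*A)$ with the tensor product in permuted order. Here we use the symmetric monoidal structure of the derived tensor product, which includes the Koszul signs for shifted objects. Conjugating $\pr_1^*f_1\otimes\dots\otimes\pr_n^*f_n$ by these isomorphisms yields $\pr_1^*f_{\sigma(1)}\otimes\dots\otimes \pr_n^*f_{\sigma(n)}$, up to $\sigma\leftrightarrow\sigma^{-1}$ conventions. It also carries the Koszul sign $\pm$ determined by the degrees of the $f_i$. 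So under K\"unneth the action is the standard signed permutation action on the graded tensor power $\Hom^*(A,B)^{\otimes n}$.

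Finally, \eqref{eq:HomG} gives $\Hom^*_{\sym_n}(A^{\boxtimes n},B^{\boxtimes n})\cong(\Hom^*(A,B)^{\otimes n})^{\sym_n}$. In characteristic zero, the invariants of the signed permutation action are identified, via the symmetrisation projector $\frac1{n!}\sum_\sigma\sigma$, with the coinvariants. These coinvariants are by definition the graded symmetric power $S^n\Hom^*(A,B)$, meaning symmetric in even and exterior in odd degree.

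The main obstacle is the middle step: checking that the signs from permuting shifted tensor factors agree with the Koszul signs of the graded symmetric power. One must also check that the K\"unneth map is $\sym_n$-equivariant at the derived level and not merely on cohomology sheaves. I would handle this by replacing $A$ and $B$ with bounded complexes of locally free sheaves, or by computing via $\Hom^*(A,B)\cong H^*(X,A^\vee\otimes B)$. Then everything becomes a statement about tensor products of complexes, where the sign rule is explicit. This also reduces the claim to the known $\sym_n$-equivariant K\"unneth isomorphism $H^*(X^n,E^{\boxtimes n})\cong H^*(X,E)^{\otimes n}$ with $E=A^\vee\otimes B$.
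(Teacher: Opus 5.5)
Your proposal is correct and is essentially the paper's argument: the paper obtains the lemma by combining the non-equivariant K\"unneth isomorphism with the description \eqref{eq:HomG} of equivariant graded Homs as $\sym_n$-invariants. Your extra steps (the Koszul signs of the induced permutation action, and identifying invariants with the graded symmetric power in characteristic zero as in \autoref{rem:graded}) fill in details the paper leaves implicit.
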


\begin{remark}\label{rem:graded}
 The symmetric product of $V=\Hom^*(A,B)$ is taken in the graded sense. This means that, if we decompose
$V=V^{\mathsf{even}}\oplus V^{\mathsf{odd}}$ into graded vector spaces concentrated in even and odd degrees, we have
\[
 S^n V\cong \bigoplus_{i=0}^n S^{n-i} V^{\mathsf{even}}\otimes S^{i} V^{\mathsf{odd}}
\]
where the underlying ungraded vector space of $S^{n-i} V^{\mathsf{even}}$ is really the symmetric power while the underlying ungraded vector space of
$S^{i} V^{\mathsf{odd}}$ is the $i$-th wedge power of the underlying ungraded vector space of $V^{\mathsf{odd}}$. In other words,
\[
 S^{i} V^{\mathsf{odd}}\cong \bigr(\wedge^i(V^{\mathsf{odd}}[1])\bigl)[-i]\,.
\]
\end{remark}
Let now $\cU\in \D(M\times X)$ and let us write $p_i:=(\id_M\times \pr_i)\colon M\times X^n\to M\times X$. Then, the \emph{relative box product over $M$}
\[
 \cU^{\boxtimes_M n}:=p_1^*\cU\otimes \dots\otimes p_n^*\cU
\]
has again a canonical linearisation, and we write the object equipped with this linearisation as $\cU^{\lhd n}:= \cU^{\boxtimes_M n}\in \D_{\sym_n}(M\times X^n)$. The induced equivariant Fourier--Mukai transform is written
\[
I_n:=\FM_{\cU^{\lhd n}}\colon \D(M)\to \D_{\sym_n}(X^n)\,.
\]

\begin{remark}\label{rem:Idelta}
Let $\delta_n\colon M\hookrightarrow M^n$ be the embedding of the small diagonal and consider the induced embedding
\[
 \delta_n\times \id_{X^n}\colon M\times X^n\hookrightarrow M^n\times X^n\cong (M\times X)^n\,.
\]
Then $\cU^{\lhd n}\cong (\delta_n\times \id_{X^n})^*\cU^{\boxtimes n}$.
By the equivariant analogue \cite[Lem.\ 2.7]{KSequi} of \cite[Prop.\ 2.1.6]{Orlsurvey}, we get the isomorphism of functors
\[I_n\cong \FM_{\cU^{\boxtimes n}}\circ \delta_{n*}\circ \triv\,.\]
\end{remark}

\subsection{Continuity of Fibres}

The following is surely well-known, but we could not find a precise reference for objects in the derived category instead of for sheaves.

\begin{lemma}\label{lem:Eulerconst}
Let $Y$ be a smooth projective variety and $\cA\in \D(Y)$.
For a closed point $y\in Y$, write $i_y\colon \pt\hookrightarrow Y$ for its embedding.
\begin{enumerate}
 \item There exist integers $a\le b$ such that $\Ho^i(i_y^*\cA)=0$ for every $i\notin[a,b]$ and every $y\in Y$.
\item\label{Euler} The Euler characteristic $\chi(i_y^*\cA)$ is the same for every closed point $y\in Y$.
 \end{enumerate}
\end{lemma}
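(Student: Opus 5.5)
The plan is to represent $\cA$ by a bounded complex of locally free sheaves and read off both statements from it. Since $Y$ is smooth and projective, every object of $\D(Y)$ is isomorphic to a bounded complex
\[
E^\bullet=\bigl(E^a\to E^{a+1}\to\dots\to E^b\bigr)
\]
of locally free sheaves of finite rank. First I will construct it via a locally free resolution of each cohomology sheaf. Concretely, I would resolve $\cA$ by a bounded above complex of locally free sheaves, using sufficiently negative twists of an ample line bundle. Then I truncate it at degree $-\dim Y$ below the lowest nonvanishing cohomology. By Hilbert's syzygy theorem, in its local form (finite global dimension of regular local rings), the kernel appearing there is locally free.

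Since the terms are locally free, the derived pull-back $i_y^*\cA$ is computed termwise. It is the complex of vector spaces $E^\bullet(y)=E^\bullet\otimes \kk(y)$, with $E^i(y)$ of dimension $\rank E^i$. Part (i) then follows at once: $E^i(y)=0$ for $i\notin[a,b]$, independently of $y$, so $\Ho^i(i_y^*\cA)=0$ outside $[a,b]$.

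For part (ii), the Euler characteristic of a bounded complex of finite-dimensional vector spaces equals the alternating sum of the dimensions of its terms. Hence
\[
\chi(i_y^*\cA)=\sum_i(-1)^i\dim E^i(y)=\sum_i(-1)^i\rank E^i .
\]
The rank of a locally free sheaf is constant on the connected variety $Y$; recall the standing convention that varieties are connected. So the sum does not depend on $y$.

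The only step needing care is the existence of the finite locally free resolution, specifically the claim that the truncated kernel is locally free. I would handle it by checking stalks. At each point, the local ring is regular of dimension at most $\dim Y$, so a module sitting at the end of a length-$\dim Y$ syzygy sequence of free modules is projective, hence free. Alternatively, one can simply cite the standard fact that $\D^b(\Coh(Y))\simeq$ the category of perfect complexes for smooth $Y$.
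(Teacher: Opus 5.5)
Your proposal is correct and follows essentially the same route as the paper: represent $\cA$ by a bounded complex of vector bundles, compute $i_y^*$ termwise, and read off both the degree bounds and the alternating sum of ranks. The only difference is that you construct a single global locally free resolution (via an ample line bundle and the syzygy bound from regularity). The paper instead uses only that $\cA$ is perfect, i.e.\ locally quasi-isomorphic to such a complex, and then passes from local to global by quasi-compactness for (i) and connectedness for (ii).
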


Recall that $i_y^*$ denotes the derived pull-back. This is necessary for \ref{Euler} to hold without any flatness assumption on $\cA$.

\begin{proof}
Every object in $\D(Y)=\D^b(\Coh(Y))$ is isomorphic to a perfect complex; see e.g.\ \cite[Sect.\ 3.1]{BvdB}. Hence, locally $\cA$ is isomorphic to a bounded complex of vector bundles. Let us fix some non-empty open subset $W\subset Y$ and some complex
\[
 0\to E^a\to\dots\to E^b\to 0
\]
of vector bundles on $W$ which is quasi-isomorphic to $\cA_{\mid W}$.
Then, for every $y\in W$, we have
\[
 i_y^*\cA\cong \bigl(0\to E^a(y)\to\dots\to E^b(y)\to 0  \bigr)
\]
This implies the statement of (i) for every $y\in W$. The global statement follows by quasi-compactness.

Furthermore, $\chi(i_y^*\cA)=\sum_{i=a}^b(-1)^i\rank(E^i)$ is independent of $y\in W$. By connectedness of $Y$, we get that $\chi(i_y^*\cA)$ is the same for every $y\in Y$.
\end{proof}

\section{Proof of \autoref{thm:mainintro}}\label{sect:sufficient}

In this section, we prove \autoref{thm:mainintro} as an easy application of the stacky version \cite{Lim-Poli--BO} of the Bondal--Orlov criterion \cite{BO--ffcrit}.

Let $\cU\in \D(M\times X)$ be an object satisfying for $s,t\in M$
\begin{equation}\label{eq:proofcond}
 \Hom^*(\cU_s,\cU_t)\cong\begin{cases}  \kk[0]\oplus \kk^{\oplus d}[-1]\quad&\text{if $s=t$,} \\
                          \kk^{\oplus d-1}[-1]\quad&\text{if $s\neq t$.}
\end{cases}
                          \end{equation}
In order to lighten the notation, we set $\cW:=\cU^{\lhd n}$ which turns \eqref{eq:fibreiso} into
\begin{equation}\label{eq:boxfibre}
 \cW_t\cong (\cU_t)^{\boxtimes n}
\end{equation}
as objects in $\D_{\sym_n}(X^n)$ for every $t\in M$. With $I_n:=\FM_{\cW}$, hence $I_n(\reg_t)\cong \cW_t$, the criterion
\cite[Thm.\ 1.1]{Lim-Poli--BO} can be phrased as
\begin{lemma}\label{lem:BO}
$I_n$ is fully faithful if and only if the following conditions all hold:
\begin{enumerate}
 \item\label{cond:selfHom} For every $t\in M$ we have $\Hom^0_{\sym_n}(\cW_t, \cW_t)\cong \kk$ and $\Hom^i_{\sym_n}(\cW_t, \cW_t)\cong 0$ for all $i\notin[0,d]$.
 \item\label{cond:ortho} For every $s,t\in M$ with $s\neq t$, we have $\Hom^*_{\sym_n}(\cW_s, \cW_t)\cong 0$.
\end{enumerate}
\end{lemma}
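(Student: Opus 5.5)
The plan is to deduce \autoref{lem:BO} directly from the stacky Bondal--Orlov criterion \cite[Thm.\ 1.1]{Lim-Poli--BO}, after translating between the equivariant language used here and the stack language used there. The first step is to set up the dictionary. The equivariant derived category $\D_{\sym_n}(X^n)$ is equivalent to $\D([X^n/\sym_n])$, the derived category of the smooth projective Deligne--Mumford quotient stack. Under this equivalence, $\Hom$-spaces in $\D([X^n/\sym_n])$ are the $\sym_n$-invariant $\Hom$-spaces $\Hom_{\sym_n}^*$ of \eqref{eq:HomG}. Similarly, $\D_{\sym_n}(M\times X^n)\cong \D(M\times[X^n/\sym_n])$, since $\sym_n$ acts trivially on $M$. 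The equivariant Fourier--Mukai transform $\FM_{\cW}$, defined as the composite $\pr_{X^n*}\circ(\_\otimes\cW)\circ\pr_M^*\circ\triv$, then corresponds to the usual Fourier--Mukai transform between $M$ and the stack with kernel $\cW$. So $I_n$ is a functor of exactly the type treated in \cite{Lim-Poli--BO}: its source is the derived category of a smooth projective variety of dimension $d$, and its target is the derived category of a smooth projective DM stack, with a kernel on the product.

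The second step is to identify the images of skyscraper sheaves, i.e.\ to verify $I_n(\reg_t)\cong\cW_t$ equivariantly. Write $i_t\colon X^n\hookrightarrow M\times X^n$ for the inclusion of the fibre over $t$. Then $\pr_M^*\reg_t\cong i_{t*}\reg_{X^n}$ with trivial linearisation, and the equivariant projection formula gives $i_{t*}\reg_{X^n}\otimes\cW\cong i_{t*}i_t^*\cW$. Since $\pr_{X^n}\circ i_t=\id_{X^n}$ is $\sym_n$-equivariant, we get $I_n(\reg_t)\cong i_t^*\cW=\cW_t$. Here $i_t^*$ is the derived pull-back, as in our conventions. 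No flatness of $\cW$ over $M$ is needed.

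The third step is to substitute into the criterion. The criterion states that a Fourier--Mukai functor $\Phi$ is fully faithful if and only if two conditions hold. First, $\Hom^0(\Phi(\reg_t),\Phi(\reg_t))=\kk$ and $\Hom^i(\Phi(\reg_t),\Phi(\reg_t))=0$ for $i\notin[0,\dim M]$. Second, $\Hom^*(\Phi(\reg_s),\Phi(\reg_t))=0$ for $s\neq t$. Replacing $\Phi(\reg_t)$ by $\cW_t$ and the $\Hom$-spaces by $\Hom^*_{\sym_n}$ yields exactly conditions \ref{cond:selfHom} and \ref{cond:ortho}. The necessity direction is automatic: $\Hom^*_{\D(M)}(\reg_s,\reg_t)$ is zero for $s\neq t$, and for $s=t$ it equals $\wedge^*T_tM$, which is $\kk$ in degree $0$ and is concentrated in degrees $[0,d]$.

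The main obstacle is purely bookkeeping: checking that the hypotheses of \cite[Thm.\ 1.1]{Lim-Poli--BO} match our setting precisely. Concretely, one must confirm that the quotient stack is smooth and proper with a projective coarse moduli space, that the kernel $\cW$ is a bounded coherent complex on the product, and that the Fourier--Mukai transform in the stacky sense agrees with our composite definition. The last point includes the fact that pushing forward along the trivially-acted factor commutes with the identification of categories. I would handle this by citing the standard equivalence $\Coh_G(Y)\cong\Coh([Y/G])$ together with its compatibility with pull-back, push-forward and tensor products.
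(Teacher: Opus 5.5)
Your proposal is correct and takes the same approach as the paper, which also obtains the lemma as a direct rephrasing of the stacky Bondal--Orlov criterion \cite[Thm.\ 1.1]{Lim-Poli--BO}, using $I_n(\reg_t)\cong\cW_t$. Your additional bookkeeping, namely the identification $\D_{\sym_n}(X^n)\cong\D([X^n/\sym_n])$ and the projection-formula computation of $I_n(\reg_t)$, makes explicit what the paper leaves implicit.
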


So we only need to show that, for $n\ge d$, \eqref{eq:proofcond} implies conditions \ref{cond:selfHom} and \ref{cond:ortho} of \autoref{lem:BO}, which we now do.

The condition $\Hom^*(\cU_t,\cU_t)\cong\kk[0]\oplus\kk^{\oplus d}[-1]$ together with \eqref{eq:boxfibre} and the symmetric K\"unneth formula \autoref{lem:symKun} gives
\[
 \Hom^*_{\sym_n}(\cW_t,\cW_t)\cong S^n\bigl(\kk[0]\oplus \kk^{\oplus d}[-1]\bigr)\cong  \bigoplus_{i=0}^n \wedge^i(\kk^{\oplus d})[-i]\cong \bigoplus_{i=0}^d \wedge^i(\kk^{\oplus d})[-i]
\]
where the last isomorphism uses $n\ge d$. Hence, \ref{cond:selfHom} of \autoref{lem:BO} is satisfied.
Similarly, the condition $\Hom^*(\cU_s,\cU_t)\cong \kk^{\oplus d-1}[-1]$ for $s\neq t$ together with \eqref{eq:boxfibre} and \autoref{lem:symKun} gives
\[
\Hom^*_{\sym_n}(\cW_s,\cW_t)\cong\wedge^n(\kk^{\oplus d-1})[-n]\cong 0
\]
where the vanishing is because of the condition $n\ge d$. This shows that also \ref{cond:ortho} of \autoref{lem:BO} is satisfied, which finishes the proof of \autoref{thm:mainintro}.

\section{Examples}\label{sect:mostexamples}

\subsection{Exceptional Objects}\label{subsect:exceptional}

In the case that $M=\pt$ is a point, we have the canonical identification $\pt\times X\cong X$. Since $d=\dim(\pt)=0$, an object $\cU\in \D(\pt\times X)\cong \D(X)$ satisfies the conditions of \autoref{thm:mainintro} if and only if it is exceptional, which means that $\Hom^*_X(\cU,\cU)=\kk\cdot \id[0]$. Under the isomorphism $\pt\times X^n\cong X^n$, we have $\cU^{\lhd n}\cong \cU^{\boxtimes n}$. That the induced Fourier--Mukai transform
\[
 I_n:=\FM_{\cU^{\lhd n}}\colon \D(\pt)\to \D_{\sym_n}(X^n)
\]
is fully faithful then just means that $\cU^{\boxtimes n}\in \D_{\sym_n}(X^n)\cong \D_{\sym_n}(\pt\times X^n)$ is again an exceptional object, which was already well-known; see e.g.\ \cite[Sect.\ 4]{KSos}.

\subsection{The Diagonal Embedding for Curves}\label{subsect:curvediag}

Let now $C$ be a smooth projective curve, and $M=C=X$. Hence, $d=\dim M=1$. We consider $\cU=\reg_\Delta\in \D(C\times C)$ the structure sheaf of the diagonal. Then, for every $x\in C$, we have $\cU_x\cong \reg_x$. Hence, for $s,t\in C$, we have
\[
 \Hom^*(\cU_s,\cU_t)\cong \begin{cases}
                          \kk[0]\oplus \kk[-1]=\kk[0]\oplus \kk^{\oplus d}[-1]\quad&\text{if $s=t$,} \\
                          0=\kk^{\oplus d-1}[-1]\quad&\text{if $s\neq t$.}
                          \end{cases}
\]
Hence, the conditions of \autoref{thm:mainintro} are fulfilled. Together with \autoref{rem:Idelta}, this gives
\begin{prop}\label{prop:diagonalff}
 For every $n\ge 1$, the equivariant push-forward along the embedding of the small diagonal
 $\delta_{n*}\circ \triv \colon \D(C)\hookrightarrow \D_{\sym_n}(C^n)$ is fully faithful.
\end{prop}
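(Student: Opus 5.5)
The statement should follow by combining \autoref{thm:mainintro} with \autoref{rem:Idelta}; the only work is identifying $I_n$ with $\delta_{n*}\circ\triv$ in the case $\cU=\reg_\Delta$.

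\emph{Full faithfulness of $I_n$.} Take $M=C=X$ and $\cU=\reg_\Delta$. The computation just before the statement shows that \eqref{eq:maincond} holds with $d=1$. Indeed, $\cU_x\cong\reg_x$ for every $x\in C$. For $s=t$ we have $\Hom^*(\reg_t,\reg_t)\cong\kk[0]\oplus\kk[-1]$, since $T_tC$ is one-dimensional. For $s\neq t$ the Hom space vanishes, since the supports are disjoint. By \autoref{thm:mainintro}, $I_n=\FM_{\reg_\Delta^{\lhd n}}$ is fully faithful for every $n\ge d=1$.

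\emph{Identification of $I_n$.} By \autoref{rem:Idelta},
\[
I_n\cong\FM_{\cU^{\boxtimes n}}\circ\delta_{n*}\circ\triv .
\]
Here $\cU^{\boxtimes n}=\reg_\Delta^{\boxtimes n}$ is the structure sheaf of the diagonal of $C^n$, namely $\reg_{\Delta_{C^n}}$. This isomorphism must respect the canonical $\sym_n$-linearisation on the box product, which acts by permuting factors simultaneously on $C^n\times C^n$. The equivariant Fourier--Mukai transform along $\reg_{\Delta_{C^n}}$ with this diagonal linearisation is isomorphic to the identity of $\D_{\sym_n}(C^n)$. Hence $I_n\cong\delta_{n*}\circ\triv$, and this functor is therefore fully faithful.

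\emph{Main obstacle.} The one point needing care is that the linearisations match, so that $\FM_{\reg_\Delta^{\boxtimes n}}$ really is the identity equivariantly and not the identity twisted by a character. Since the permutation linearisation of $\reg_\Delta^{\boxtimes n}$ restricts on the diagonal to the standard linearisation of $\reg_{\Delta_{C^n}}$, with no sign, this is a direct check.

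Alternatively, one can bypass \autoref{rem:Idelta}. Note that $\reg_\Delta^{\lhd n}$ is the structure sheaf of the graph of $\delta_n$ inside $C\times C^n$, with the trivial-type linearisation. Its Fourier--Mukai transform is then visibly $\delta_{n*}\circ\triv$.
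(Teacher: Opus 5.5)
Your proposal is correct and follows the paper's own argument. You check \eqref{eq:maincond} for $\cU=\reg_\Delta$ with $d=1$, apply \autoref{thm:mainintro}, and identify $I_n\cong\delta_{n*}\circ\triv$ via \autoref{rem:Idelta}, using that $\reg_\Delta^{\boxtimes n}\cong\reg_{\Delta_{C^n}}$ is the kernel of the identity. Your linearisation check and the alternative description of $\reg_\Delta^{\lhd n}$ as the structure sheaf of the graph of $\delta_n$ spell out details the paper leaves implicit.
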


\begin{remark}\label{rem:lambdan}
That $\delta_{n*}\circ \triv$ is fully faithful was already known. Its image is part of a semi-orthogonal decomposition of $\D_{\sym_n}(C^n)$ with more pieces; see
\cite[Thm.\ 1.1]{Krug--NakaP} or \cite[Thm.\ B]{PvdB}. Note, however, that the known proofs differ quite a lot from the one going via \autoref{thm:mainintro}. The one in \cite{Krug--NakaP} uses the equivariant normal bundle of the diagonal and the derived self-intersection formula, while the one in \cite{PvdB} uses the Springer correspondence (to prove full faithfulness of a much bigger collection of functors; see \autoref{subsect:PvdB} for some details).
\end{remark}

More generally, let $C$ be a curve and $X$ of arbitrary dimension, and let $\cU\in \D(C\times X)$ be a Fourier--Mukai kernel of any fully faithful functor. Then $\Hom^*(\cU_s,\cU_t)\cong\Hom^*(\reg_s,\reg_t)$. Hence, the condition of \autoref{thm:mainintro} is met and $\FM_{\cU^{\lhd n}}\colon \D(C)\to \D_{\sym_n}(X^n)$ is again fully faithful. However, this gives nothing new, as by \autoref{rem:Idelta}, we have
\[
 \FM_{\cU^{\lhd n}}\cong \FM_{\cU^{\boxtimes n}}\circ \delta_{n*}\circ \triv\,.
\]
The first part $\delta_{n*}\circ\triv$ of this composition is already fully faithful by \autoref{prop:diagonalff}, and also the second part $\FM_{\cU^{\boxtimes n}}\colon \D_{\sym_n}(C^n)\to \D_{\sym_n}(X^n)$ is well-known to be fully faithful; compare \cite[Sect.\ 3.1]{Plo}, \cite[Rem.\ 4.7]{KSos}, \cite{Koseki--sod}.

\subsection{Examples from Strong Exceptional Pairs}

Let $(E_0,E_1)$ be a strong exceptional pair in $\D(X)$ for some smooth projective variety $X$.
This means that $E_0,E_1\in \D(X)$ are exceptional objects, $\Hom^*(E_1,E_0)=0$ and $\Hom^*(E_0,E_1)=\Hom(E_0,E_1)[0]$ is concentrated in degree zero.

Assume from now on that $V\coloneqq\Hom(E_0,E_1)\neq 0$ so that the pair is not fully orthogonal. Also set $M\coloneqq \IP(V^\vee)$ which parametrises lines in $V$. We define on $M\times X$ the object
\[
 \cU:=\cone\bigl( \reg_M(-1)\boxtimes E_0\xrightarrow{\mathfrak t} \reg_M\boxtimes E_1\bigr)
\]
where $\mathfrak t$ is the morphism corresponding to $\id_V$ under the canonical isomorphisms
\[
\Hom\bigl( \reg_M(-1)\boxtimes E_0, \reg_M\boxtimes E_1\bigr)\cong \Hom(\reg_M(-1), \reg_M)\otimes \Hom(E_0,E_1)\cong V^\vee\otimes V\,.
\]
Identifying a point $t\in M$ with a line $\langle t\rangle \subset V=\Hom(E_0,E_1)$, the fibre of $\cU$ is given by
\[
 \cU_t\cong \cone\bigl( E_0\xrightarrow t E_1\bigr)\,.
\]

\begin{prop}\label{prop:pair}
$\cU$ satisfies condition \eqref{eq:maincond} of \autoref{thm:mainintro} with $d=\dim(M)=\dim V-1$.
\end{prop}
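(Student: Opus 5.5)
Write $V=\Hom(E_0,E_1)$ and $m=\dim V$, so $d=m-1$. For $t\in M$, write $C_t:=\cone(E_0\xrightarrow{t}E_1)=\cU_t$, where $t$ is a chosen nonzero vector of the line $\langle t\rangle\subset V$. The plan is to compute $\Hom^*(C_s,C_t)$ directly from the two exact triangles $E_0\to E_1\to C_s$ and $E_0\to E_1\to C_t$, using only that the pair is strong exceptional.

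First we compute the four basic graded Hom spaces. Applying $\Hom^*(E_i,\_)$ to $E_0\xrightarrow{t}E_1\to C_t$ gives two facts.
\begin{itemize}
\item $\Hom^*(E_1,C_t)\cong\Hom^*(E_1,E_1)\cong\kk[0]$, since $\Hom^*(E_1,E_0)=0$.
\item $\Hom^*(E_0,C_t)\cong\cone\bigl(\kk\xrightarrow{t\cdot}V\bigr)\cong (V/\langle t\rangle)[0]$, a space of dimension $d$ in degree $0$, because $t\neq 0$ makes $\kk\to V$ injective.
\end{itemize}

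Next apply $\Hom^*(\_,C_t)$ to the triangle for $C_s$. This yields a long exact sequence
\[
\Hom^*(C_s,C_t)\to\Hom^*(E_1,C_t)\xrightarrow{\ \circ s\ }\Hom^*(E_0,C_t)\to\Hom^*(C_s,C_t)[1].
\]
Hence $\Hom^*(C_s,C_t)$ is the cone, shifted by $[-1]$, of the map $\kk[0]\to (V/\langle t\rangle)[0]$. This map must be identified. The generator of $\Hom(E_1,C_t)$ is the canonical map $E_1\to C_t$, and precomposing with $s$ gives the class of $s$ in $\Hom(E_0,C_t)\cong V/\langle t\rangle$.

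Identifying this map is the main point requiring care: checking that the isomorphism $\Hom(E_0,C_t)\cong V/\langle t\rangle$ is induced by composing with $E_1\to C_t$, so that the induced map is exactly $1\mapsto[s]$. Granting this, the two cases separate cleanly.
\begin{itemize}
\item If $s=t$, the map is zero. Then $\Hom^*(C_t,C_t)\cong\kk[0]\oplus(V/\langle t\rangle)[-1]\cong\kk[0]\oplus\kk^{\oplus d}[-1]$.
\item If $s\neq t$, then $[s]\neq0$ and the map is injective. Its cokernel has dimension $d-1$ and sits in degree $1$ after the shift, so $\Hom^*(C_s,C_t)\cong\kk^{\oplus d-1}[-1]$.
\end{itemize}
This is exactly condition \eqref{eq:maincond}.

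Finally, $d=\dim M$ holds because $M=\IP(V^\vee)$ has dimension $\dim V-1$.
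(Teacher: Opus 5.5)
Your proposal is correct and follows essentially the same route as the paper: you compute $\Hom^*(E_1,\cU_t)\cong\kk[0]$ and $\Hom^*(E_0,\cU_t)\cong V/\langle t\rangle$, and then read off $\Hom^*(\cU_s,\cU_t)$ from the triangle whose middle map is $1\mapsto[s]$; the paper encodes this map via the commutative $3\times 3$ diagram \eqref{eq:Homdiag}. The identification you flag as needing care is immediate, because the isomorphism $V/\langle t\rangle\cong\Hom(E_0,\cU_t)$ comes from the long exact sequence, and the map in that sequence is by definition postcomposition with $E_1\to\cU_t$.
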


\begin{proof}
 The exact triangle $E_0\xrightarrow t E_1\to \cU_t\to$ together with its analogue for $s$ in place of $t$ induces the commutative diagram
\begin{equation}\label{eq:Homdiag}\begin{tikzcd}
	{\Hom^*(\mathcal U_s,E_0)} & {\Hom^*(\mathcal U_s,E_1)} & {\Hom^*(\mathcal U_s,\mathcal U_t)} & {}  \\
	{\Hom^*(E_1,E_0)} & {\Hom^*(E_1,E_1)} & {\Hom^*(E_1,\mathcal U_t)} & {} \\
	{\Hom^*(E_0,E_0)} & {\Hom^*(E_0,E_1)} & {\Hom^*(E_0,\mathcal U_t)} & {} \\
{}	 & {} & {} &{}
	\arrow["t_*", from=1-1, to=1-2]
	\arrow[from=1-1, to=2-1]
	\arrow[from=1-2, to=1-3]
	\arrow[from=1-2, to=2-2]
	\arrow[from=1-3, to=1-4]
	\arrow[from=1-3, to=2-3]
	\arrow["t_*",from=2-1, to=2-2]
	\arrow["s^*",from=2-1, to=3-1]
	\arrow[from=2-2, to=2-3]
	\arrow["s^*",from=2-2, to=3-2]
	\arrow[from=2-3, to=2-4]
	\arrow["s^*",from=2-3, to=3-3]
	\arrow["t_*",from=3-1, to=3-2]
	\arrow[from=3-1, to=4-1]
	\arrow[from=3-2, to=3-3]
	\arrow[from=3-2, to=4-2]
	\arrow[from=3-3, to=3-4]
	\arrow[from=3-3, to=4-3]
\end{tikzcd}\end{equation}
where all columns and rows are exact triangles in the derived category of vector spaces corresponding to long exact sequences of the Hom spaces in the individual degrees.
We have $\Hom^*(E_0,E_0)=\kk\cdot \id_{E_0}$. Hence, from the lower line, we get $\Hom^*(E_0,\mathcal \cU_t)\cong V/\langle t\rangle$, concentrated in degree $0$.
Since $\Hom^*(E_1,E_0)=0$, the middle line gives \[\kk\cdot \id_{E_1} = \Hom^*(E_1,E_1)\cong \Hom^*(E_1,\cU_t)\,.\] Hence, the right column is isomorphic to
\begin{equation}\label{eq:sesst}
 \Hom^*(\cU_s,\cU_t)\to \kk \xrightarrow{1\mapsto s} V/\langle t\rangle\to \Hom^*(\cU_s,\cU_t)[1] \,.
\end{equation}
If $s$ and $t$ define the same point of $\IP(V^\vee)$, the middle arrow is zero, hence
\[
\Hom^*(\cU_t,\cU_t)\cong \kk[0]\oplus V/\langle t\rangle [-1]\cong \kk[0]\oplus \kk^d[-1] \,.
\]
If $s$ and $t$ define different points of $\IP(V^\vee)$, then \eqref{eq:sesst} gives
\[
\Hom^*(\cU_s,\cU_t)\cong V/\langle s, t\rangle [-1]\cong \kk^{d-1}[-1] \,.\qedhere
\]
 \end{proof}

\begin{example}\label{ex:hypersurfaces}
Let $X=\IP(W)$ for some vector space $W$, and let $\ell$ be an integer with $0<\ell<\dim(W)$. Then $E_0=\reg_X(-\ell)$ and $E_1=\reg_X$ form a strong exceptional sequence. Then $V=\Hom(E_0,E_1)\cong S^\ell W$ and the universal cone $\cU$ on $M\times X=\IP(S^\ell W^\vee)\times \IP(W)$ agrees with the structure sheaf of the universal family of degree $\ell$ hypersurfaces. Hence, for $n\ge d=\binom{\dim W+\ell -1}{\dim W-1}-1$, we get a fully faithful functor $I_n\colon \D(\IP(S^\ell W^\vee))\hookrightarrow\D_{\sym_n}(\IP(W)^n)$ whose Fourier--Mukai kernel $\cU^{\lhd n}$ is the structure sheaf of the $\sym_n$-invariant subvariety
\[
 \cZ_n=\Bigl\{(D,x_1,\dots,x_n)\mid D\subset X \text{ is a degree $\ell$ hypersurface, }x_1,\dots,x_n\in D\Bigr\}\subset M\times X^n \,.
\]
For $\dim W=3$, where $X\cong \IP^2$ is the projective plane, $\ell=1$, and $n=2$,
the fully faithful functor $I_2\colon \D(\IP(W^\vee))\to \D_{\sym_2}(\IP(W)^2)$
composed with the derived McKay correspondence $\D_{\sym_2}(\IP(W)^2)\cong \D(\IP(W)^{[2]})$ agrees with the pull-back along the $\IP^2$-fibration $\IP(W)^{[2]}\to \IP(W^\vee)$ which is given by sending a length 2 cluster to the unique line it is contained in; compare the proof of \cite[Prop.\ 6.6]{Hochenegger-Krug--Pformal}.
\end{example}

\subsection{Moduli spaces of stable sheaves on surfaces}\label{subsect:surface}

\begin{prop}\label{prop:surfcondition}
 Let $X$ be a smooth projective surface, and let $M$ be a smooth projective component of the moduli space of Gieseker stable sheaves on $X$ with respect to some fixed polarisation, which has a universal family $\cU\in \Coh(M\times X)$. Then, $\cU$ satisfies condition \eqref{eq:maincond} of \autoref{thm:mainintro} if and only if
 \begin{equation}\label{eq:omegacond}
  \Hom(\cU_t,\cU_s\otimes \omega_X)=0
 \end{equation}
for all $s,t\in M$ (including the case $s=t$).
\end{prop}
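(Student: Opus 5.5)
The plan is to compute $\Hom^*(\cU_s,\cU_t)$ degree by degree, using three ingredients: stability, Serre duality on the surface $X$, and the deformation theory of the smooth component $M$. Since $\cU$ is a universal family of stable sheaves, it is flat over $M$. Hence $\cU_t$ is the stable sheaf parametrised by $t$, concentrated in degree zero, and $\Hom^*(\cU_s,\cU_t)=\Ext^*(\cU_s,\cU_t)$ lives in degrees $0,1,2$. The comparison with \eqref{eq:maincond} then goes through the following steps.

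\emph{Step 1 (degree $2$).} By Serre duality,
\[
\Ext^2(\cU_s,\cU_t)\cong \Hom(\cU_t,\cU_s\otimes\omega_X)^\vee .
\]
The condition \eqref{eq:maincond} requires $\Ext^2(\cU_s,\cU_t)=0$ for all $s,t$. This already shows that \eqref{eq:maincond} implies \eqref{eq:omegacond}, with the indices swapped, which is harmless since the condition is imposed for all pairs.

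\emph{Step 2 (degree $0$).} Stable sheaves with the same Hilbert polynomial are simple. Moreover, any nonzero morphism between two of them is an isomorphism. So $\Hom(\cU_t,\cU_t)=\kk$, and $\Hom(\cU_s,\cU_t)=0$ for $s\neq t$, because distinct points of the fine moduli space $M$ correspond to non-isomorphic sheaves. This matches \eqref{eq:maincond} in degree $0$ unconditionally.

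\emph{Step 3 (degree $1$).} The Euler pairing $\chi(\cU_s,\cU_t)$ depends only on the numerical classes, hence is constant on $M$; alternatively one can invoke \autoref{lem:Eulerconst} applied to $\sHom$ of the family. Assume \eqref{eq:omegacond}. Then $\Ext^2(\cU_t,\cU_t)=0$ means the deformations of $\cU_t$ are unobstructed. Since $M$ is a smooth component with universal family, the tangent space gives $\Ext^1(\cU_t,\cU_t)\cong T_tM$, which has dimension $d$. Therefore $\chi(\cU_t,\cU_t)=1-d$. For $s\neq t$ we have $\Ext^0=\Ext^2=0$, so
\[
\dim\Ext^1(\cU_s,\cU_t)=-\chi(\cU_s,\cU_t)=d-1 .
\]
Together with Step 2, this gives \eqref{eq:maincond}.

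The converse direction is Step 1. So both implications follow, and the main point needing care is the identification $\Ext^1(\cU_t,\cU_t)\cong T_tM$ for a smooth component of a fine moduli space. This is standard deformation theory, and I would cite it rather than prove it.
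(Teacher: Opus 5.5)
Your proof is correct and is essentially the paper's argument: Serre duality gives necessity, stability handles degree $0$, smoothness of $M$ gives $\Ext^1(\cU_t,\cU_t)\cong T_tM\cong\kk^d$, and constancy of the Euler characteristic then forces $\Hom^*(\cU_s,\cU_t)\cong\kk^{d-1}[-1]$ for $s\neq t$. The only cosmetic difference is that the paper gets the constancy of $\chi$ from \autoref{lem:Eulerconst} applied to $\cA=\pr_{13*}\sHom(\pr_{12}^*\cU,\pr_{23}^*\tau^*\cU)$, and mentions your numerical-class (Riemann--Roch) argument only as an alternative in a remark.
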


\begin{proof}
By Serre duality, $\Hom(\cU_t,\cU_s\otimes \omega_X)\cong \Ext^2(\cU_s, \cU_t)^\vee$, which shows directly that \eqref{eq:omegacond} is necessary.
Let us now assume that \eqref{eq:omegacond} holds, and deduce \eqref{eq:maincond} from it.
For stable sheaves of the same Hilbert polynomial, we have
\begin{equation}\label{eq:Homstable}
 \Hom(\cU_s,\cU_t)=\begin{cases}
                    \kk\cdot \id\quad&\text{if $s=t$,}\\
                   0\quad&\text{if $s\neq t$;}
                   \end{cases}
\end{equation}
see e.g.\ \cite[Prop.\ 1.2.7]{HL}.
Since in the $s=t$ case, $\Ext^1(\cU_t,\cU_t)\cong T_M(t)\cong \kk^d$ by smoothness of $M$, we also get immediately that \eqref{eq:omegacond} implies $\Hom^*(\cU_t,\cU_t)\cong \kk[0]\oplus \kk^{\oplus d}[-1]$.

Let $\tau\colon X\times M\to M\times X$ be the morphism which switches the factors, and let $\pr_{ij}$ denote the projection from the triple product $M\times X\times M$ to pairs of factors. We consider
\[
 \cA:=\pr_{13*}\sHom(\pr_{12}^* \cU, \pr_{23}^*\tau^* \cU)\,\in \D(M\times M)\,.
\]
Then, if we denote the embedding of a point $(s,t)\in M\times M$ by $\iota_{s,t}\colon \pt\hookrightarrow M\times M$, we have
\begin{equation}\label{eq:Afibres}
 \iota_{s,t}^*\cA\cong \Hom^*(\cU_s,\cU_t)
\end{equation}
by base change and compatibility of $\sHom$ with pull-backs (recall our convention that all functors are derived).
For $s=t$, we already know that $\chi\bigl(\Hom^*(\cU_t,\cU_t)\bigr)=1-d$. Hence, by \autoref{lem:Eulerconst}, also
$\chi\bigl(\Hom^*(\cU_s,\cU_t)\bigr)=1-d$ for $s\neq t$.
Together with the vanishings \eqref{eq:omegacond} and \eqref{eq:Homstable}, this gives $\Hom^*(\cU_s,\cU_t)\cong \kk^{d-1}[-1]$.
\end{proof}


\begin{remark}
 Instead of using the object $\cA:=\pr_{13*}\sHom(\pr_{12}^* \cU, \pr_{23}^*\tau^* \cU)\,\in \D(M\times M)$, we can alternatively obtain the constancy of the Euler characteristics from Riemann--Roch because the numerical class of the Chern classes of the $\cU_t$ does not vary in the family. However, we need a similar argument involving an analogous object $\cA$ later in the proof of \autoref{thm:exactcriterion}, which is why we decided to present the argument in this form already here.
\end{remark}

Let us mention two classes of moduli spaces where \eqref{eq:omegacond} is fulfilled. The first is when $X$ is a del Pezzo surface. Then every moduli space $M$ satisfies the condition (as long as it parametrises stable sheaves with positive dimensional support, which rules out $X$ seen as a moduli space of skyscraper sheaves on $X$). Indeed, we have the inequality $p(\cU_t)>p(\cU_s\otimes \omega_X)$ of the reduced Hilbert polynomials, which gives the vanishing $\Hom(\cU_t,\cU_s\otimes \omega_X)=0$. Indeed, if we assume that there is a non-zero homomorphism $\cU_t\to \cU_s\otimes \omega_X$ with image $Q\subset \cU_s\otimes \omega_X$, stability of $\cU_t$ and $\cU_s$ together with ampleness of $\omega_X^{-1}$ give the contradiction
\[
 p(\cU_t)\le p(Q) < p(Q\otimes\omega_X^{-1})\le p(\cU_s)=p(\cU_t)\,.
\]
Let now $X$ be an Enriques surface. As $q(X)=0$, fixing the determinant in addition to the Chern classes gives full components of the moduli of semi-stable sheaves on $X$. Let us fix such a component $M$ which is smooth and parametrises stable sheaves of odd rank. Then, for $s,t\in M$, the sheaves $\cU_t$ and $\cU_s\otimes \omega_X$ are non-isomorphic because their determinants differ by $\omega_X$. However, they still have the same Hilbert polynomial, hence
$\Hom(\cU_t,\cU_s\otimes \omega_X)=0$ again by \cite[Prop.\ 1.2.7]{HL}.

\begin{remark}
For $X$ a smooth projective surface, there is an equivalence $\D_{\sym_n}(X^n)\cong \D(X^{[n]})$ which is a special case of the derived McKay correspondence; see \cite{BKR} and \cite{Hai}. Hence, the above gives in many cases a fully faithful functor from moduli spaces of sheaves on $X$ to a Hilbert scheme of points on $X$.

In the case that $M$ parametrises only vector bundles, after making the correct concrete choice of the equivalence $\Psi\colon \D_{\sym_n}(X^n)\to \D(X^{[n]})$ and denoting its base change by
\[\Psi_M\colon \D_{\sym_n}(M\times X^n)\to \D(M\times X^{[n]})\] one gets that $\cF:=\Psi_M(\cU^{\lhd n})$ is a vector bundle on $M\times X^{[n]}$; see \cite[Lem.\ 1.1]{KRZ--gentaut}.
Furthermore, it is shown in \cite{KRZ--gentaut} under the additional assumption that all fibres of $\cU$ are slope stable
that $\cF$ is a universal family of slope stable bundles on $X^{[n]}$ with respect to some polarisation, which turns $M$ into a complete connected component of the moduli space of stable sheaves on $X^{[n]}$. Indeed, this is the special case $\lambda=(n)$ of \cite[Thm.\ 0.1 \& 0.2]{KRZ--gentaut}.

We have $\FM_{\cF}\cong \Psi\circ \FM_{\cU^{\lhd n}}\cong \Psi\circ I_n$; see e.g.\ \cite[Prop.\ 2.1.6]{Orlsurvey}. Hence, in the cases described above, we start with a moduli space of stable bundles on a surface $X$, identify it with a moduli space of stable bundles on a Hilbert scheme, and the universal family of this moduli space induces a fully faithful functor to the derived category of the Hilbert scheme.
\end{remark}

\section{Moduli of Stable Bundles on Curves}\label{sect:M}

\subsection{Establishing the Fully Faithful Embedding}\label{subsect:Mholds}

Let $C$ be a smooth projective curve of genus $g=g(C)\ge 1$. Let $r,e$ be coprime integers with $r\ge 1$. Then the moduli space $M\coloneqq M_C(r,e)$ of stable vector bundles of rank $r$ and degree $e$ on $C$ is smooth and projective of dimension
\[
 d:=\dim(M)=r^2(g-1)+1\,;
\]
see e.g.\ \cite[Thm.\ 3.7(2)]{MR4773901}.
Let $\cU\in \D(M\times C)$ denote the universal family.

\begin{prop}\label{prop:modulicurves}
For $s,t\in M$, we have
\[
 \Hom^*(\cU_s,\cU_t)\cong \begin{cases}
                          \kk[0]\oplus \kk^{\oplus d}[-1]\quad&\text{if $s=t$,} \\
                          \kk^{\oplus d-1}[-1]\quad&\text{if $s\neq t$.}
                          \end{cases}
\]
Hence, the assumptions of \autoref{thm:mainintro} are satisfied, which gives that the functor
\[
 I_n=\FM_{\cU^{\lhd n}}\colon \D(M)\to \D_{\sym_n}(C^n)
\]
is fully faithful for every $n\ge d$.
\end{prop}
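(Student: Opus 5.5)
The plan is to compute $\Hom^*(\cU_s,\cU_t)$ for stable bundles on a curve, using that $C$ has dimension $1$, so only $\Hom$ and $\Ext^1$ can be nonzero. Full faithfulness of $I_n$ for $n\ge d$ then follows directly from \autoref{thm:mainintro}.

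Since $\cU$ is a universal family of vector bundles, it is flat over $M$, and each fibre $\cU_t$ is the stable bundle of rank $r$ and degree $e$ parametrised by $t$. As $\dim C=1$, we have $\Hom^*(\cU_s,\cU_t)=\Hom(\cU_s,\cU_t)[0]\oplus\Ext^1(\cU_s,\cU_t)[-1]$.

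For the degree $0$ part, stable bundles of the same slope satisfy $\Hom(\cU_s,\cU_t)=\kk\cdot\id$ if $s=t$ and $0$ if $s\neq t$; this is the curve analogue of \eqref{eq:Homstable}, e.g.\ \cite[Prop.\ 1.2.7]{HL}. Here we use that distinct points of $M$ correspond to non-isomorphic bundles. The degree $1$ part is then determined by Riemann--Roch applied to $\cU_s^\vee\otimes\cU_t$, a bundle of rank $r^2$ and degree $r e-e r=0$:
\[
\chi(\cU_s,\cU_t)=r^2(1-g)=1-d\,.
\]
Alternatively, one can invoke \autoref{lem:Eulerconst} for the object $\cA$ on $M\times M$, exactly as in the proof of \autoref{prop:surfcondition}. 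For $s=t$ this gives $\dim\Ext^1(\cU_t,\cU_t)=1-\chi=d$, which also matches $T_M(t)$. For $s\neq t$ it gives $\dim\Ext^1(\cU_s,\cU_t)=d-1$. This yields exactly \eqref{eq:maincond}.

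There is no real obstacle in this argument. The only point needing care is that the fibres $\cU_t$ really are the parametrised stable bundles, and that non-isomorphic stable bundles of equal slope have no nonzero morphisms. Both are standard for a fine moduli space; fineness holds because $\gcd(r,e)=1$.
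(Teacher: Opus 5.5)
Your proposal is correct and follows the paper's own proof. In both, stability gives the degree-$0$ part, Riemann--Roch gives $\chi\bigl(\Hom^*(\cU_s,\cU_t)\bigr)=r^2(1-g)=1-d$, and the $\Ext^1$ dimensions then follow because higher Exts vanish on a curve.
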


\begin{proof}
 The proof is essentially the same as that of \autoref{prop:surfcondition}, only simpler because $\Ext^2$ vanishes automatically. Indeed, by stability of the bundles, we have
\[
 \Hom(\cU_s,\cU_t)\cong \begin{cases}
                          \kk \quad&\text{if $s=t$,} \\
                          0 \quad&\text{if $s\neq t$.}
                          \end{cases}
\]
The result follows, because by Riemann--Roch $\chi\bigl(\Hom^*(\cU_s,\cU_t) \bigr)=r^2(1-g)=1-d$ for all $s,t\in M$.
 \end{proof}

\subsection{Study of the Orthogonal Complement}\label{subsect:PvdB}

By \cite[Thm.\ B]{PvdB}, there is a semi-orthogonal decomposition of $\D_{\sym_n}(C^n)$ whose pieces are in bijection with the conjugacy classes of $\sym_n$, and which lifts the direct sum decomposition of orbifold cohomology into twisted sectors to the derived category. We want to study how this semi-orthogonal decomposition interacts with the admissible subcategory $I_n(\D(M))$ of \autoref{prop:modulicurves}. For this, we first recall the construction of \cite{PvdB} and then prove that many of the pieces of this semi-orthogonal decomposition are contained in the left-orthogonal complement of $I_n(\D(M))$; see \autoref{prop:complement}.

We write the symmetric power of the curve as $C^{(n)}:=C^n/\sym_n$ and the quotient morphism as $\tau_n\colon C^n\to C^{(n)}$.
Recall that the conjugacy classes of $\sym_n$ are in bijection with partitions of $n$. Let $\lambda=(\lambda_1\ge \lambda_2\ge \dots\ge \lambda_k)$ be such a partition. Then, one representative of the corresponding conjugacy class is
\[
 g_\lambda:=(1,\dots, \lambda_1)(\lambda_1+1,\dots, \lambda_1+\lambda_2)\cdots (\sum_{i=1}^{k-1}\lambda_i +1,\dots,\sum_{i=1}^k\lambda_i)\,.
\]
We denote its fixed-point locus by $C[\lambda]:=(C^n)^{g_\lambda}$. There is an isomorphism $C^k\cong C[\lambda]$ because $C[\lambda]\subset C^n$ is the image of the closed embedding
\begin{equation}\label{eq:delta}
\delta_\lambda:= \delta_{\lambda_1}\times\dots\times\delta_{\lambda_k}\colon C^k\hookrightarrow C^{\lambda_1}\times\dots\times C^{\lambda_k}=C^n
\end{equation}
where the factors $\delta_{\lambda_i}\colon C\hookrightarrow C^{\lambda_i}$ with $\delta_{\lambda_i}(x)=(x,\dots,x)$ are the embeddings of the small diagonals. Concretely, this means
\[
 \delta_\lambda(x_1,\dots,x_k)=\bigl(\underbrace{x_1,\dots,x_1}_{\lambda_1 \text{ times}}, \underbrace{x_2,\dots,x_2}_{\lambda_2 \text{ times}},\dots,  \underbrace{x_k,\dots,x_k}_{\lambda_k \text{ times}}  \bigr)\,.
\]
 We write $\lambda=1^{\mu_1}2^{\mu_2}\cdots \lambda_1^{\mu_{\lambda_1}}$, where $\mu_i$ is the number of times the number $i$ occurs in $\lambda$. For example $(4,3,3,1,1,1)=1^33^24^1=1^32^03^24^1$ in that notation. Then, $\mu_1+\mu_2+\dots+\mu_{\lambda_1}=k$ is the number of parts of $\lambda$, and we consider the subgroup
\[
\sym_\mu=\sym_{\mu_1}\times\dots\times\sym_{\mu_{\lambda_1}}\le \sym_k\,.
\]
Let $\Cent(g_\lambda)\le \sym_n$ be the centraliser of $g_\lambda$. Then there is an isomorphism
\[
 \Cent(g_\lambda)\cong \left(\prod_{i=1}^k Z_{\lambda_i}\right) \rtimes \sym_\mu
\]
where on the right, we have a product of the cyclic groups $Z_{\lambda_i}$ generated by the cycles $g_\lambda$ is composed of, and $\sym_\mu$ acts on this product by permuting the cycles of the same length; see e.g.\ \cite[Prop.\ 1.1.1]{Sagan--book}. The cycles of $g_\lambda$ act trivially on $C[\lambda]=(C^n)^{g_\lambda}$. It follows that the action of $\Cent(g_\lambda)$ on $C[\lambda]\cong C^k$ is given by the action of its quotient $\sym_\mu\le \sym_k$ which permutes the factors. Hence, we have
\begin{equation}\label{eq:quotientdesc}
C_{(\lambda)}:=C[\lambda]/\Cent(g_\lambda)\cong C^{(\mu_1)}\times\dots\times C^{(\mu_{\lambda_1})}\,.
\end{equation}
By the universal property of the quotient $C_{(\lambda)}$, there is a canonical morphism $\nu_\lambda\colon C_{(\lambda)}\to C^{(n)}$. Writing points in the symmetric powers as formal sums of points of $C$, this morphism can be explicitly described under the isomorphism \eqref{eq:quotientdesc} by
\begin{equation}\label{eq:nu}
\begin{aligned}
&\nu_\lambda\bigl(x^1_1+\dots+x^1_{\mu_1}, x^2_1+\dots+x^2_{\mu_2},\dots, x^{\lambda_1}_1+\dots+x^{\lambda_1}_{\mu_{\lambda_1}}  \bigr)\\=& x^1_1+\dots+x^1_{\mu_1}+ 2(x^2_1+\dots+x^2_{\mu_2})+\dots+ \lambda_1(x^{\lambda_1}_1+\dots+x^{\lambda_1}_{\mu_{\lambda_1}})\,.
\end{aligned}
\end{equation}
We define $\olZ_\lambda:=\bigl(C_{(\lambda)}\times_{C^{(n)}} C^n \bigr)_{\mathsf{red}}$ as the reduction of the fibre product defined by the morphisms $\nu_\lambda$ and $\tau_n$. In particular, there is a commutative diagram
\begin{equation}\label{eq:Zdiag}
\begin{tikzcd}
	{\olZ_\lambda} & {C^n} \\
	{C_{(\lambda)}} & {C^{(n)}}\,.
	\arrow["\alpha", from=1-1, to=1-2]
	\arrow["q"', from=1-1, to=2-1]
	\arrow["{\tau_n}", from=1-2, to=2-2]
	\arrow["{\nu_\lambda}", from=2-1, to=2-2]
\end{tikzcd}
\end{equation}
Note that, at first sight, this looks different from the way $\olZ_\lambda$ is defined in the introduction of \cite{PvdB}. However, it is not hard to check that both ways to phrase it lead to the same $\olZ_\lambda$ with the same morphisms to $C_{(\lambda)}$ and $C^n$. The $\sym_n$-action on $C^n$ induces an $\sym_n$-action on $\olZ_\lambda$ such that $\alpha$ is $\sym_n$-equivariant and $q$ is $\sym_n$-invariant.
For later use, note that
\begin{equation}\label{eq:qZ}
\alpha(\olZ_\lambda)=\sym_n\cdot C[\lambda]\subset C^n\,.
\end{equation}
The fully faithful embeddings of \cite{PvdB} are given by the equivariant Fourier--Mukai transforms
\begin{equation}\label{eq:F}
 F_\lambda\coloneqq \FM_{\reg_{\olZ_\lambda}}\cong \alpha_*\circ q^*\circ \triv \colon \D(C_{(\lambda)})\to \D_{\sym_n}(C^n)\,.
\end{equation}
For $\lambda=(n)$, we have $C_{(n)}=C$, and the subvariety $\olZ_{(n)}\subset C\times C^n=C^{n+1}$ equals the small diagonal. Hence, $F_{(n)}=\delta_{n*}\circ\triv$ is the fully faithful functor of \autoref{prop:diagonalff}; see also \autoref{rem:lambdan}.

\begin{theorem}[{\cite[Thm.\ B]{PvdB}}]\label{thm:PvdB}
The functors $F_\lambda$ are fully faithful for every partition $\lambda$ of $n$, and their images form a semi-orthogonal decomposition of $\D_{\sym_n}(C^n)$.
\end{theorem}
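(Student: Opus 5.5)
The plan is to prove the three parts of \autoref{thm:PvdB} separately: full faithfulness of each $F_\lambda$, semi-orthogonality for a suitable ordering of the partitions, and generation.

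\textbf{Full faithfulness.} By \eqref{eq:F} we have $F_\lambda\cong\alpha_*\circ q^*\circ\triv$. Its right adjoint is $(\_)^{\sym_n}\circ q_*\circ\alpha^!$, where $(\_)^{\sym_n}$ is taken on $C_{(\lambda)}$ with its trivial action. Full faithfulness is then equivalent to the unit $\id\to (q_*\alpha^!\alpha_*q^*(\_))^{\sym_n}$ being an isomorphism. I would check this pointwise, in the spirit of \autoref{lem:BO}, on skyscrapers $\reg_y$ for $y\in C_{(\lambda)}$.

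The fibre $q^{-1}(y)$ is a finite $\sym_n$-orbit of points of $C^n$. Working étale locally around this orbit reduces the computation to an equivariant local model: a product of small diagonals in $\mathbb A^{\lambda_i}$, acted on by the stabiliser. There the object $\alpha^!\alpha_*$ is computed by the equivariant derived self-intersection (Koszul) formula. What has to be shown is that the invariant part of $\wedge^\bullet N$ is exactly the cohomology of $\reg_y$ in $C_{(\lambda)}$, i.e. $\wedge^\bullet T_yC_{(\lambda)}$. Here $N$ is the normal bundle of $C[\lambda]$, a sum of non-trivial characters of the cyclic groups $Z_{\lambda_i}$.

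This reduces to a Lie-theoretic computation for $\sym_m$ acting on its reflection representation $\mathfrak h$: the $Z_m$-invariants of $\wedge^\bullet$ of the non-trivial eigenspaces of an $m$-cycle. This is a classical fact of Springer theory, namely that the cohomology of the Springer fibre contributes only the trivial part for a regular element.

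\textbf{Semi-orthogonality.} I would order the partitions by the dimension $k$ of $C[\lambda]$, refined by dominance. For $\lambda\neq\lambda'$ I need $\Hom^*_{\sym_n}(F_\lambda A,F_{\lambda'}B)=0$ in one direction. By adjunction this amounts to vanishing of $\sym_n$-invariants of a sheaf supported on $\alpha(\olZ_\lambda)\cap\alpha(\olZ_{\lambda'})$. For that I would use \autoref{lem:componentvanish}: restrict to a component $\sym_n\cdot C[\lambda'']$ of the intersection and show that the invariants under a subgroup $W$ acting trivially on $C[\lambda'']$ vanish. Again this reduces locally to $\sym_m$ acting on $\mathfrak h$, where it becomes the orthogonality of different Springer representations.

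\textbf{Generation.} This is the main obstacle. My first attempt is a numerical argument. The orbifold (Chen--Ruan) cohomology of $[C^n/\sym_n]$ decomposes as $\bigoplus_\lambda \Ho^*(C_{(\lambda)})$. Comparing Hochschild homology, which is additive under semi-orthogonal decompositions, shows that the right orthogonal of all the images has vanishing Hochschild homology; this alone only rules out non-phantom complements.

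To exclude phantoms, I would instead argue by induction along the stratification of $C^{(n)}$ by multiplicity type. Take an object $E$ right-orthogonal to all images. Orthogonality to $F_{(1^n)}$, whose kernel is the structure sheaf of $C^n$ itself, makes the invariant push-forward of $E$ to $C^{(n)}$ vanish, so $E$ is supported on the big diagonal. Proceeding to deeper strata and using the local Springer decomposition of $\D_{\sym_m}(\mathfrak h)$ near each stratum, one shows step by step that $E$ is supported on smaller and smaller loci, hence $E=0$. The delicate point is making this local-to-global step rigorous, presumably via formal neighbourhoods of the strata.
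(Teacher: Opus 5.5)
The paper does not prove this statement; it cites it from \cite[Thm.\ B]{PvdB}. Your outline has the right architecture, namely reducing everything to the local models of $\sym_m$ acting on $\mathfrak h$. However, the steps that carry the actual content are missing, so there is a genuine gap.

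\textbf{Full faithfulness.} Your local model (a reduced orbit, $\alpha$ locally the embedding of a product of small diagonals, and the Koszul formula for $\alpha^!\alpha_*$) is valid only over the open subset of $C_{(\lambda)}$ where all the points $x^j_i$ are distinct. The Bondal--Orlov criterion needs every skyscraper. At the remaining points all three ingredients fail:
\begin{enumerate}
\item Since $q$ is finite flat, its fibres become non-reduced when points collide. For $n=3$ and $\lambda=(2,1)$, the fibre over $(a,a)\in C\times C=C_{(\lambda)}$ is a length-$3$ fat point at $(a,a,a)$. Near it, $\alpha(\olZ_\lambda)$ consists of three smooth surfaces meeting along the small diagonal.
\item $\olZ_\lambda$ is only Cohen--Macaulay, and $\alpha$ is not an embedding, so no self-intersection formula is available.
\item $\nu_\lambda$ is not injective. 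For $\lambda=(2,1,1)$, the points $(a+a,c)$ and $(c+c,a)$ of $C^{(2)}\times C$ both map to $2a+2c$. So even the off-diagonal Bondal--Orlov condition is a non-trivial orthogonality between objects supported on the same orbit.
\end{enumerate}
Handling exactly these points is where \cite{PvdB} need Lusztig's results on the Springer correspondence and a formality statement. Calling this ``a classical fact of Springer theory'' does not replace that work.

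Your generic computation also uses the wrong group. The stabiliser of a generic point of $C[\lambda]$ is $\sym_\lambda=\prod_i\sym_{\lambda_i}$, not $\prod_i Z_{\lambda_i}$. With $Z_m$-invariants the claim is false: for $m=3$, the wedge product of the two non-trivial eigenlines of a $3$-cycle on $\mathfrak h$ is $Z_3$-invariant. The correct statement is $(\wedge^\bullet\rho_m)^{\sym_m}=\kk$ for the standard representation $\rho_m$. It is elementary and has nothing to do with Springer fibres.

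\textbf{Semi-orthogonality.} \autoref{lem:componentvanish} needs a torsion-free sheaf. In \autoref{prop:complement} this is supplied by \autoref{lem:VB}, because there $\alpha^!$ is applied to a vector bundle on $C^n$. For $R_{\lambda'}\circ F_\lambda$ you must apply $\alpha_{\lambda'}^!$ to $\alpha_{\lambda*}q_\lambda^*A$. This gives a complex on the typically non-transversal fibre product $\olZ_\lambda\times_{C^n}\olZ_{\lambda'}$ of two singular varieties. It generally has several non-zero cohomology sheaves about which you know nothing a priori, not even torsion-freeness. The lemma therefore tells you nothing until those sheaves, and the actions of the relevant stabilisers on them, are computed at the singular strata. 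That computation is precisely the ``orthogonality of different Springer representations'' you invoke without proof.

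\textbf{Generation.} Your argument uses the local semi-orthogonal decomposition of $\D_{\sym_m}(\mathfrak h)$, which is the local form of the very theorem being proved. It also relies on a local-to-global step that you yourself flag as not rigorous.

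\textbf{What survives.} The parts of your outline that can be made to work are:
\begin{enumerate}
\item full faithfulness of $F_{(1^n)}$ and $F_{(n)}$, where $\olZ_\lambda$ is smooth and $\alpha$ is an embedding;
\item the generic-stratum computation, once the group is corrected;
\item the first step of the generation induction, showing that support lies on the big diagonal.
\end{enumerate}
Everything else must be taken from \cite{PvdB}, which is exactly what the paper does.
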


By \eqref{eq:F}, the right-adjoint of $F_\lambda$ is
\begin{equation}\label{eq:Rdesc}
R_\lambda= (\_)^{\sym_n}\circ q_* \circ \alpha^!\colon \D_{\sym_n}(C^n)\to \D(C_{(\lambda)})\,.
\end{equation}

\begin{lemma}\label{lem:VB}
Let $E\in \VB_{\sym_n}(C^n)$ be an $\sym_n$-equivariant vector bundle.
\begin{enumerate}
 \item\label{torsionfree} The twisted pull-back $\alpha^!E\in \D(\olZ_\lambda)$ is a torsion free sheaf shifted into degree $n-k$.
\item\label{vb} The image $R_\lambda(E) \in \D(C_{(\lambda)})$ is a vector bundle on $C_{(\lambda)}$ shifted into degree $n-k$.
 \end{enumerate}
\end{lemma}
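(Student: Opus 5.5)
The plan is to reduce both statements to local facts about the finite map $\alpha\colon \olZ_\lambda\to C^n$ and the finite map $q\colon \olZ_\lambda\to C_{(\lambda)}$. First I will record the geometry. Since $\tau_n$ and $\nu_\lambda$ are finite, $\alpha$ and $q$ are finite. Moreover, $\olZ_\lambda$ is a reduced scheme whose irreducible components are the $\sym_n$-translates of $C[\lambda]\cong C^k$, by \eqref{eq:qZ}; hence it has pure dimension $k$. Also, $C^n$ is smooth of dimension $n$, and $\alpha$ is finite onto its image, which has codimension $n-k$.

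For \ref{torsionfree}, I will use the standard formula for a finite morphism $\alpha$ to a smooth variety:
\[
\alpha^!E\cong \alpha^!\reg_{C^n}\otimes \alpha^*E ,
\]
where the tensor product is underived on the locally free factor. Since $E$ is locally free, it suffices to show that $\alpha^!\reg_{C^n}=\omega_{\olZ_\lambda}\otimes \alpha^*\omega_{C^n}^{-1}[-(n-k)]$, i.e.\ that the dualizing complex $\omega^\bullet_{\olZ_\lambda}$ is a sheaf in a single degree and is torsion free. The key input is that $\olZ_\lambda$ is Cohen--Macaulay. The plan for that step is to invoke the result of \cite{PvdB}, who prove that $\olZ_\lambda$ is Cohen--Macaulay (this is what makes $F_\lambda$ well behaved). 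Failing that, I would check it locally: $C^{(n)}$ and $C_{(\lambda)}$ are smooth, so the claim is local on $C_{(\lambda)}$, and near a point the scheme is a union of coordinate-type linear subspaces glued in a way one can compare with the Springer-type description in \cite{PvdB}. Given Cohen--Macaulayness, $\omega^\bullet_{\olZ_\lambda}$ is a single maximal Cohen--Macaulay sheaf $\omega_{\olZ_\lambda}$ in degree $-k$. A maximal Cohen--Macaulay sheaf on a reduced equidimensional scheme has no embedded or lower-dimensional associated points, hence is torsion free. Tensoring with the line bundle $\alpha^*\omega_{C^n}^{-1}$ and the vector bundle $\alpha^*E$ preserves this property, and the shift works out to degree $n-k$.

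For \ref{vb}, set $G:=\alpha^!E[n-k]$, a maximal Cohen--Macaulay sheaf on $\olZ_\lambda$ by the previous paragraph. Since $q$ is finite, $q_*G$ is a sheaf with no higher direct images. It is maximal Cohen--Macaulay on $C_{(\lambda)}$, because depth is preserved under finite pushforward and $\dim C_{(\lambda)}=k=\dim \olZ_\lambda$. By \eqref{eq:quotientdesc}, $C_{(\lambda)}$ is a product of symmetric powers of a curve, hence smooth, so Auslander--Buchsbaum shows that $q_*G$ is locally free. Finally, taking $\sym_n$-invariants is exact in characteristic zero, and $(q_*G)^{\sym_n}$ is a direct summand of $q_*G$ via the Reynolds projector; a direct summand of a vector bundle is a vector bundle. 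Together with \eqref{eq:Rdesc}, this gives that $R_\lambda(E)$ is a vector bundle in degree $n-k$.

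The main obstacle is the Cohen--Macaulay property of $\olZ_\lambda$. I expect to cite it from \cite{PvdB} rather than reprove it; everything else is formal.
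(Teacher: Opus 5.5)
Your part (i) is the paper's argument: Cohen--Macaulayness of $\olZ_\lambda$ from \cite[Prop.\ 3.4.3]{PvdB} makes the dualising complex a single torsion-free sheaf $\omega_{\olZ_\lambda}$. Then $\alpha^!E\cong\alpha^*(E\otimes\omega_{C^n}^{-1})\otimes\omega_{\olZ_\lambda}[k-n]$, and this is concentrated in degree $n-k$.

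For part (ii) you take a genuinely different route. The paper first writes $\omega_{\olZ_\lambda}\cong q^!\omega_{C_{(\lambda)}}$. Grothendieck duality along the finite map $q$ then gives $q_*\alpha^!E\cong\sHom(q_*\cE,\omega_{C_{(\lambda)}})[k-n]$, where $\cE=\alpha^*(E\otimes\omega_{C^n}^{-1})^\vee$. The paper concludes using that $q$ is \emph{flat}, again quoted from \cite[Prop.\ 3.4.3]{PvdB}, so that $q_*\cE$ is locally free. You instead push forward the maximal Cohen--Macaulay sheaf $\alpha^!E[n-k]$ and use that a maximal Cohen--Macaulay module over a regular local ring is free (Auslander--Buchsbaum).

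Your version needs only the Cohen--Macaulay property, not flatness of $q$. It is really the miracle-flatness mechanism, which is also how one would derive flatness of $q$ from Cohen--Macaulayness in the first place. The paper's version buys an explicit description of $q_*\alpha^!E$ as the shifted dual of $q_*\cE$. Both finish identically by taking invariants as a direct summand.

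One justification should be tightened. Your Auslander--Buchsbaum step needs $\olZ_\lambda$ to have pure dimension $k$, so that $\alpha^!E[n-k]$ has depth $k$ at every closed point. But \eqref{eq:qZ} only describes the image $\alpha(\olZ_\lambda)$. Moreover, $\olZ_\lambda$ is not that image, because $\alpha$ is finite but not injective in general. For $\lambda=(2,1,1)$, the points $(a+a,b)$ and $(b+b,a)$ of $C_{(\lambda)}=C^{(2)}\times C$ both map to $2a+2b$, so $(a,a,b,b)$ has two preimages in $\olZ_\lambda$. A priori a component could even map onto a proper closed subset of a translate of $C[\lambda]$. Pure dimension $k$ follows instead from the fact that the unreduced fibre product $C_{(\lambda)}\times_{C^{(n)}}C^n$ is flat over $C_{(\lambda)}$, being a base change of the flat map $\tau_n$. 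Hence every irreducible component dominates $C_{(\lambda)}$.
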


\begin{proof}
In order to keep the formulas a bit shorter, we abbreviate $Z:=\olZ_\lambda$ and $B:=C_{(\lambda)}$. By \cite[Prop.\ 3.4.3]{PvdB}, $Z$ is Cohen--Macaulay. Hence, there is a torsion-free (see e.g.\ \cite[\href{https://stacks.math.columbia.edu/tag/0AWS}{Tag 0AWS}]{stacks-project} and \cite[\href{https://stacks.math.columbia.edu/tag/0BUS}{Tag 0BUS}]{stacks-project}) dualising sheaf $\omega_Z$ , and we have
\[
 \alpha^!E\cong \alpha^*(E\otimes \omega^{-1}_{C^n})\otimes \omega_Z[\dim(Z)-\dim(C^n)]\cong \sHom(\cE, \omega_Z)[k-n]
\]
where $\cE:=\alpha^*(E\otimes \omega_{C^n}^{-1})^\vee$ is a vector bundle on $Z$.
In particular, this gives \ref{torsionfree}.

Noting that $\omega_Z\cong q^!\omega_B$, since $q$ is finite, we can apply Grothendieck duality along $q$ to obtain
\begin{equation}\label{eq:qfE}
 q_*\alpha^!E\cong q_*\sHom(\cE, q^!\omega_B)[k-n]\cong \sHom(q_*\cE, \omega_B)[k-n]\,.
\end{equation}
Since $q$ is not only finite but also flat (again by \cite[Prop.\ 3.4.3]{PvdB}), the push-forward $q_*\cE$ is again a vector bundle. Hence, \eqref{eq:qfE} gives that $q_*\alpha^! E$ is a vector bundle shifted by $[k-n]$. Since the subsheaf of invariants is always a direct summand, $(q_*\alpha^! E)^{\sym_n}$ is a vector bundle shifted by $[k-n]$ too. By \eqref{eq:Rdesc}, this finishes the proof.
\end{proof}

\begin{prop}\label{prop:complement}
 Let $\lambda=(\lambda_1\ge \lambda_2\ge \dots\ge \lambda_k)$ be a partition of $n$ with $\lambda_1>r$. Then $R_\lambda\circ I_n\cong 0$. In other words, $F_\lambda(\D(C_{(\lambda)}))$ is contained in the left-orthogonal complement of $I_n(\D(M))$.
\end{prop}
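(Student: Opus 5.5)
Since $\D(M)$ is generated by the skyscraper sheaves $\reg_t$ (up to shifts, cones and direct summands; more precisely, $R_\lambda\circ I_n$ is a Fourier--Mukai type functor, so it vanishes once it vanishes on all $\reg_t$), it suffices to show $R_\lambda(I_n(\reg_t))=R_\lambda\bigl((\cU_t)^{\boxtimes n}\bigr)\cong 0$ for every $t\in M$. Concretely, $R_\lambda\circ I_n$ is the Fourier--Mukai transform along a kernel $\cK\in \D(M\times C_{(\lambda)})$ whose derived fibre over $t$ is $R_\lambda(I_n(\reg_t))$ by base change. A kernel all of whose derived fibres over closed points of $M$ vanish is zero, by Nakayama applied to the top cohomology. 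So the plan is to prove the fibrewise vanishing.

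Fix $t$ and set $E:=\cU_t$, a stable vector bundle of rank $r$, so $E^{\boxtimes n}\in \VB_{\sym_n}(C^n)$. By \autoref{lem:VB}\ref{vb}, $R_\lambda(E^{\boxtimes n})$ is a vector bundle on $C_{(\lambda)}$ shifted by $n-k$. Hence it suffices to show that its rank is zero, or equivalently that
\[
(q_*\alpha^!E^{\boxtimes n})^{\sym_n}\cong 0\quad\text{(up to shift).}
\]
This is where \autoref{lem:componentvanish} enters. Apply it with $Z=\olZ_\lambda$, $f=q$ (finite, hence affine, and $\sym_n$-invariant), and the sheaf $\alpha^!E^{\boxtimes n}[n-k]$, which is torsion free by \autoref{lem:VB}\ref{torsionfree}. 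For $Y\subset Z$ take the component lying over $C[\lambda]$: the preimage of $C[\lambda]$ under $\alpha$, on which the canonical map is the graph of $C[\lambda]\cong C^k\to C_{(\lambda)}$. Then $\sym_n\cdot Y=Z$ by \eqref{eq:qZ}. For $W$ take the cyclic group $Z_{\lambda_1}=\langle\sigma\rangle$ generated by the first cycle $(1,\dots,\lambda_1)$ of $g_\lambda$, which acts trivially on $C[\lambda]$, hence on $Y$.

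It remains to show $\bigl((\alpha^!E^{\boxtimes n})_{\mid Y}\bigr)^{W}=0$. By the formula in the proof of \autoref{lem:VB},
\[
\alpha^!E^{\boxtimes n}\cong \alpha^*\bigl(E^{\boxtimes n}\otimes\omega_{C^n}^{-1}\bigr)\otimes\omega_Z,
\]
so we need to understand how $W$ acts on the fibres over a general point of $Y$. On the factor $\alpha^*E^{\boxtimes n}$, the fibre at a point of $C[\lambda]$ with first block coordinate $x$ contains $E(x)^{\otimes\lambda_1}$, on which $\sigma$ acts by cyclic permutation of the tensor factors. The remaining factors $\omega_{C^n}^{-1}$ and $\omega_Z$ contribute, at a general point of $Y$ (where $Z$ is smooth), some character $\chi$ of $W$. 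Generically the restriction is torsion free, so it suffices to check at a general point that the $\chi^{-1}$-isotypic part of $E(x)^{\otimes\lambda_1}$ under the cyclic permutation vanishes.

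This last step is the main obstacle. For $\chi$ a sign-type character, the relevant isotypic part should be $\wedge^{\lambda_1}E(x)$, which vanishes precisely because $\lambda_1>r=\rank E$; this matches the hypothesis exactly. But for a cyclic group of order $\lambda_1$, \emph{every} character of $Z_{\lambda_1}$ occurs in $V^{\otimes\lambda_1}$ once $\dim V\ge2$, so one must instead use a larger group. I would therefore take $W$ to be the full symmetric group $\sym_{\lambda_1}$ on the first block. It fixes $C[\lambda]$ pointwise and lies in $\sym_n$, and one needs only that $W$ fixes $Y$ pointwise, not that it centralises $g_\lambda$. The concrete computation is then to show that $\sym_{\lambda_1}$ acts on the normal directions of the small diagonal $C\subset C^{\lambda_1}$ via the reduced standard representation $\kk^{\lambda_1-1}$. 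The determinant of this representation is the sign, which, through the shift by $n-k$ and the dualising sheaf $\omega_Z$, twists the invariants of $E(x)^{\otimes\lambda_1}$ into its anti-invariants $\wedge^{\lambda_1}E(x)=0$. I expect that carefully pinning down this sign character of $\omega_Z$ (or of $\alpha^!$) along $Y$ is the delicate point. As a cross-check, the same local computation applied to $\lambda=(n)$ with $n\le r$ should give the well-known description of $\delta_n^!E^{\boxtimes n}$ in terms of $\wedge^n E\otimes\omega_C^{\,1-n}$.
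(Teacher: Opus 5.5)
Your plan is essentially the paper's proof: reduce to the skyscrapers $\reg_t$, use Lemma~\ref{lem:VB} so that vanishing over a dense open subset suffices, apply Lemma~\ref{lem:componentvanish} with a subgroup fixing $C[\lambda]$ pointwise (your $\sym_{\lambda_1}$ works as well as the paper's $\sym_\lambda=\prod_i\sym_{\lambda_i}$), and use that the equivariant normal bundle of the small diagonal in $C^{\lambda_1}$ is $\omega_C^{-1}\otimes\rho_{\lambda_1}$, whose determinant is the sign character, so the invariants become $\wedge^{\lambda_1}E\otimes\omega_C^{1-\lambda_1}=0$. The one step to tighten is the passage from a general point of $Y$ to all of $Y$, since the underived restriction of $\alpha^!E^{\boxtimes n}$ to $Y$ need not be torsion free where $Y$ meets other components of $\olZ_\lambda$; the paper avoids this, and at the same time settles your ``delicate point'', by first restricting to the open $B^\circ\subset C_{(\lambda)}$ on which $\nu_\lambda$ is injective, where $\alpha$ identifies $q^{-1}(B^\circ)$ with $\sym_n\cdot C[\lambda]^\circ\subset C^n$ and $(\alpha^!E^{\boxtimes n})_{\mid C[\lambda]^\circ}\cong(\delta_\lambda^!E^{\boxtimes n})_{\mid C[\lambda]^\circ}$.
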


\begin{proof}
Vanishing of the functor $R_\lambda\circ I_n$ can be tested on the spanning class of skyscraper sheaves $\reg_t$ for $t\in M$. Let us fix some $t\in M$ and write $U=\cU_t$ to lighten the notation. Then $I_n(\reg_t)=U^{\boxtimes n}$, so we need to show that
\begin{equation}\label{eq:vanish}
 R_\lambda(U^{\boxtimes n})\cong 0\,.
\end{equation}
By \autoref{lem:VB} with $E=U^{\boxtimes n}$, we know that $R_\lambda(U^{\boxtimes n})$ is a shifted vector bundle, so it suffices to prove its vanishing over some non-empty open subset of $C_{(\lambda)}$.

As in the proof of \autoref{lem:VB} above, we write $Z:=\olZ_\lambda$ and $B:=C_{(\lambda)}$. From the
concrete description \eqref{eq:nu} of $\nu_\lambda\colon B\to C^{(n)}$, we see that this morphism is injective on the open locus of points with pairwise distinct $x_i^j$. Hence, there is a non-empty open subset $B^\circ\subset B$ such that $\nu_{\lambda\mid B^\circ}\colon B^\circ\to C^{(n)}$ is an embedding.
We set $Z^\circ:=q^{-1}(B^\circ)$. It follows from the reduced fibre diagram \eqref{eq:Zdiag} that  $\alpha_{\mid Z^\circ}\colon Z^\circ \to C^n$ is an embedding too.

From now on, we identify $Z^\circ$ and $B^\circ$ with their images under these embeddings in $C^n$ and $C^{(n)}$, respectively.
Also, set $C[\lambda]^\circ:=C[\lambda]\cap Z^\circ$. It follows from \eqref{eq:qZ} that $Z^\circ=\sym_n\cdot C[\lambda]^\circ$.
Furthermore, the subgroup
\[
 \sym_{\lambda}=\sym_{\lambda_1}\times \dots\times \sym_{\lambda_k}\le \sym_n
\]
acts trivially on $C[\lambda]$. By \autoref{lem:VB}\ref{torsionfree}, $\alpha^!U^{\boxtimes n}$ is a sheaf concentrated in degree $n-k$. Hence, \autoref{lem:componentvanish} applies, and, together with \eqref{eq:Rdesc}, gives that it suffices to prove the vanishing
\begin{equation}\label{eq:Ccircvanish}\bigl((\alpha^!U^{\boxtimes n})_{\mid C[\lambda]^\circ}\bigr)^{\sym_\lambda}\cong 0\end{equation}
in order to obtain \eqref{eq:vanish}. Under the identification $C[\lambda]\cong C^k$, we have
\[
(\alpha^!U^{\boxtimes n})_{\mid C[\lambda]^\circ}\cong (\delta_\lambda^!U^{\boxtimes n})_{\mid C[\lambda]^\circ}
\]
where $\delta_\lambda=\prod \delta_{\lambda_i}\colon C^k\to C^n$ is the product of small diagonal embeddings as in \eqref{eq:delta}.
Hence, \eqref{eq:Ccircvanish} will be implied by the vanishing $(\delta_\lambda^!U^{\boxtimes n})^{\sym_\lambda}\cong 0$ that we prove now.
We have
\begin{equation}\label{eq:pullbackU}
\delta_\lambda^*U^{\boxtimes n}\cong U^{\otimes \lambda_1}\boxtimes \dots\boxtimes U^{\otimes \lambda_k}
\end{equation}
where the action of $\sym_\lambda=\sym_{\lambda_1}\times \dots\times\sym_{\lambda_k}$ is given by the $i$-th factor $\sym_{\lambda_i}$ of the group acting on the $i$-th factor $U^{\otimes \lambda_i}$ of the box product by permutation of the (inner) tensor factors.

 Furthermore, the normal bundle of the small diagonal $\delta_{\lambda_i}\colon C\hookrightarrow C^{\lambda_i}$ is, as an $\sym_{\lambda_i}$-equivariant bundle, given by $\omega_C^\vee\otimes \rho_{\lambda_i}$ where $\rho_{\lambda_i}$ is the $(\lambda_i-1)$-dimensional standard representation of $\sym_{\lambda_i}$; see \cite[Sect.\ 3]{Kru3}.
Hence, the normal bundle of $\delta_\lambda$ is
\[
 \cN_{\delta_\lambda}\cong (\omega_C^{-1}\otimes \rho_{\lambda_1})\boxplus \dots\boxplus (\omega_C^{-1}\otimes \rho_{\lambda_k})
\]
as an $\sym_\lambda$-equivariant bundle.
Taking the determinant gives the relative dualising bundle
\begin{equation}\label{eq:omegalambda}
 \omega_{\delta_\lambda}\cong \det\cN_{\delta_\lambda}\cong (\omega_C^{1-\lambda_1}\otimes \alt_{\lambda_1})\boxtimes \dots\boxtimes (\omega_C^{1-\lambda_k}\otimes \alt_{\lambda_k})\,,
\end{equation}
where $\alt_{\lambda_i}=\det(\rho_{\lambda_i})$ is the sign representation of $\sym_{\lambda_i}$. Noting that $\codim(\delta_\lambda)=n-k$, Grothendieck duality for regular embeddings together with \eqref{eq:pullbackU} and \eqref{eq:omegalambda} gives
\begin{align*}
\delta_\lambda^!U^{\boxtimes n}&\cong (\delta_\lambda^*U^{\boxtimes n})\otimes \omega_{\delta_\lambda}[k-n]\\&\cong \bigl(U^{\otimes \lambda_1}\otimes \alt_{\lambda_1} \otimes \omega_C^{1-\lambda_1}\bigr) \boxtimes \dots\boxtimes \bigl(U^{\otimes \lambda_k} \otimes \alt_{\lambda_k} \otimes \omega_C^{1-\lambda_k}\bigr)[k-n]\,.
\end{align*}
Taking $\sym_\lambda$-invariants gives
\[
(\delta_\lambda^!U^{\boxtimes n})^{\sym_\lambda}\cong \bigl((\wedge^{\lambda_1} U) \otimes \omega_C^{1-\lambda_1}\bigr) \boxtimes \dots\boxtimes \bigl((\wedge^{\lambda_k} U) \otimes \omega_C^{1-\lambda_k}\bigr)[k-n]
\]
which vanishes if $\lambda_1>r=\rank(U)$ because then $\wedge^{\lambda_1} U=0$.
\end{proof}

\subsection{The Semi-Orthogonal Decomposition of $M_C(2,\Lambda)$}\label{subsect:Lambdasod}
From now on, let $g=g(C)\ge 2$, and $M\coloneqq M_C(2,e)$ with $e$ odd. We denote by $J\coloneqq \Pic^e(C)$ the moduli space of degree $e$ line bundles on $C$, which is non-canonically isomorphic to $\Jac(C)$. There is a smooth and proper morphism
\[
 f\colon M\to J\quad,\quad E\mapsto \det(E)\,.
\]
We fix some $\Lambda\in J$ and denote the corresponding fibre by $M_\Lambda\coloneqq M_C(2,\Lambda)$. It is the fine moduli space of stable bundles of rank $2$ and determinant $\Lambda$ on $C$. Following previous work and conjectures by \cite{Fonarev-Kuz}, \cite{Narasimhan--derived}, \cite{Narasimhan--derivedII}, \cite{Belmans-Muk--admissible}, \cite{Lee--motives}, \cite{BGM--vboncurves}, Tevelev--Torres \cite{TT} and Tevelev \cite{Tevelev--nophantom} constructed a semi-orthogonal sequence of $\D(M_\Lambda)$ with pieces equivalent to derived categories $\D(C^{(\alpha)})$ of symmetric powers of the curve with $0\le \alpha\le g-1$. We recall the construction of the fully faithful embeddings of these pieces in the following, with the goal to extend it to a $J$-linear semi-orthogonal decomposition of $\D(M)$ later in \autoref{subsect:extendsod}.

For a non-negative integer $\alpha$, denote by $\tau_\alpha\colon C^\alpha\to C^{(\alpha)}$ the $\sym_\alpha$-quotient morphism.
For any variety $Y$ and any vector bundle $\cF$ on $C\times Y$, recall that the vector bundle $\cF^{\boxtimes_Y \alpha}$ on $C^\alpha\times Y$ has a canonical $\sym_\alpha$-linearisation; see \autoref{subsect:box}. Hence, by flatness of $\tau_\alpha$, we get on $C^{(\alpha)}\times Y$ the two associated vector bundles
\[
\cF^{(\alpha)}:=\bigl((\tau_\alpha\times \id_Y)_*(\cF^{\boxtimes_Y \alpha})\bigr)^{\sym_\alpha} \quad,\quad \cF^{(\overline \alpha)}:=\bigl((\tau_\alpha\times \id_Y)_*(\cF^{\boxtimes_Y \alpha}\otimes \alt_\alpha)\bigr)^{\sym_\alpha}\,.
\]
In \cite{TT} these bundles are denoted by $\cF^{\boxtimes \alpha}$ and $\overline \cF^{\boxtimes \alpha}$, but we changed the notation to avoid confusion with the box product in the usual sense. Note that, by convention, $\cF^{(0)}\cong \reg_Y$ under the isomorphism $C^{(0)}\times Y\cong \pt\times Y\cong Y$.

Now, let $\cE:=\cE_\Lambda$ be the universal family on $C\times M_\Lambda$. Note that $\theta:=\theta_\Lambda:=\det(\cE_x)\in \Pic(M_\Lambda)$ is independent of the point $x\in C$ because $\Ho^1(\reg_{M_\Lambda})=0$. We assume that $\cE$ is normalised in such a way that $\theta$ is an ample generator of $\Pic(M_\Lambda)\cong \IZ$. That this can always be achieved is explained in \cite[Rem.\ 2.9]{Ramanan--moduli}. We define the functors
\begin{equation}\label{eq:Q}
\begin{aligned}
 &\Psi_\alpha^i:=\FM_{\cQ_\alpha^i}\colon \D(C^{(\alpha)})\to \D(M_\Lambda)\quad,\quad \cQ_\alpha^i:=\cE^{(\alpha)}\otimes(\reg_{C^{(\alpha)}}\boxtimes \theta^{-i})\\
 &\overline \Psi_\alpha^i:=\FM_{\overline \cQ_\alpha^i}\colon \D(C^{(\alpha)})\to \D(M_\Lambda)\quad,\quad \overline \cQ_\alpha^i:=\cE^{(\overline \alpha)}\otimes(\reg_{C^{(\alpha)}}\boxtimes \theta^{-i})\,.
\end{aligned}
\end{equation}
We introduce some notation useful to formulate the next theorem. For $m\in \IZ$, we write
\begin{align*}
 \lfloor m\rfloor_e&:=2\left\lfloor \frac m2\right\rfloor=\max\{k\mid k\le m \text{ even}\}\,,\\
 \lfloor m\rfloor_o&:=2\left\lfloor \frac {m-1}2\right\rfloor+1=\max\{k\mid k\le m \text{ odd}\}\,.
 \end{align*}
\begin{theorem}[\cite{TT}, \cite{Tevelev--nophantom}]\label{thm:TTT}
For every $i\in \IZ$, the functors
\[
 \Psi_0^i,\Psi_2^i,\Psi_4^i,\dots, \Psi_{\lfloor g-1\rfloor_e}^i\quad,\quad \overline\Psi_1^i,\overline\Psi_3^i,\overline\Psi_5^i,\dots, \overline\Psi_{\lfloor g-1\rfloor_o}^i
\]
are fully faithful. Denoting their images in $\D(M_\Lambda)$ by
\[\cG_\alpha^i:=\im(\Psi_\alpha^i)\cong \D(C^{(\alpha)})\quad,\quad \overline \cG_\alpha^i:=\im(\overline \Psi_\alpha^i)\cong \D(C^{(\alpha)})\,,\] there is a semi-orthogonal decomposition
\[
 \D(M_\Lambda)=\left\langle\arraycolsep=0.2cm\def\arraystretch{2.2}\begin{array}{cc}\cG_0^1,\cG_2^2,\cG_4^3, \dots, \cG_{\lfloor g-1\rfloor_e}^{\frac 12\lfloor g-1\rfloor_e+1}, & \overline \cG_{\lfloor g-1\rfloor_o}^{\frac 12\lfloor g-1\rfloor_o+\frac12},\dots , \overline \cG_5^3,  \overline \cG_3^2,  \overline \cG_1^1, \\ \cG_0^0,\cG_2^1,\cG_4^2, \dots, \cG_{\lfloor g-2\rfloor_e}^{\frac 12\lfloor g-2\rfloor_e}, & \overline \cG_{\lfloor g-2\rfloor_o}^{\frac 12\lfloor g-2\rfloor_o-\frac12},\dots , \overline \cG_5^2,  \overline \cG_3^1,  \overline \cG_1^0  \end{array}\right\rangle \,.
\]
\end{theorem}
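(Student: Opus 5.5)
This theorem is a citation of \cite{TT} and \cite{Tevelev--nophantom}, so the plan is to reconstruct the structure of their argument rather than to derive it from the earlier results of this paper. The statement splits into two parts. The first is that the listed functors are fully faithful and the listed images are semi-orthogonal in the stated order. The second is that these images generate $\D(M_\Lambda)$.

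For the first part I would use the Bondal--Orlov criterion in the form of \autoref{lem:BO}, applied non-equivariantly. The key computation is the graded Hom spaces between the fibres
\[
\cQ^i_{\alpha,D}\cong\Bigl(\bigotimes_{x\in D}\cE_x\Bigr)\otimes\theta^{-i},
\]
where $D\in C^{(\alpha)}$. For reduced $D$ this is the tensor product of the restrictions $\cE_x$ of the universal bundle to $\{x\}\times M_\Lambda$. For the barred version the same holds with the sign twist, which matters only along the diagonals. One needs
\[
\Hom^*\bigl(\cQ^i_{\alpha,D},\cQ^{j}_{\beta,D'}\bigr)\cong\Ho^*\Bigl(M_\Lambda,\ \bigotimes_{x\in D}\cE_x^\vee\otimes\bigotimes_{y\in D'}\cE_y\otimes\theta^{i-j}\Bigr).
\]
I would reduce this, via $\cE_x^\vee\cong\cE_x\otimes\theta^{-1}$ (which holds since the rank is $2$), to cohomology of tensor products of the bundles $\cE_x$ twisted by powers of $\theta$. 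I would then compute these by the Thaddeus wall-crossing description of $M_\Lambda$ in terms of stable pairs, or by the Borel--Weil--Bott style vanishing used in \cite{TT}. The target vanishings are two. First, $\Ho^*$ should vanish whenever the pair $(\alpha,i),(\beta,j)$ violates the ordering. Second, when $\alpha=\beta$ and $i=j$, it should reduce to $\Hom^*_{C^{(\alpha)}}(\reg_D,\reg_{D'})$.

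For generation I would use Tevelev's no-phantom argument. The idea is to show that the orthogonal complement $\cA$ of the listed pieces has vanishing Hochschild homology, or equivalently trivial class in $K_0$ and vanishing additive invariants. This uses the known motive of $M_\Lambda$ from \cite{Lee--motives}: its Hodge numbers match $\bigoplus \D(C^{(\alpha)})$ taken twice, with the indices as in the statement. So $HH_*(\cA)=0$, and Tevelev's argument then upgrades this numerical vanishing to the vanishing of $\cA$ itself, using deformation to a degenerate curve or the derived Torelli theorem for symmetric powers.

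I expect the main obstacle to be the cohomology vanishing on $M_\Lambda$ for large $\alpha$ close to $g-1$, where the required range of vanishing is sharp. The second difficulty is excluding phantoms; I would treat that step as a black box from \cite{Tevelev--nophantom}.
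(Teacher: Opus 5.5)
The paper does not prove \autoref{thm:TTT}. It quotes it from \cite{TT} and \cite{Tevelev--nophantom} and uses it only as input for \autoref{thm:relFMsod} in the proof of \autoref{thm:Msod}. Deferring to those papers is therefore in line with the paper. But the argument you build around the citations is not a proof, and its fullness step relies on a mechanism that does not exist.

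Your computation $HH_*(\cA)=0$ from the Hodge numbers of $M_\Lambda$ (via \cite{Lee--motives}, additivity of Hochschild homology and HKR) is fine. However, no general principle turns vanishing of additive invariants into $\cA\cong 0$:
\begin{itemize}
\item there are non-zero admissible subcategories of derived categories of smooth projective varieties with $HH_*=0$ and even $K_0=0$ (the phantoms of Gorchinskiy--Orlov and of B\"ohning--Graf von Bothmer--Katzarkov--Sosna);
\item there are quasi-phantoms with $HH_*=0$ but $K_0\neq 0$, so your ``or equivalently trivial $K_0$'' is also false.
\end{itemize}
This is exactly why fullness is a separate theorem. One must show directly that every object orthogonal to all the blocks vanishes, and whatever geometric input achieves this in \cite{Tevelev--nophantom} is precisely what your sketch leaves unidentified. ``Deformation to a degenerate curve or derived Torelli'' does not take $HH_*(\cA)=0$ as an input that could be upgraded in the way you describe.

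For full faithfulness and semi-orthogonality, reducing to fibres via the Bondal--Orlov criterion is legitimate. But the vanishing of $\Ho^*\bigl(M_\Lambda,\bigotimes_{x\in D}\cE_x\otimes\bigotimes_{y\in D'}\cE_y\otimes\theta^{k}\bigr)$ in the required ranges is the substance of \cite{TT}. Naming Thaddeus' stable pairs or a Borel--Weil--Bott-type vanishing does not supply it.

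Moreover, the criterion must be checked at all closed points of $C^{(\alpha)}$, both for full faithfulness and for the semi-orthogonality tests, while your description of the fibres covers only reduced $D$. At non-reduced $D$ the fibre of $\cE^{(\alpha)}$ has a different shape. For $D=2x$ it is an extension $0\to\wedge^2\cE_x\to(\cE^{(2)})_{2x}\to S^2\cE_x\to 0$, whose class is induced by the first-order variation of $\cE_x$ in $x$. So these points need a separate computation, or a global computation of the kernel of $(\Psi^i_\alpha)^R\circ\Psi^i_\alpha$ and of the corresponding compositions between different blocks.
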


In particular, $\D(M_\Lambda)$ has a semi-orthogonal decomposition with two pieces isomorphic to $\D(C^{(\alpha)})$ for every $\alpha=0,\dots, g-2$ and one piece isomorphic to $\D(C^{(g-1)})$. The last block $\overline \cG_{\lfloor g-2\rfloor_o}^{\frac 12\lfloor g-2\rfloor_o-\frac12},\dots , \overline \cG_5^2,  \overline \cG_3^1,  \overline \cG_1^0$ of the semi-orthogonal decomposition has to be interpreted as being empty for $g=2$ where $\lfloor g-2\rfloor_o=-1$.

\subsection{Relative Fourier--Mukai Transforms and Semi-Orthogonal Decompositions}\label{subsect:relativeFM}

In order to extend the semi-orthogonal decomposition of \autoref{thm:TTT} from $\D(M_\Lambda)$ to $\D(M)$, we will need a criterion which allows to
test relative semi-orthogonal decompositions fibrewise.

For this, we quickly repeat some basics on relative Fourier--Mukai transforms; for details see e.g.\ \cite[Sect.\ 2.7.--2.9]{Kuz--HPD}, \cite{MR2505443}, or \cite[Sect.\ 3]{BS--descent}.
Throughout this subsection, let $X$, $Y$, and $S$ be smooth projective varieties over an algebraically closed field (here we do not need our general characteristic 0 assumption and can work over arbitrary characteristic) together with smooth morphisms $X\to S$ and $Y\to S$.
Consider the fibre diagram
\[\begin{tikzcd}
	{X\times_S Y } & Y \\
	X & S\,.
	\arrow["{p_Y}", from=1-1, to=1-2]
	\arrow["{p_X}"', from=1-1, to=2-1]
	\arrow[ from=1-2, to=2-2]
	\arrow[from=2-1, to=2-2]
\end{tikzcd}\]
For an object $\cP\in\D(X\times_S Y)$ there is the associated \emph{$S$-relative Fourier--Mukai transform}
\[
 \Phi:=\FM^S_\cP:=p_{Y*}\bigl( \cP\otimes p_X^*(\_)\bigr)\colon \D(X)\to \D(Y)\,.
\]
Every relative Fourier--Mukai transform is (isomorphic to) a special case of a usual Fourier--Mukai transform because $\FM^S_\cP\cong \FM_{i_*\cP}$ where $i\colon X\times_SY\hookrightarrow X\times Y$ is the embedding of the fibre product into the cartesian product.
In particular, every relative Fourier--Mukai transform has a left and a right adjoint; see e.g.\ \cite[Prop.\ 5.9]{Huy}.

For $s\in S$, write $j_s\colon X_s\times Y_s\hookrightarrow X\times_S Y$ for the canonical embedding. We define the induced Fourier--Mukai transform on the fibres by
\[
 \Phi_s:=\FM_{j_s^*\cP}\colon \D(X_s)\to \D(Y_s)\,.
\]
Note that our general smoothness assumptions guarantee that $j_s^*\cP$ is contained in the bounded derived category of $X_s\times Y_s$. In \cite{MR2505443}, relative Fourier--Mukai transforms and their fibres are considered without assuming smoothness of $X$ and $Y$. However, this then requires the authors to often make the additional assumption that $\cP$ is of finite homological dimension over $X$ and $Y$; see \cite[Def.\ 2.1]{MR2505443}. It seems very likely that all the statements below (in particular \autoref{thm:relFMsod}) hold if we drop the smoothness assumptions, but require relative finite homological dimension. However, in our intended application in \autoref{subsect:extendsod}, the total spaces are smooth, so we decided to avoid possible technical difficulties, and not to check whether this generalisation really holds.

\begin{lemma}\label{lem:relFM}
\begin{enumerate}
 \item\label{pushpullcomp} For every $S$-relative Fourier--Mukai transform $\Phi\colon \D(X)\to \D(Y)$ and every $s\in S$, we have isomorphisms of functors
 \[
  j_s^{Y*}\circ \Phi\cong \Phi_s\circ j_s^{X*}\quad,\quad j^Y_{s*}\circ \Phi_s\cong \Phi\circ j^X_{s*}
 \]
where $j_s^X\colon X_s\hookrightarrow X$ and $j_s^Y\colon Y_s\hookrightarrow Y$ are the embeddings of the fibres.
\item\label{composition} Let $Z$ be a smooth projective variety equipped with a smooth morphism to $S$. For any two $S$-relative Fourier--Mukai transforms $\Phi\colon \D(X)\to \D(Y)$ and $\Psi\colon \D(Y)\to \D(Z)$, their composition $\Psi\circ \Phi\colon \D(X)\to\D(Z)$ is again an $S$-relative Fourier--Mukai transform. Furthermore, for every $s\in S$, we have an isomorphism of functors
\[
 (\Psi\circ \Phi)_s\cong \Psi_s\circ \Phi_s\,.
\]
\item\label{rightrelative} For every $S$-relative Fourier--Mukai transform $\Phi\colon \D(X)\to \D(Y)$, its right adjoint $\Phi^R\colon \D(Y)\to \D(X)$ is again an $S$-relative Fourier--Mukai transform. For every $s\in S$, its fibre $(\Phi^R)_s$ is the right-adjoint of $\Phi_s$. In other words $(\Phi^R)_s\cong (\Phi_s)^R$.
\item \label{zero} A relative Fourier--Mukai transform $\Phi\colon \D(X)\to \D(Y)$ is zero if and only if all its fibres are zero:
\[
 \Phi\cong 0\quad\Longleftrightarrow \quad \Phi_s\cong 0 \quad \text{for all $s\in S$.}
\]
\end{enumerate}
\end{lemma}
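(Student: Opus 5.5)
We prove the four parts in order, working throughout with kernels on fibre products. For \ref{pushpullcomp}, consider the fibre diagram obtained by restricting $X\times_S Y$ over $s$: the square with vertices $X_s\times Y_s$, $Y_s$, $X\times_S Y$ and $Y$ is cartesian. Moreover, $p_Y$ is flat, being a base change of the smooth morphism $X\to S$. Flat base change then gives $j_s^{Y*}p_{Y*}\cong (p_{Y_s})_* j_s^*$. Combining this with the compatibility of $j_s^*$ with tensor products and with $j_s^*p_X^*\cong p_{X_s}^*j_s^{X*}$ yields the first isomorphism. For the second, we use the projection formula along $j_s$, the isomorphism $p_X^*j^X_{s*}\cong j_{s*}p_{X_s}^*$ (flat base change along $p_X$, which is flat as a base change of $Y\to S$), and then $p_{Y*}j_{s*}\cong j^Y_{s*}p_{Y_s*}$.

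For \ref{composition}, we form the triple fibre product $X\times_S Y\times_S Z$, which is smooth over $S$. The composite kernel is the usual convolution $p_{XZ*}(p_{XY}^*\cP\otimes p_{YZ}^*\cQ)$, and the standard argument (flat base change and projection formula, exactly as in the absolute case) identifies it as the kernel of $\Psi\circ\Phi$. Its restriction to the fibre over $s$ is computed by flat base change along the projection $p_{XZ}$, which is again flat. Since $(X\times_SY\times_SZ)_s=X_s\times Y_s\times Z_s$, the restriction is exactly the convolution of $j_s^*\cP$ and $j_s^*\cQ$.

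For \ref{rightrelative}, the right adjoint is given by the relative kernel $\cP^\vee\otimes p_X^*\omega_{X/S}[\dim X-\dim S]$, a relative Grothendieck duality computation using that $p_Y$ is smooth of relative dimension $\dim X-\dim S$ with $\omega_{p_Y}\cong p_X^*\omega_{X/S}$. Restricting this kernel to a fibre commutes with derived duals of perfect complexes. Furthermore, $\omega_{X/S}$ restricts to $\omega_{X_s}$ and the relative dimension equals $\dim X_s$. Hence the restriction to the fibre is precisely the standard right adjoint kernel of $\Phi_s$.

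The main point is \ref{zero}, and one direction is trivial. If $\Phi\cong 0$, then for every $s$ we get $j^Y_{s*}\Phi_s\cong\Phi j^X_{s*}=0$ by \ref{pushpullcomp}, and $j^Y_{s*}$ is conservative, so $\Phi_s=0$. For the converse, it suffices to show that the kernel $\cP$ vanishes, since a zero kernel clearly gives a zero functor. The naive attempt is to argue that $\Phi_s=0$ forces the kernel $j_s^*\cP$ to vanish. This works because a Fourier--Mukai transform between smooth projective varieties vanishes only if its kernel does: applying it to $\reg_x$ gives the derived restriction of the kernel to $\{x\}\times Y_s$, and the vanishing of all derived restrictions to points of $X_s\times Y_s$ implies that the kernel vanishes, by Nakayama's lemma applied to the top nonzero cohomology sheaf. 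In the same way, $\cP$ vanishes on $X\times_S Y$ once all its derived restrictions to closed points vanish. Every closed point of $X\times_SY$ lies in some fibre $X_s\times Y_s$, and its restriction factors through $j_s^*\cP=0$. Hence $\cP=0$. The expected obstacle is only the careful bookkeeping that the kernel of $\Phi_s$ is uniquely determined up to this point-wise test, which the argument above avoids by working directly with derived fibres at points.
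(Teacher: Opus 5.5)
Your proposal is correct and follows essentially the same route as the paper: base change for (i) and (ii), the explicit Grothendieck-duality kernel restricted fibrewise for (iii), and detecting vanishing of the kernel through derived restrictions to closed points for (iv). You write out the base-change and duality computations that the paper only cites. The one cosmetic difference is that for $\Phi\cong 0\Rightarrow\Phi_s\cong 0$ you use (i) together with conservativity of $j^Y_{s*}$, whereas the paper argues via $\cP\cong 0$.
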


The analogue of part \ref{rightrelative} for left-adjoints also holds, but we will only need it explicitly for the right-adjoints.

\begin{proof}
Parts \ref{pushpullcomp} and \ref{composition} can be found in \cite[Sect.\ 2.2]{MR2505443} and \cite[Sect.\ 1.3]{Krug--Meachan--univ}. Note that in neither reference detailed proofs are given. However, everything follows from straightforward base-change arguments.

For part \ref{rightrelative}, the right-adjoint $\Phi^R$ is given in \cite[Prop.\ 2.9]{MR2505443} as a relative Fourier--Mukai transform by explicitly naming its kernel on $Y\times_SX$. It is then easy to see that the restriction of this kernel to $Y_s\times X_s$ agrees with the kernel of the right adjoint of $\Phi_s$ as in \cite[Prop.\ 5.9]{Huy}.

For part \ref{zero}, first note that $\Phi=\FM^S_\cP\cong 0$ with $\cP\in \D(X\times_S Y)$ if and only if $\cP\cong 0$. Indeed, it is obvious that $\FM^S_0\cong 0$. In the other direction, assume that $\Phi=\FM^S_\cP\cong 0$. In particular, $\Phi(\reg_x)\cong 0$ for all $x\in X$, which implies that the derived pull-back of $\cP$ to the fibre of $p_X\colon X\times_SY\to X$ over $x$ is zero. Since these fibers cover $X\times_SY$, we get $\cP\cong 0$; compare \cite[Ex.\ 3.30]{Huy}.

Now, since also the fibres of $X\times_SY\to S$ cover $X\times_S Y$, we get the chain of equivalences
\[
\Phi\cong 0\quad\iff\quad \cP\cong 0\quad\iff\quad \cP_s\cong 0\,\,\, \forall \,s\in S \quad\iff\quad \Phi_s\cong 0\,\,\, \forall \,s\in S\,.\qedhere
\]
\end{proof}

\begin{theorem}\label{thm:relFMsod}
\begin{enumerate}
 \item An $S$-relative Fourier--Mukai transform $\Phi\colon \D(X)\to \D(Y)$ is fully faithful if and only if all fibres $\Phi_s\colon \D(X_s)\to \D(Y_s)$ for $s\in S$ are.
 \item Let $X^1,\dots, X^\ell, M$ be smooth projective varieties equipped with smooth morphisms to $S$, and let $\Phi^i\colon \D(X^i)\to \D(M)$ be fully faithful $S$-relative Fourier--Mukai transforms. For $i=1,\dots ,\ell$, write $\cC^i=\Phi^i(\D(X^i))\subset \D(M)$ and $\cC^i_s=\Phi^i_s(\D(X^i_s))\subset \D(M_s)$. Then
\begin{equation}\label{eq:Msodgeneral}
 \D(M)=\langle \cC^1,\dots ,\cC^\ell\rangle
\end{equation}
 is a semi-orthogonal decomposition if and only if
\begin{equation}\label{eq:Mssod}
 \D(M_s)=\langle \cC^1_s,\dots ,\cC^\ell_s\rangle
\end{equation}
 is a semi-orthogonal decomposition for all $s\in S$.
\end{enumerate}
\end{theorem}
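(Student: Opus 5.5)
For part (i), I would reduce full faithfulness to an adjunction statement. $\Phi$ is fully faithful if and only if the unit $\eta\colon \id_{\D(X)}\to \Phi^R\circ\Phi$ is an isomorphism, i.e.\ if and only if its cone vanishes. By \autoref{lem:relFM}\ref{composition} and \ref{rightrelative}, $\Phi^R\circ\Phi$ is an $S$-relative Fourier--Mukai transform with kernel $\cR\in\D(X\times_SX)$, and its fibres are $(\Phi^R\circ\Phi)_s\cong(\Phi_s)^R\circ\Phi_s$. The identity is also $S$-relative, with kernel $\Delta_*\reg_X$ along the relative diagonal $\Delta\colon X\hookrightarrow X\times_SX$, and its fibres are the identities of $\D(X_s)$. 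The delicate point is to realise the unit on the level of kernels, as a morphism $\Delta_*\reg_X\to\cR$ whose restriction $j_s^*$ is the kernel morphism of the unit of $\Phi_s\dashv(\Phi_s)^R$.

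For this I would use the explicit kernel of \cite[Prop.\ 2.9]{MR2505443} together with the standard description of the unit via relative Grothendieck duality along $p_X$. Compatibility with base change to $X_s\times X_s$ should then be a routine check, since all the constructions (duality along smooth morphisms, composition of kernels) commute with flat base change to fibres. The cone $\cK$ of $\Delta_*\reg_X\to\cR$ is again a kernel on $X\times_SX$, and its fibre $j_s^*\cK$ is the cone of the fibre unit. By the argument in the proof of \autoref{lem:relFM}\ref{zero}, $\cK\cong0$ if and only if $j_s^*\cK\cong0$ for all $s$, which gives (i).

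I expect the main obstacle to be this identification of units, since fully faithful is a statement about a specific natural transformation and not only about the vanishing of a functor. As a fallback I would test fullness with the Bondal--Orlov-type criterion. Using \autoref{lem:relFM}\ref{pushpullcomp}, one has
\[
\Hom^*(\Phi(\reg_x),\Phi(\reg_y))\cong \Hom^*(j^Y_{s*}\Phi_s(\reg_x),j^Y_{s*}\Phi_s(\reg_y)),
\]
and this can be compared to fibre Homs via the Koszul filtration of $j_s^{Y*}j^Y_{s*}$. However, the unit-kernel approach is cleaner, and it is the one I would pursue.

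For part (ii), semi-orthogonality $\Hom(\cC^j,\cC^i)=0$ for $j>i$ is equivalent to $(\Phi^i)^R\circ\Phi^j\cong0$. This composition is an $S$-relative transform whose fibres are $(\Phi^i_s)^R\circ\Phi^j_s$, by \autoref{lem:relFM}\ref{composition} and \ref{rightrelative}. Hence, by \autoref{lem:relFM}\ref{zero}, semi-orthogonality holds globally if and only if it holds in every fibre; the fully faithfulness of the fibres $\Phi^i_s$ comes from part (i).

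Generation remains to be compared. I would use the projection functors: for an admissible semi-orthogonal sequence, generation is equivalent to the vanishing of the functor $P=\id-\sum(\text{mutated projections})$, or inductively of the right adjoint of the inclusion of $\langle\cC^1,\dots,\cC^\ell\rangle^\perp$. Concretely, I would build by induction on $\ell$ the relative kernel of the projection onto the right orthogonal ${}^\perp$-complement, as iterated cones of the counits $\Phi^i(\Phi^i)^R\to\id$. This uses again the kernel-level counits, whose construction commutes with restriction to fibres as in part (i). Writing $P=\FM^S_\cK$, the equality \eqref{eq:Msodgeneral} holds if and only if $\cK\cong0$. By the fibre compatibility, $\cK_s$ is the kernel of the analogous projection for \eqref{eq:Mssod}, so \autoref{lem:relFM}\ref{zero} finishes the proof.
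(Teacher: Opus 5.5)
Your semi-orthogonality argument in (ii) is the paper's, except that your indices are swapped: $\Hom(\cC^j,\cC^i)=0$ is equivalent to $(\Phi^j)^R\circ\Phi^i\cong0$, not to $(\Phi^i)^R\circ\Phi^j\cong 0$. Everywhere else you take a different and heavier route.

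For (i), the paper does not give an argument; it cites \cite[Prop.~2.15]{MR2505443}. Your unit-kernel argument is a reasonable way to reprove that result, but all of its content lies in the step you call routine. You have to construct $u\colon\Delta_*\reg_X\to\cR$ (e.g.\ as the element corresponding to $\id_\cP$ via Grothendieck duality along the smooth projections). You then have to check that $u$ induces the unit of $\Phi\dashv\Phi^R$ and that $j_s^*u$ induces the unit of $\Phi_s\dashv(\Phi_s)^R$. Also, restriction to a fibre is not a flat base change. What you actually use is Tor-independence, which comes from flatness of $X\to S$ and $Y\to S$.

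For generation in (ii), the paper avoids kernel-level natural transformations entirely. Since the $\cC^i$ are admissible, fullness of \eqref{eq:Msodgeneral} means that $(\Phi^i)^R(E)\cong0$ for all $i$ forces $E\cong 0$. If all fibre decompositions are full, then $(\Phi^i_s)^R(j_s^*E)\cong j_s^*(\Phi^i)^R(E)\cong0$, so $j_s^*E\cong0$ for every $s$, hence $E\cong0$. The converse uses $(\Phi^i)^R\circ j_{s*}\cong j_{s*}\circ(\Phi^i_s)^R$ together with conservativity of $j_{s*}$.

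Your iterated cones of counits also work, but drop the formula $\id-\sum(\dots)$, which has no meaning in a triangulated category, and keep only the cone description. Even then, the argument only goes through after you have established kernel-level counits compatible with restriction to fibres, which is the same unproved input as in (i). Your route gives a stronger by-product: the projection onto $\langle\cC^1,\dots,\cC^\ell\rangle^\perp$ is itself $S$-relative, with fibres the fibrewise projections. The paper's route is shorter, since it uses only the isomorphisms of functors in \autoref{lem:relFM}.

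The same objectwise idea would let you bypass your delicate step in (i), at least in characteristic zero. One has $\Phi^R\Phi\circ j^X_{s*}\cong j^X_{s*}\circ(\Phi_s)^R\Phi_s$. By Bondal--Orlov, a Fourier--Mukai transform $\Phi$ is fully faithful if and only if $\Phi^R\Phi(\reg_x)\cong\reg_x$ for all closed points $x$. The only extra observation needed is that an object of $\D(X_s)$ whose push-forward is $\reg_x$ is itself $\reg_x$.
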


\begin{proof}
 Part (i) is \cite[Prop.\ 2.15]{MR2505443}.
It seems very likely that part (ii) can be deduced as a special case of \cite[Thm.\ A]{BS--descent}, using the conservative functor $F\colon \D(M)\to \prod_{s\in S} \D(M_s)$ given by the pull-backs to the fibres, but there are some potential technical issues to check. Hence, we give an elementary proof using \autoref{lem:relFM}.

For the semi-orthogonality, let $1\le i<j\le \ell$. Then, $\Hom(\cC^j,\cC^i)=0$ if and only if the composition $(\Phi^j)^R\circ \Phi^i$ vanishes. Combining parts \ref{composition}, \ref{rightrelative}, and \ref{zero} of \autoref{lem:relFM}, this is equivalent to $(\Phi^j_s)^R\circ \Phi^i_s\cong 0$ for every $s\in S$, which in turn is equivalent to $\Hom(\cC_s^j,\cC_s^i)=0$.

Since all the subcategories $\cC^i$ are admissible, fullness of \eqref{eq:Msodgeneral}, which means explicitly that
$\langle \cC^1,\dots ,\cC^\ell\rangle$ is all of $\D(M)$, is equivalent to the right-orthogonal $\langle \cC^1,\dots ,\cC^\ell\rangle^\perp$ vanishing. This is equivalent to the condition that, for $E\in \D(M)$, we have $(\Phi^i)^R(E)\cong 0$ for all $i=1,\dots,\ell$ only if already $E\cong 0$.

Let us now assume that the semi-orthogonal decomposition \eqref{eq:Mssod} is full for every $s\in S$, and let $E\in \D(M)$ with $(\Phi^i)^R(E)\cong 0$ for all $i=1,\dots,\ell$ be given. Then, by parts \ref{pushpullcomp} and \ref{rightrelative} of \autoref{lem:relFM} together, we have $0\cong j_s^*(\Phi^i)^R(E)\cong (\Phi^i_s)^R(j_s^* E)$ for all $s\in S$. By fullness of \eqref{eq:Mssod}, this implies $j_s^* E\cong 0$. Since the embeddings $j_s\colon M_s\hookrightarrow M$ cover $M$, this implies $E\cong 0$ as desired; compare again \cite[Ex.\ 3.30]{Huy}.

The proof that the fullness of \eqref{eq:Msodgeneral} implies the fullness of \eqref{eq:Mssod} for every $s\in S$ is very similar using the compatibility with push-forwards of \autoref{lem:relFM}\ref{pushpullcomp} instead of the compatibility with pull-backs.
\end{proof}

\begin{remark}\label{rem:reltrick}
Let still $X$, $Y$, and $S$ be smooth projective varieties and $f\colon Y\to S$ a smooth morphism. Even if no morphism $X\to S$ is given,
by a simple trick, we can turn any $\cP\in \D(X\times Y)$ into a kernel of an $S$-relative Fourier--Mukai transform. Namely, there is a canonical isomorphism
\[
 X\times Y\xrightarrow\cong (X\times S)\times_S Y\quad,\quad (x,y)\mapsto (x,f(y),y)\,.
\]
This allows us to consider $\cP$ as an object in $\D\bigl((X\times S)\times_S Y\bigr)$ and define the $S$-relative Fourier--Mukai transform
\[
\Phi:=\FM_{\cP}^S\colon \D(X\times S)\to \D(Y)\,.
\]
Since the diagram
\[\begin{tikzcd}
	{X\times Y_s} && {(X\times S)_s\times Y_s} \\
	{X\times Y} && {(X\times S)\times_S Y}
	\arrow["\cong", from=1-1, to=1-3]
	\arrow["{\iota_s}"', hook, from=1-1, to=2-1]
	\arrow["{j_s}", hook, from=1-3, to=2-3]
	\arrow["\cong", from=2-1, to=2-3]
\end{tikzcd}\]
where the vertical arrows are the canonical embeddings commutes for every $s\in S$, the fibers of $\Phi$ are given by
\[
 \Phi_s\cong \FM_{\cP_s}\colon \D(X)\to \D(Y_s)\quad\text{where}\quad \cP_s:=\iota_s^*\cP\,.
\]
\end{remark}

\subsection{Extension to a Semi-Orthogonal Decomposition of $M_C(2,e)$}\label{subsect:extendsod}

Let again $M=M_C(2,e)$ with $e$ odd. The universal family $\cU\in \VB(C\times M)$ is only unique up to twist by line bundles on $M$. We have to choose it carefully to guarantee that its restrictions to the fibres $M_\Lambda$ of $f\colon M\to J$ satisfy the assumptions of \autoref{thm:TTT} as laid out in \autoref{subsect:Lambdasod}.

\begin{lemma}\label{lem:Uchoice}
 The universal family $\cU$ can be chosen such that
 $\det \cU_{\mid \{x\}\times M_\Lambda}\cong \theta_\Lambda$ for every $x\in C$ and every $\Lambda\in J$. In other words, $\det \cU_{\mid \{x\}\times M_\Lambda}$ is the unique ample generator of $\Pic(M_\Lambda)$.
\end{lemma}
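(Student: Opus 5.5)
Start from an arbitrary universal family $\cU_0$ on $C\times M$; any other universal family is $\cU_0\otimes \pr_M^*L$ for some $L\in\Pic(M)$. Fix a point $x_0\in C$ and set $\cL:=\det(\cU_0|_{\{x_0\}\times M})\in\Pic(M)$. First, for fixed $\Lambda$, the restriction $\det(\cU_0|_{\{x\}\times M_\Lambda})$ does not depend on $x\in C$. The family $x\mapsto \det(\cU_0|_{\{x\}\times M_\Lambda})$ is a family of line bundles on $M_\Lambda$ parametrised by the connected curve $C$. Since $\Ho^1(\reg_{M_\Lambda})=0$, the Picard scheme of $M_\Lambda$ is discrete, so this family is constant. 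The same argument on all of $M$ shows that $\det(\cU_0|_{\{x\}\times M})$ is constant in $x$ up to an algebraically equivalent twist. Restricted to each fibre $M_\Lambda$, however, that twist becomes trivial, so it suffices to work with $x=x_0$.

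Second, determine the restriction $\cL|_{M_\Lambda}\cong\theta_\Lambda^{a}$. The degree $a$ is constant in $\Lambda$: the fibres $M_\Lambda$ of the smooth proper morphism $f$ are all isomorphic (via tensoring with a line bundle of degree $0$, which pulls back along $J$ after an étale base change), and $\Pic(M_\Lambda)\cong\IZ$ varies locally constantly in the smooth family. Twisting by $\pr_M^*L$ changes $\det$ by $L^{2}$, since the rank is $2$. So we need $L|_{M_\Lambda}\cong\theta^{(1-a)/2}$, which requires $a$ to be odd. We get this parity from the fact that a universal family exists on $M_\Lambda$ normalised with determinant $\theta_\Lambda$ (\cite[Rem.\ 2.9]{Ramanan--moduli}). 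That family differs from $\cU_0|_{C\times M_\Lambda}$ by a twist by some line bundle on $M_\Lambda$, which changes $a$ by an even number, so $a$ is odd.

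Third, produce a global line bundle $L$ on $M$ with $L|_{M_\Lambda}\cong\theta_\Lambda^{b}$ for all $\Lambda$, where $b=(1-a)/2$. The natural candidate is $L:=\cL^{b}$ itself: it restricts to $\theta^{ab}$, which does not in general equal $\theta^{b}$. A better choice is a relative ample generator, for example $\det$ of the relative Fourier--Mukai image or the relative theta bundle. Alternatively, note that $\cL$ restricts to $\theta^{a}$ with $a$ odd, and find a line bundle $\theta_M$ on $M$ restricting fibrewise to $\theta_\Lambda$. Such a $\theta_M$ can be built as $\det(\pr_{M*}\cU_0^{\vee}\otimes\cdots)$ (determinant of cohomology) suitably normalised, or by pulling back the theta bundle along the finite étale cover $M_\Lambda\times\Jac(C)\to M$, $(E,\xi)\mapsto E\otimes\xi$, and descending. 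With $\theta_M$ in hand, set $L:=\theta_M^{(1-a)/2}$ and $\cU:=\cU_0\otimes\pr_M^*L$. Then $\det\cU|_{\{x\}\times M_\Lambda}\cong\theta_\Lambda^{a}\otimes\theta_\Lambda^{1-a}=\theta_\Lambda$.

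The main obstacle is this third step: constructing a line bundle on $M$ that restricts to the generator $\theta_\Lambda$ on every fibre. I would first try the determinant-of-cohomology construction, choosing a twist so that the relative Euler characteristic is suitable, and compute its fibrewise degree by comparing with Ramanan's normalisation. If that degree is off by a power of $\cL$, I would correct it by a suitable combination $\cL^{u}\otimes(\text{det of cohomology})^{v}$ with $ua+vc=1$, which is possible whenever $\gcd(a,c)=1$.
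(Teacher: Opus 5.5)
Your first two steps are essentially fine. The constancy of $a$ across fibres is made rigorous by the \'etale map $\Jac(C)\times M_{\Lambda_0}\to M$, $(L,E)\mapsto L\otimes E$, together with $\Ho^1(\reg_{M_{\Lambda_0}})=0$, and that argument works for any line bundle on $M$. The parity argument for $a$ is correct.

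The genuine gap is your third step. You flag it yourself as the main obstacle, but then only list attempts and never complete one. You never produce a line bundle on $M$ whose restriction to the fibres is $\theta_\Lambda^{(1-a)/2}$; by the constancy argument, doing this on a single fibre would suffice. Your candidates are not carried through:
\begin{itemize}
\item The determinant-of-cohomology bundle has an unknown fibrewise degree $c$, and your B\'ezout fallback needs $\gcd(a,c)=1$, which you do not establish. Showing that a suitable determinant bundle restricts to the generator is not a routine comparison with Ramanan's normalisation; it is essentially what Drezet--Narasimhan prove.
\item Descending $\pr^*\theta_{\Lambda_0}$ along the \'etale cover requires a linearisation for the deck group $\Jac(C)[2]$. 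This is not automatic, and you do not construct one.
\end{itemize}
So the argument breaks exactly where a genuinely new input is needed.

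The paper gets this input from \cite[Thm.\ B]{DN}: the restriction $\Pic(M)\to\Pic(M_{\Lambda_0})$ is surjective. One can therefore twist $\cU$ so that $\cU_{\mid C\times M_{\Lambda_0}}$ is Ramanan's normalised family, which also makes your parity discussion unnecessary. The \'etale map $u$ and $\Ho^1(\reg_{M_{\Lambda_0}})=0$ then propagate the normalisation to every $M_\Lambda$, as in your step two.

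Alternatively, you could close the gap inside your own framework with $c=2$. Since $f$ is smooth, $\omega_M$ restricts to $\omega_{M_\Lambda}$, and $\omega_{M_\Lambda}\cong\theta_\Lambda^{-2}$ because $M_\Lambda$ is Fano of index $2$. As $a$ is odd, choose integers $m_1,m_2$ with $m_1a+2m_2=1$; then $\mathcal{L}^{m_1}\otimes\omega_M^{-m_2}$ restricts to $\theta_\Lambda$ on every fibre. This still rests on a nontrivial known fact, namely the index of $M_\Lambda$.
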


\begin{proof}
 Let us fix some $\Lambda_0\in J$ and write $M_0:=M_{\Lambda_0}$. As pointed out in \autoref{subsect:Lambdasod}, there is a choice of the universal family $\cE_0\in \VB(C\times M_0)$ such that $\det \cE_{0\mid x\times M_0}\cong \theta_0$ is the ample generator of $\Pic(M_0)$.

By the universal property of $M$, starting with any choice of a universal family $\cU$, we have that $\cU_{\mid C\times M_0}$ differs from $\cE_0$ only by twist by a line bundle on $M_0$. Since the restriction map $\Pic(M)\to \Pic(M_0)$ is surjective \cite[Thm.\ B]{DN}, we can change the choice of $\cU$ if necessary to achieve
$\cU_{\mid C\times M_0}\cong \cE_0$. Hence, $\det \cU_{\mid \{x\}\times M_0}\cong \theta_0$, and we need to show that the same holds for $\Lambda_0$ replaced by any other degree $e$ line bundle $\Lambda$.
For this, we consider the degree $2^{2g}$ \'etale morphism
\[
 u\colon \Jac(C)\times M_0\to M\quad,\quad (L,E)\mapsto L\otimes E\,;
\]
see e.g.\ \cite[Sect.\ 2]{MR1881021}. Its restriction to $\{\reg_C\}\times M_0$ is a closed embedding with image $M_0$. Hence, fixing some $x\in C$ and writing $\Theta_x:=\det(\cU_x)\in \Pic(M)$ under the identification $\{x\}\times M=M$, we have
that $(u^*\Theta_x)_{\mid \{0\}\times M_0}\cong \theta_0$ is the unique ample generator of $\Pic(M_0)$. Now for any degree $e$ line bundle $\Lambda$ on $C$, there is an $L\in \Jac(C)$ with $\Lambda\cong L^2\otimes \Lambda_0$. The restriction of $u$ to $\{L\}\times M_0$ is a closed embedding with image $M_\Lambda$. Since $\Ho^1(\reg_{M_0})=0$, the line bundle $u^*\Theta_x$ does not vary along the fibres over $\Jac(C)$. Hence, $(u^*\Theta_x)_{\mid \{L\}\times M_0}\cong \theta_0$, which implies that $\Theta_{x \mid M_\Lambda}$ is the unique ample generator of $\Pic(M_\Lambda)$ for every $\Lambda\in J$.
\end{proof}

Let now $\cU\in \VB(C\times M)$ be the universal family as in \autoref{lem:Uchoice}, and write \[\Theta:=\Theta_x:=\det(\cU_x)\in \Pic(M)\] as in the proof above. Note that different choices of $x$ might lead to the resulting $\Theta_x$ differing by the pull-back of a line bundle on $J$ along $f\colon M\to J$. However, for any choice, the restriction of $\Theta$ to $M_\Lambda$ will be the unique ample generator of $\Pic(M_\Lambda)$ for every $\Lambda\in J$.

For integers $\alpha$ and $i$ with $\alpha\ge 0$, we consider the vector bundles
\begin{equation}\label{eq:R}
 \cR_\alpha^i=\cU^{(\alpha)}\otimes \pr_M^*\Theta^{-i} \quad,\quad \overline \cR_\alpha^i=\cU^{(\overline \alpha)}\otimes \pr_M^*\Theta^{-i}
\end{equation}
on $C^{(\alpha)}\times M$. As explained in \autoref{rem:reltrick}, using the canonical isomorphism \[C^{(\alpha)}\times M\cong (C^{(\alpha)}\times J)\times_J M\,,\] we can consider them as kernels of the $J$-relative Fourier--Mukai transforms
\[
 \Phi_\alpha^i:=\FM^J_{\cR_\alpha^i} \colon \D(C^{(\alpha)}\times J)\to \D(M)\quad,\quad \overline \Phi_\alpha^i:=\FM^J_{\overline \cR_\alpha^i} \colon \D(C^{(\alpha)}\times J)\to \D(M)
\]
The kernels $\cR_\alpha^i$ look very similar to the $\cU^{\lhd n}$ used to define $I_n$. In particular, they also involve box products of $\cU$. Note, however, that in $\cR_\alpha^i$ we push down to the quotient variety and take invariants, while $\cU^{\lhd n}$ stays on the cartesian product. Hence, as an $\sym_n$-equivariant object, $\cU^{\lhd n}$ can be regarded as a sheaf on the quotient \emph{stack}, not the quotient variety.
Furthermore, the associated Fourier--Mukai transforms that we consider go in different directions. While $I_n=\FM_{\cU^{\lhd n}}$ has $\D(M)$ as the source,
$\Phi_\alpha^i=\FM^J_{\cR_\alpha^i}$ has $\D(M)$ as the target.

\begin{theorem}\label{thm:Msod}
For every $i\in \IZ$, the functors
\[
 \Phi_0^i,\Phi_2^i,\Phi_4^i,\dots, \Phi_{\lfloor g-1\rfloor_e}^i\quad,\quad \overline\Phi_1^i,\overline\Phi_3^i,\overline\Phi_5^i,\dots, \overline\Phi_{\lfloor g-1\rfloor_o}^i
\]
are fully faithful. Denoting their images in $\D(M)$ by
\[\cC_\alpha^i:=\im(\Phi_\alpha^i)\cong \D(C^{(\alpha)}\times J)\quad,\quad \overline \cC_\alpha^i:=\im(\overline \Phi_\alpha^i)\cong \D(C^{(\alpha)}\times J)\,,\] there is a semi-orthogonal decomposition
\begin{equation}\label{eq:Msod}
 \D(M)=\left\langle\arraycolsep=0.2cm\def\arraystretch{2.2}\begin{array}{cc}\cC_0^1,\cC_2^2,\cC_4^3, \dots, \cC_{\lfloor g-1\rfloor_e}^{\frac 12\lfloor g-1\rfloor_e+1}, & \overline \cC_{\lfloor g-1\rfloor_o}^{\frac 12\lfloor g-1\rfloor_o+\frac12},\dots , \overline \cC_5^3,  \overline \cC_3^2,  \overline \cC_1^1, \\ \cC_0^0,\cC_2^1,\cC_4^2, \dots, \cC_{\lfloor g-2\rfloor_e}^{\frac 12\lfloor g-2\rfloor_e}, & \overline \cC_{\lfloor g-2\rfloor_o}^{\frac 12\lfloor g-2\rfloor_o-\frac12},\dots , \overline \cC_5^2,  \overline \cC_3^1,  \overline \cC_1^0  \end{array}\right\rangle\,.
\end{equation}
\end{theorem}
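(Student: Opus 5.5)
The plan is to deduce \autoref{thm:Msod} from \autoref{thm:TTT} by means of the fibrewise criterion \autoref{thm:relFMsod}, applied to $S=J$. Here $X^i=C^{(\alpha)}\times J$ with its projection to $J$, and $M$ carries the smooth morphism $f\colon M\to J$. All spaces involved are smooth projective and both structure maps are smooth, so the hypotheses of \autoref{subsect:relativeFM} are met.

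The first step is to identify the fibres of $\Phi_\alpha^i$ and $\overline\Phi_\alpha^i$ over a point $\Lambda\in J$. By \autoref{rem:reltrick}, the fibre is $\FM_{(\cR_\alpha^i)_\Lambda}\colon \D(C^{(\alpha)})\to \D(M_\Lambda)$, where $(\cR_\alpha^i)_\Lambda$ is the restriction of $\cR_\alpha^i$ to $C^{(\alpha)}\times M_\Lambda$. We then need
\[
(\cR_\alpha^i)_\Lambda\cong \cQ_\alpha^i\quad\text{and}\quad (\overline\cR_\alpha^i)_\Lambda\cong \overline\cQ_\alpha^i ,
\]
where $\cQ$ is built from $\cE_\Lambda:=\cU_{\mid C\times M_\Lambda}$ and $\theta_\Lambda$. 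Two compatibilities are required for this.
\begin{enumerate}
\item The construction $\cF\mapsto \cF^{(\alpha)}$ (respectively $\cF^{(\overline\alpha)}$) commutes with the base change $M_\Lambda\hookrightarrow M$. Relative box products clearly commute with pull-back. For $\tau_\alpha\times\id$, push-forward commutes with pull-back by flat base change; alternatively, since $\tau_\alpha$ is finite, the underived restriction of the pushed-forward bundle is just the push-forward of the restriction. Taking $\sym_\alpha$-invariants is exact in characteristic zero and is a direct summand, so it also commutes with restriction. Since everything is a vector bundle, derived and underived restrictions agree.
\item $\Theta_{\mid M_\Lambda}\cong\theta_\Lambda$. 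This holds by \autoref{lem:Uchoice}. Moreover, $\cU_{\mid C\times M_\Lambda}$ is a universal family on $M_\Lambda$ normalised exactly as required in \autoref{subsect:Lambdasod}.
\end{enumerate}
Hence $(\Phi_\alpha^i)_\Lambda\cong\Psi_\alpha^i$ and $(\overline\Phi_\alpha^i)_\Lambda\cong\overline\Psi_\alpha^i$.

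With these identifications, \autoref{thm:relFMsod}(i) together with the fully faithfulness part of \autoref{thm:TTT} gives that all the listed $\Phi_\alpha^i$ and $\overline\Phi_\alpha^i$ are fully faithful. The images are then $\cC_\alpha^i\cong\D(C^{(\alpha)}\times J)$. Under the fibre identifications, the fibrewise images $(\cC_\alpha^i)_\Lambda$ are exactly $\cG_\alpha^i$, and likewise for the barred versions. Since the ordering in \eqref{eq:Msod} is term by term the same as in \autoref{thm:TTT}, the semi-orthogonal decomposition of \autoref{thm:TTT} holds for every $\Lambda\in J$. \autoref{thm:relFMsod}(ii) then yields \eqref{eq:Msod}.

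The main obstacle is the base change compatibility in item (i) above. Beyond that, the hardest point is making sure that the normalisation of the universal family on each fibre agrees with the one under which \autoref{thm:TTT} is stated, so that the twist indices $i$ match. This is exactly what \autoref{lem:Uchoice} provides. One should also observe that the semi-orthogonal decomposition of \autoref{thm:TTT} does not depend on which universal family normalised by $\theta_\Lambda$ is chosen. The remaining steps are formal applications of the results already established.
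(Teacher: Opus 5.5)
Your proposal is correct and follows the same route as the paper: restrict the kernels $\cR_\alpha^i,\overline\cR_\alpha^i$ to $C^{(\alpha)}\times M_\Lambda$, identify them with $\cQ_\alpha^i,\overline\cQ_\alpha^i$ using the normalisation from \autoref{lem:Uchoice}, and then apply both parts of \autoref{thm:relFMsod} together with \autoref{thm:TTT}. The paper states this identification in a single sentence, whereas you also spell out the base-change and invariants compatibilities behind it.
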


In particular, $\D(M)$ has a semi-orthogonal decomposition with two pieces isomorphic to $\D(C^{(\alpha)}\times J)$ for every $\alpha=0,\dots, g-2$ and one piece isomorphic to $\D(C^{(g-1)}\times J)$.

\begin{proof}
For every $\Lambda\in J$, the restrictions of the kernels $\cR_\alpha^i$ and $\overline\cR_\alpha^i$ in \eqref{eq:R} to $C^{(\alpha)}\times M_\Lambda$ are the kernels $\cQ_\alpha^i$ and $\overline\cQ_\alpha^i$ in \eqref{eq:Q} which define the fully faithful functors and the semi-orthogonal decomposition of \autoref{thm:TTT}. Hence, \autoref{thm:relFMsod} applies to give the assertion.
\end{proof}

\subsection{Possible Connection Between the Semi--Orthogonal Decompositions}\label{subsect:possibleconnection}

We will also say a few words on the case of higher rank $r> 2$ later in this subsection, but for now let still $M:=M_C(2,e)$ and $n\ge d=4(g-1)+1$.
There is some hope that the embedding $I_n\colon \D(M)\hookrightarrow \D_{\sym_n}(C^n)$ of \autoref{prop:modulicurves} can be used to give an alternative proof of \autoref{thm:Msod}, and then, looking at the fibres over $J$, also of \autoref{thm:TTT}. Besides pure curiosity, this would be desirable because the  proof of \autoref{thm:TTT}, as given in \cite{TT} and \cite{Tevelev--nophantom}, is technically very difficult.

To simplify the discussion, let us choose $n$ minimal, which means $n=d=4(g-1)+1$. In \autoref{prop:complement}, we already proved that all parts
$\D(C_{(\lambda)})$ of the semi--orthogonal decomposition of $\D_{\sym_d}(C^d)$ of Polishchuk and van den Bergh \cite{PvdB} with $\lambda_1>2$ are contained in the left-orthogonal complement of $I_d(\D(M))$. Hence, $I_d(\D(M))$ is contained in the part of the semi-orthogonal decomposition of
Polishchuk and van den Bergh spanned by $\D(C_{(\lambda)})$ with $\lambda=1^{d-2\alpha}\,2^\alpha$ for $\alpha=0,1,\dots, 2(g-1)$.

Note that for every piece of the semi-orthogonal decomposition \eqref{eq:Msod} of $\D(M)$, equivalent admissible subcategories of these $\D(C_{(\lambda)})$ exist. Indeed, all pieces of \eqref{eq:Msod} are equivalent to $\D(C^{(\alpha)}\times J)$ with $\alpha\le g-1$. Furthermore, for
$\lambda=1^{d-2\alpha}\,2^\alpha$, we have $C_{(\lambda)}=C^{(\alpha)}\times C^{(d-2\alpha)}$. For $\alpha\le g-1$, we have
\[
 d-2\alpha=4(g-1)+1-2\alpha\ge 2g-1\,.
\]
Hence, $d-2\alpha$ is in the range where the Abel-Jacobi map $\AJ_{d-2\alpha}\colon C^{(d-2\alpha)}\to J$ is a $\IP$-bundle. By \cite[Sect.\ 2]{Orlov--bundles}, this gives a decomposition of $\D(C^{(d-2\alpha)})$ into $d-2\alpha-g+1$ pieces equivalent to $\D(J)$. Hence, $\D(C_{(\lambda)})\cong \D(C^{(\alpha)}\times C^{(d-2\alpha)})$ contains as many pieces equivalent to $\D(C^{(\alpha)}\times J)$.

Unfortunately, after some tests, it does not appear that any of the obvious ways to realise these pieces in $\D(C_{(\lambda)})$, namely as base changes of some line bundle twist of $\AJ^*_{d-2\alpha}(\D(J))$, gives pieces of $F_\lambda(\D(C_{(\lambda)}))\subset \D_{\sym_d}(C^d)$ which are contained in $I_d(\D(M))$. Hence, deducing \autoref{thm:Msod} from \autoref{thm:PvdB} and the embedding $I_d\colon \D(M)\hookrightarrow\D_{\sym_d}(C^d)$ will probably not be easy either.

For higher rank, while there are no complete results as in \cite{TT} and \cite{Tevelev--nophantom} for $r=2$, there still is a great body of work on semi--orthogonal decompositions of $\D\bigl(M_C(r,\Lambda)\bigr)$ for $r>2$, see \cite{Belmans-Muk--admissible}, \cite{Gomez-Lee--motivic}, \cite{Lee-Moon--acm}, \cite{Lee-Moon--sym}, \cite{Xu--modulisod}.
Since we have the fully faithful functors $I_n\colon \D(M_C(r,e))\hookrightarrow \D_{\sym_n}(C^n)$ of \autoref{prop:modulicurves} also for higher $r$, a similar approach to the one alluded to above could yield new proofs of the known results, and potentially even new results, for $r>2$.

\section{Proof of Necessity for Full Faithfulness}\label{sect:necessary}

As before, let $M$ and $X$ be smooth projective varieties with $d:=\dim M$. Furthermore, $\cU\in \D(M\times X)$ is an object, $\cU^{\lhd n}\in \D_{\sym_n}(M\times X^n)$ the box product over $M$, and \[I_n=\FM_{\cU^{\lhd n}}\colon \D(M)\to \D_{\sym_n}(X^n)\,.
\]
\begin{theorem}\label{thm:exactcriterion}
Consider the following subset of the positive integers:
\[
 \cJ:=\bigl\{ n\in \IZ\mid n\ge 2\text{ and } I_n\colon \D(M)\to \D_{\sym_n}(X^n) \text{ is fully faithful }\bigr\}
 \]
The following three conditions are equivalent:
\begin{enumerate}
 \item\label{i} the set $\cJ$ is non-empty,
\item we have $\cJ=\bigl[\max\{d,2\},\infty\bigr)$,
\item the graded Hom spaces between the fibers of $\cU$ satisfy
\begin{equation}\label{eq:exactcond}
 \Hom^*(\cU_s,\cU_t)\cong \begin{cases}
                          \kk[0]\oplus \kk^{\oplus d}[-1]\quad&\text{if $s=t$,} \\
                          \kk^{\oplus d-1}[-1]\quad&\text{if $s\neq t$.}
                          \end{cases}
\end{equation}
\end{enumerate}
\end{theorem}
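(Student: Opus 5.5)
The implications (iii)$\Rightarrow$(ii)$\Rightarrow$(i) are quick. (iii) gives full faithfulness for all $n\ge d$ by \autoref{thm:mainintro}, hence $[\max\{d,2\},\infty)\subset\cJ$. For the reverse inclusion, note that if $2\le n<d$ and (iii) holds, then \autoref{lem:symKun} and \autoref{rem:graded} give
\[
\Hom^*_{\sym_n}\bigl((\cU_s)^{\boxtimes n},(\cU_t)^{\boxtimes n}\bigr)\cong\wedge^n(\kk^{d-1})[-n]\neq 0
\]
for $s\neq t$. This violates \autoref{lem:BO}\ref{cond:ortho}, so $n\notin\cJ$. (Here we need $M$ to have two distinct points, i.e.\ $d\ge1$; if $d=0$ there is nothing to exclude.) Thus (iii)$\Rightarrow$(ii), and (ii)$\Rightarrow$(i) is trivial. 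The real content is (i)$\Rightarrow$(iii).

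So fix $n\ge2$ with $I_n$ fully faithful, and write $V_{s,t}:=\Hom^*(\cU_s,\cU_t)$. By \autoref{lem:BO} and \autoref{lem:symKun}:
\begin{itemize}
\item $S^nV_{s,t}=0$ for $s\neq t$;
\item $S^nV_{t,t}$ has $\kk$ in degree $0$ and vanishes outside $[0,d]$.
\end{itemize}
First analyse the off-diagonal condition. Write $V=V^{\mathsf{even}}\oplus V^{\mathsf{odd}}$. If $V^{\mathsf{even}}\neq0$, then $S^nV\supset S^nV^{\mathsf{even}}\neq0$. Hence $V_{s,t}$ is concentrated in odd degrees, and $S^nV_{s,t}=\wedge^n$ of its underlying space. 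So $\dim V_{s,t}\le n-1$ and $\chi(V_{s,t})=-\dim V_{s,t}$.

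Next analyse the diagonal. From $S^nV_{t,t}$ having one-dimensional degree $0$ part, I would argue as follows.
\begin{itemize}
\item $V^0_{t,t}$ is one-dimensional: $S^n$ of it embeds in degree $0$.
\item There are no negative degrees: a class in degree $-m<0$, multiplied with suitable powers of $\id$ and of a degree-$m$ class (if present), or just taken as a single power, produces something in negative degree or an extra degree-$0$ class. More simply, $\id^{n-1}\cdot v$ has degree $-m<0$, which is forbidden.
\item Similarly, $\id^{n-1}\cdot v$ for $v$ in degree $m$ shows $V_{t,t}$ lives in degrees $[0,d]$.
\item Even degrees $2m>0$ must be excluded. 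I would use the top power: $v^n$ has degree $2mn>d$ whenever $n\ge2$ and $2m\ge d$. That does not suffice in general, so instead I expect to combine with Euler characteristic and the off-diagonal information.
\end{itemize}

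The key step, and the main obstacle, is to pin down $V_{t,t}$ and $V_{s,t}$ exactly. My plan is to use the object $\cA$ from the proof of \autoref{prop:surfcondition} on $M\times M$, with fibres $V_{s,t}$, together with \autoref{lem:Eulerconst}; this gives that $\chi(V_{s,t})$ is constant. The degree-$0$ part of $V_{t,t}$ is $\kk$, hence $\chi(V_{t,t})=1-\dim V^{1}_{t,t}+\dots$. I would exploit the restriction of $\cA$ to the diagonal. $\cA$ is supported everywhere with odd-only fibres off the diagonal, and the locus where the fibre has a given cohomology is constructible. Semicontinuity for $\cA$ (perfect, locally a complex of bundles) then forces the off-diagonal odd dimension to be generic. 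It also forces the diagonal fibre, by upper semicontinuity of each $\dim\Ho^i$, to have $\dim\Ho^i(V_{t,t})\ge\dim\Ho^i(V_{s,t})$ for generic $s$.

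To get exactly $d$ in degree $1$, I would use that full faithfulness of $I_n$ identifies $\Hom^1_{\sym_n}(\cW_t,\cW_t)\cong\Ext^1(\reg_t,\reg_t)\cong T_tM\cong\kk^d$. Indeed, full faithfulness makes $\Hom^*_{\sym_n}(\cW_t,\cW_t)\cong\wedge^*\kk^d$, not merely bounded. Comparing with $S^nV_{t,t}$, whose degree-$1$ part is $V^1_{t,t}$ (times $\id^{n-1}$), gives $V^1_{t,t}\cong\kk^d$. Higher pieces $V^{m}_{t,t}$ for $m\ge2$ also appear as $\id^{n-1}v$ in degree $m$. Matching dimensions with $\wedge^m\kk^d$ against the contribution $S^m$-type products of $V^1$ already present ($\wedge^m V^1$ when $m\le n$) shows $V^m_{t,t}=0$ for $2\le m\le n$. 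For larger $m$, the same count in degrees beyond $n$ applies (note $\wedge^{>n}$ products don't occur, so I need to handle $n<d$ carefully; this gives $d\le n$ as a byproduct). Hence $V_{t,t}=\kk\oplus\kk^d[-1]$ and $\chi=1-d$.

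Finally, constancy of $\chi$ gives $\dim V_{s,t}=d-1$ off the diagonal. Semicontinuity of the $\Ho^i$ of $\cA$, together with the fact that the diagonal fibres only have degree $0$ and $1$ cohomology, confines the off-diagonal fibres to degrees $\le1$ as well; I expect this needs care, perhaps via $\Ho^i$ vanishing on an open set containing the diagonal and then applying the same argument symmetrically. Combined with odd-concentration, this gives $V_{s,t}\cong\kk^{d-1}[-1]$, which is (iii).
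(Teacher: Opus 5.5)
Your overall strategy is the paper's: symmetric K\"unneth on and off the diagonal, odd concentration of $V_{s,t}$ for $s\neq t$, constancy of $\chi$ via $\cA$ and \autoref{lem:Eulerconst}, then semicontinuity. Your direct argument for (iii)$\Rightarrow$(ii) is fine. However, the two steps you flag as needing care are exactly where the content lies, and as written neither goes through.

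\textbf{Diagonal, degrees above $n$.} Suppose you know $V^{0}_{t,t}=\kk$, $V^1_{t,t}\cong\kk^d$ and $V^m_{t,t}=0$ for $2\le m\le n$. In a degree $m>n$ the count does \emph{not} work the same way. A product of $m$ degree-one classes needs $m$ tensor factors, so $\wedge^mV^1$ does not occur in $S^nV_{t,t}$. The degree-$m$ comparison then only gives $\dim V^m_{t,t}=\binom dm$ (for the first such $m$), which is no contradiction. So nothing you wrote excludes $n<d$ with extra classes in degrees $n+1,\dots,d$, and neither $V_{t,t}\cong\kk\oplus\kk^d[-1]$ nor $n\ge d$ is established.

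The paper closes this with the algebra structure. The chain $\wedge^*T_tM\cong\Hom^*(\reg_t,\reg_t)\cong\Hom^*_{\sym_n}(\cW_t,\cW_t)\cong S^n\Hom^*(\cU_t,\cU_t)$ consists of isomorphisms of graded algebras. Since $V^2_{t,t}=0$, degree-one classes of $\Hom^*(\cU_t,\cU_t)$ multiply to zero. Hence the subalgebra of $S^n\Hom^*(\cU_t,\cU_t)$ generated in degree one is $S^n(\kk\oplus\kk^d[-1])\cong\bigoplus_{i\le\min(n,d)}\wedge^i\kk^d$. Because $\wedge^*T_tM$ is generated in degree one, this subalgebra must be everything, which gives both that there are no further classes and that $n\ge d$.

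A refined count would also do it. Let $j_0>n$ be minimal with $V^{j_0}_{t,t}\neq0$. Degree $j_0$ forces $\dim V^{j_0}_{t,t}=\binom d{j_0}>0$. Degree $j_0+1$ then forces $\dim V^{j_0+1}_{t,t}=\binom d{j_0+1}-d\binom d{j_0}<0$, a contradiction. This uses $n\ge2$ so that the cross term $V^{j_0}\otimes V^1$ appears.

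\textbf{Off the diagonal.} Upper semicontinuity of $\hom^i$ at diagonal points only gives an open neighbourhood $W$ of $\Delta$ on which $\hom^i(\cU_s,\cU_t)=0$ for all odd $i\neq1$. Here $W$ is an intersection of finitely many such neighbourhoods, by \autoref{lem:Eulerconst}(i). This says nothing about pairs $(s,t)$ far from $\Delta$. There is no other locus where you know the fibres, so ``applying the same argument symmetrically'' yields nothing new.

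The missing idea is to use semicontinuity of $\hom^1$ in the other direction. Assume $d\ge1$; otherwise there is nothing to prove. On the non-empty open, hence dense, set $W\setminus\Delta\subset M\times M$, the space $V_{s,t}$ is concentrated in degree $1$ and has dimension $d-1$. So the closed set $\{\hom^1\ge d-1\}$ contains $W\setminus\Delta$ and is therefore all of $M\times M$. Combined with $\dim V_{s,t}=d-1$ for $s\neq t$, this forces $V_{s,t}\cong\kk^{d-1}[-1]$ for every $s\neq t$.
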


From now on, assume that $I_n$ is fully faithful for some $n\ge 2$, which means that \ref{i} holds. In the remainder of this section, we will prove that $n\ge d$ and that condition \eqref{eq:exactcond} is fulfilled. Together with \autoref{thm:mainintro}, which we already proved in \autoref{sect:sufficient}, this gives \autoref{thm:exactcriterion}.

As in \autoref{sect:sufficient}, we set $\cW:=\cU^{\lhd n}$.

\begin{lemma}\label{lem:selfHom}
For every $t\in M$, we have $\Hom^*(\cU_t, \cU_t)\cong \kk[0]\oplus \kk^d[-1]$.
Furthermore, $n\ge d$ holds.
\end{lemma}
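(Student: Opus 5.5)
We will use full faithfulness only through the skyscraper sheaf at a single point. Fix $t\in M$ and put $V:=\Hom^*(\cU_t,\cU_t)$. Since $I_n$ is fully faithful and $I_n(\reg_t)\cong\cW_t\cong(\cU_t)^{\boxtimes n}$ by \eqref{eq:boxfibre}, we get
\[
\Hom^*_{\sym_n}(\cW_t,\cW_t)\cong\Hom^*(\reg_t,\reg_t)\cong\bigoplus_{i=0}^d\wedge^i(\kk^{\oplus d})[-i].
\]
The symmetric K\"unneth formula \autoref{lem:symKun} identifies the left-hand side with $S^nV$. So everything reduces to a statement about graded vector spaces: if $V\ni\id$ and $S^nV\cong\bigoplus_i\wedge^i\kk^d[-i]$ for some $n\ge 2$, then $V\cong\kk[0]\oplus\kk^d[-1]$ and $n\ge d$.

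First we locate $V$ in degrees. Since $\id\neq0$, we can split $V=\kk\cdot\id\oplus V'$ as graded spaces. This gives the decomposition
\[
S^nV\cong\bigoplus_{j=0}^n S^{n-j}(\kk\cdot\id)\otimes S^jV'\cong\bigoplus_{j=0}^n S^jV'.
\]
In particular $V\cong S^0V'\oplus S^1V'$ is a graded direct summand of $S^nV$. Hence $V$ is concentrated in degrees $[0,d]$. Comparing degree-$0$ parts gives $V^0=\kk\cdot\id$, so $V'=\bigoplus_{i\ge1}V^i$ sits in positive degrees.

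Next we compare degree by degree.
\begin{itemize}
\item In degree $1$, the only contribution is $S^1V'$, so $\dim V^1=d$.
\item For $2\le i\le n$, the degree-$i$ part of $S^nV$ contains $S^i(V^1[-1])\cong\wedge^iV^1$, of dimension $\binom di$, together with the summand $V^i\subset S^1V'$. Matching with $\binom di$ forces $V^i=0$ for $2\le i\le n$.
\end{itemize}

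If $n\ge d$, we are done: $V$ then lives in degrees $[0,d]\subset[0,n]$, so $V=\kk\oplus V^1$, which is the claim.

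So assume $n<d$ for a contradiction; this is the main step. In degree $n+1$, the contributions come from $S^jV'$ with $j\le n$, where $V'$ has pieces only in degree $1$ and in degrees $\ge n+1$. A product of $j\ge2$ factors of degree $n+1$ would need one factor of degree $\ge n+1$ and another of degree $\ge1$, which overshoots. Hence the degree-$(n+1)$ part is just $V^{n+1}$, and $\dim V^{n+1}=\binom d{n+1}>0$.

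In degree $n+2$, the space $S^2V'$ (available since $n\ge2$) contains $V^1\otimes V^{n+1}$. This is a plain tensor product, because the two factors sit in different degrees, so graded symmetry imposes no relation. It has dimension $d\binom d{n+1}$. However, the degree-$(n+2)$ part of $S^nV$ must have dimension
\[
\binom d{n+2}=\binom d{n+1}\frac{d-n-1}{n+2}<d\binom d{n+1},
\]
which is a contradiction. Hence $n\ge d$, and by the previous paragraph $V\cong\kk[0]\oplus\kk^d[-1]$. The only delicate point is this bookkeeping of which products can land in degrees $n+1$ and $n+2$, keeping track of the graded (exterior) symmetrisation of odd-degree pieces as in \autoref{rem:graded}.
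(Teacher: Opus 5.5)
Your proof is correct, but it takes a genuinely different route from the paper in the decisive step.

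\textbf{Shared first part.} Both arguments use that $\Hom^*(\cU_t,\cU_t)$ embeds into $S^n\Hom^*(\cU_t,\cU_t)$; the paper does this via $\gamma\cdot\id^{n-1}$. This gives non-negativity of degrees, $\Hom^0\cong\kk$ and $\Hom^1\cong\kk^d$. The paper also kills the degree-$2$ part by the same dimension count you use.

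\textbf{The paper's final step.} From there the paper switches to multiplicative structure. Since the degree-$2$ part vanishes, products of degree-$1$ classes are zero. Hence the subalgebra of $S^n\Hom^*(\cU_t,\cU_t)$ generated in degree $1$ is $S^n\bigl(\kk[0]\oplus V^1[-1]\bigr)\cong\bigoplus_{i\le\min(n,d)}\wedge^iV^1[-i]$. The chain $\wedge^*T_M(t)\cong\Hom^*(\reg_t,\reg_t)\cong S^n\Hom^*(\cU_t,\cU_t)$ is an isomorphism of graded algebras, and $\wedge^*T_M(t)$ is generated in degree one. So this subalgebra is everything, which gives $n\ge d$ and the vanishing of all higher-degree parts at once.

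\textbf{Your final step.} You use only the graded dimensions $\binom di$ of $\Hom^*(\reg_t,\reg_t)$. You kill $V^i$ for all $2\le i\le n$ by counting. If $n<d$, the degree-$(n+1)$ count forces $\dim V^{n+1}=\binom d{n+1}>0$. The cross term $V^1\otimes V^{n+1}\subset S^2V'$ then exceeds $\binom d{n+2}$ in degree $n+2$, a contradiction.

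\textbf{Comparison.} Your route is more elementary. It needs neither the compatibility of $I_n$ and of the symmetric K\"unneth isomorphism with Yoneda products, nor the generation of $\wedge^*T_M(t)$ in degree one. The paper's argument is shorter and explains conceptually why $n\ge d$ is forced.

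\textbf{Small points.} The claim $\id\neq0$ needs the remark that $\cU_t\neq0$, which follows from $S^nV\neq0$. In the degree-$(n+1)$ step you should say ``products of \emph{total} degree $n+1$''. You should also state explicitly that products of only degree-one factors are excluded because they have degree $j\le n$.
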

\begin{proof}
By full faithfulness of $I_n$ together with \eqref{eq:boxfibre} and \autoref{lem:symKun},
\begin{equation}\label{eq:symHom}
\wedge^* T_M(t)\cong  \Hom^*(\reg_t,\reg_t)\cong \Hom^*_{\sym_n}(\cW_t,\cW_t)\cong S^n\Hom^*(\cU_t,\cU_t)\,.
\end{equation}
In particular, $\Hom^*(\cU_t,\cU_t)$ does not have a non-vanishing part of negative degree. Indeed, assume that there was some homogeneous $0\neq \gamma\in\Hom^*(\cU_t,\cU_t)$ of negative degree. Then also $\gamma\cdot \id^{n-1}\in S^n\Hom^*(\cU_t,\cU_t)$ would be non-zero and of negative degree, contradicting \eqref{eq:symHom}.

Now, because of the vanishing of $\Hom^*(\cU_t,\cU_t)$ in negative degrees, evaluating \eqref{eq:symHom} in degree zero gives $\kk\cong \Hom^0(\reg_t,\reg_t)\cong S^n\Hom^0(\cU_t,\cU_t)$. This implies
\[
 \Hom^0(\cU_t,\cU_t)\cong \kk\,.
\]
Using this information to evaluate \eqref{eq:symHom} in degree 1 then gives
\[
\kk^d \cong T_M(t)\cong  S^{n-1}\bigl(\Hom^0(\cU_t,\cU_t)\bigr)\otimes \Hom^1(\cU_t,\cU_t)\cong  \kk\otimes \Hom^1(\cU_t,\cU_t)\cong \Hom^1(\cU_t,\cU_t)\,.
\]
Now, let us write
\begin{equation}\label{eq:Udecomp}\Hom^*(\cU_t,\cU_t)\cong \kk[0]\oplus V[-1]\oplus R\,,\end{equation}
where $V\cong T_M(t)\cong \kk^d$, and $R$ is a graded vector space concentrated in degrees other than $0$ and $1$.
We consider the graded subspace
\begin{equation}\label{eq:subspace}
\bigoplus_{i=0}^{\min(n,d)} \wedge^iV[-i]\cong  S^n(\kk[0]\oplus V[-1])\subset S^n\Hom^*(\cU_t,\cU_t)\cong \wedge^*V
\end{equation}
where the first isomorphism comes from \autoref{rem:graded}, and the last isomorphism is \eqref{eq:symHom} again. Since $n\ge 2$, this inclusion is an equality in degree 2. Hence, $R$ is zero in degree 2, because otherwise we would have an additional contribution to the right-hand side of \eqref{eq:subspace}. It follows that the multiplication of two degree 1 elements in $\Hom^*(\cU_t,\cU_t)$ is always zero. Hence, the left-hand side of \eqref{eq:subspace} agrees with the subalgebra generated by the degree 1 part of
$S^n\Hom^*(\cU_t,\cU_t)$.

Note that \eqref{eq:symHom} is a chain of isomorphisms of graded algebras. In particular, the algebra $S^n\Hom^*(\cU_t,\cU_t)$ is generated in degree one.
Hence, the inclusion \eqref{eq:subspace} must be an equality.
This proves both assertions, that $n\ge d$, and that $R=0$ in \eqref{eq:Udecomp}.
\end{proof}

It is only left to prove that $\Hom^*(\cU_s,\cU_t)\cong \kk^{\oplus d-1}[-1]$ for $s\neq t$.

\begin{lemma}\label{lem:odd}
For $s,t\in M$ with $s\neq t$, the graded vector space $\Hom^*(\cU_s,\cU_t)$ is concentrated in odd degrees.
\end{lemma}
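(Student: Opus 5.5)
We use full faithfulness of $I_n$ for some $n\ge 2$, which gives $\Hom^*_{\sym_n}(\cW_s,\cW_t)\cong\Hom^*(\reg_s,\reg_t)=0$ for $s\neq t$. By \eqref{eq:boxfibre} and the symmetric K\"unneth formula \autoref{lem:symKun}, this reads
\[
S^n\Hom^*(\cU_s,\cU_t)\cong 0\,.
\]
Write $V:=\Hom^*(\cU_s,\cU_t)=V^{\mathsf{even}}\oplus V^{\mathsf{odd}}$ as in \autoref{rem:graded}. Then $S^nV\cong\bigoplus_{i=0}^n S^{n-i}V^{\mathsf{even}}\otimes S^iV^{\mathsf{odd}}$, and in particular the summand $S^nV^{\mathsf{even}}$ (the $i=0$ term) vanishes. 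The underlying ungraded space of $S^nV^{\mathsf{even}}$ is the honest symmetric power of a vector space. In characteristic zero this is nonzero as soon as $V^{\mathsf{even}}\neq 0$: for a nonzero homogeneous $v$, the element $v^n$ is nonzero. Hence $V^{\mathsf{even}}=0$, which is exactly the claim that $\Hom^*(\cU_s,\cU_t)$ is concentrated in odd degrees.

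The argument is direct and short. The only points needing care are the following.
\begin{enumerate}
\item The graded-sign conventions: the summands $S^{n-i}V^{\mathsf{even}}\otimes S^iV^{\mathsf{odd}}$ sit in possibly different degrees, but they are direct summands of $S^nV$. So the vanishing of the total graded space forces each summand to vanish; no cancellation can occur, since these are vector spaces rather than Euler characteristics.
\item The use of $\mathrm{char}\,\kk=0$, which guarantees that $S^n$ of a nonzero ordinary vector space is nonzero. This holds in any characteristic for the symmetric power itself, but the identification of the $\sym_n$-invariants with $S^n$ uses characteristic zero, and that is already built into \autoref{lem:symKun}.
\end{enumerate}

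I expect no real obstacle here. The main subtlety is simply to remember that $I_n(\reg_t)\cong\cW_t$ with the \emph{canonical} linearisation, so that \autoref{lem:symKun} applies verbatim for $s\neq t$ as well as for $s=t$.
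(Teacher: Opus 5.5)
Your proposal is correct and is essentially the paper's own argument: full faithfulness gives $S^n\Hom^*(\cU_s,\cU_t)\cong\Hom^*(\reg_s,\reg_t)\cong 0$ via \autoref{lem:symKun}, and the $i=0$ summand $S^nV^{\mathsf{even}}$, an honest symmetric power, can only vanish if $V^{\mathsf{even}}=0$. Your remarks on direct summands and on the characteristic are accurate, though not needed beyond what the paper already uses.
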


\begin{proof}
 By full faithfulness of $I_n$, we have $\Hom^*_{\sym_n}(\cW_s, \cW_t)\cong \Hom^*(\reg_s,\reg_t)\cong 0$. On the other hand, writing $V:=\Hom^*(\cU_s,\cU_t)=V^{\mathsf{odd}}\oplus V^{\mathsf{even}}$ as in \autoref{rem:graded}, we have
 \[
 0\cong \Hom^*_{\sym_n}(\cW_s, \cW_t)\cong S^nV\cong \bigoplus_{i=0}^n S^{n-i} V^{\mathsf{even}}\otimes S^{i} V^{\mathsf{odd}}\,.
 \]
Hence, all the summands on the right-hand side need to vanish. Because $S^nV^{\mathsf{even}}$ is the symmetric power for the underlying non-graded vector space, the $i=0$ summand can only vanish if $V^{\mathsf{even}}\cong 0$.
 \end{proof}

\begin{proof}[Finishing the proof of \autoref{thm:exactcriterion}]
It is left to show that $\Hom^*(\cU_s,\cU_t)\cong \kk^{\oplus d-1}[-1]$ for all $(s,t)\in M\times M$ with $s\neq t$.
As in the proof of \autoref{prop:surfcondition}, we consider
\[
 \cA:=\pr_{13*}\sHom(\pr_{12}^* \cU, \pr_{23}^*\tau^* \cU)\,\in \D(M\times M)
\]
where $\tau\colon X\times M\to M\times X$ is the morphism which switches the factors, and $\pr_{ij}$ denotes the projection from $M\times X\times M$ to pairs of factors.
Again,
\begin{equation}\label{eq:Afibres}
 \iota_{s,t}^*\cA\cong \Hom^*(\cU_s,\cU_t)
\end{equation}
for the embedding $\iota_{s,t}\colon\pt\to M\times M$ of any point $(s,t)\in M\times M$. By \autoref{lem:selfHom}, we have $\chi\bigl(\Hom^*(\cU_t,\cU_t)\bigr)=1-d$. By \eqref{eq:Afibres} together with \autoref{lem:Eulerconst}, then also
\[\chi\bigl(\Hom^*(\cU_s,\cU_t)\bigr)=1-d\,.\] Since $\Hom^*(\cU_s,\cU_t)$ is concentrated in odd degrees by \autoref{lem:odd}, this gives
\begin{equation}\label{eq:homdim}
 \dim \Hom^*(\cU_s,\cU_t)=d-1 \quad \text{for all $(s,t)\in M\times M$ with $s\neq t$.}
\end{equation}
By semi-continuity \cite[\href{https://stacks.math.columbia.edu/tag/0BDI}{Lemma 0BDI}]{stacks-project}, for every $i\in \IZ$, the map
\[
 M\times M\to \IZ\quad,\quad (s,t)\mapsto \mathsf{h}^i(\iota_{s,t}^*\cA)=\hom^i(\cU_s,\cU_t)
\]
is upper semi-continuous. Since we know the vanishing of $\hom^i(\cU_t,\cU_t)$ for every $t\in M$ and every odd $i\neq 1$ by \autoref{lem:selfHom}, we get open neighbourhoods $W_i$ of the diagonal in $M\times M$ such that $\hom^i(\cU_s,\cU_t)=0$ for all $(s,t)\in W_i$. By \autoref{lem:Eulerconst}(i), all but finitely many $W_i$ can be chosen as the whole $M\times M$. Hence, the intersection
\[W:=\bigcap_{\substack{i\text{ odd}\\i\neq 1}} W_i\] is still a non-empty open subset of $M\times M$. By \eqref{eq:homdim}, we have $\hom^1(\cU_s,\cU_t)=d-1$ for all $(s,t)\in W$ with $s\neq t$. Hence, by semi-continuity for $i=1$, we get that
\[
 \hom^1(\cU_s,\cU_t)\ge d-1\quad\text{for all $(s,t)\in M\times M$.}
\]
By \eqref{eq:homdim} this inequality must be an equality for $s\neq t$. Again by \eqref{eq:homdim}, this implies the vanishing of $\Hom^i(\cU_s,\cU_t)$ for all $i\neq 1$ and all $(s,t)\in M\times M$ with $s\neq t$.
\end{proof}

\bibliographystyle{alpha}
\addcontentsline{toc}{chapter}{References}
\bibliography{references}

\end{document}